\newcommand{\de}{\mathrm{d}}
\newcommand{\indiq}{1\!\! 1}
\newtheorem{assumption}{Assumption}
\newcommand{\norm}[1]{\left\|#1\right\|}
\newcommand{\abs}[1]{\left|#1\right|}
\newcommand{\ip}[2]{\left(#1\right)\cdot\left(#2\right)}
\newcommand{\E}{\mathbb{E}}
\newcommand{\e}{m}
\newcommand{\mass}{m}
\newcommand{\friction}{\alpha}
\newcommand{\R}{\mathbb{R}}
\begin{document}

\title{Losing Momentum in Continuous-time Stochastic Optimisation}

\author{\name Kexin Jin \email kexinj@math.princeton.edu \\
       \addr Department of Mathematics\\
       Princeton University\\
       Princeton, NJ 08544-1000, USA
       \AND
        \name Jonas Latz \email jonas.latz@manchester.ac.uk \\
       \addr Department of Mathematics \\
       The University of Manchester\\
       Manchester, M13 9PL, United Kingdom
       \AND
       \name Chenguang Liu \email C.Liu-13@tudelft.nl \\
       \addr Delft Institute of Applied Mathematics\\
       Technische Universiteit Delft\\
       2628 Delft, The Netherlands
       \AND
       \name Alessandro Scagliotti \email scag@ma.tum.de \\
       \addr  CIT School, Technische Universit\"at M\"unchen, and \\
       Munich Center for Machine Learning (MCML)\\
       85748  Garching bei M\"unchen, Germany
       }

\editor{My editor}

\maketitle

\begin{abstract}
The training of modern machine learning models often consists in solving high-dimensional non-convex optimisation problems that are subject to large-scale data.
In this context, momentum-based stochastic optimisation algorithms have become particularly widespread. The stochasticity arises from data subsampling which reduces computational cost. Both, momentum and stochasticity  help the algorithm to 
converge globally. 
In this work, we propose and analyse a continuous-time model for stochastic gradient descent with momentum. This model is a piecewise-deterministic Markov process that represents the optimiser by an underdamped dynamical system and the data subsampling through a stochastic switching. 
We investigate longtime limits, the subsampling-to-no-subsampling limit, and the momentum-to-no-momentum limit. We are particularly interested in the case of reducing the momentum over time. 
Under convexity assumptions, we show convergence of our dynamical system to the global minimiser when reducing  momentum over time and letting the subsampling rate go to infinity.
We then propose a stable, symplectic discretisation scheme to construct an algorithm from our continuous-time dynamical system. In experiments, we study our scheme in convex and non-convex test problems. Additionally, we train a convolutional neural network in an image classification problem. Our algorithm  {attains} competitive results compared to stochastic gradient descent with momentum.
\end{abstract}

\begin{keywords}
  stochastic optimisation, momentum-based optimisation, piecewise-deterministic Markov processes, stochastic stability, deep learning
\end{keywords}

\section{Introduction}
Machine learning and artificial intelligence play a fundamental role in modern scientific research and modern life. In many instances, the underlying learning process consists of solving a high-dimensional non-convex optimisation problem with respect to large-scale data. These problems have been approached by  applicants and researchers using a large range of different algorithms. Methods are based on, e.g.\ stochastic approximation, statistical mechanics, and ideas from biological evolution. Many of these methods deviate from simply solving the optimisation problem, but are actually also used for regularisation or approximate  uncertainty quantification. Unfortunately, many successfully employed methods are theoretically {hardly} understood.

In this article, we investigate momentum-based stochastic optimisation methods in a contin\-uous-time framework. We now introduce the optimisation problem, motivate stochastic and momen\-tum-based optimisation, review past work, and summarise our contributions { and main results}.

\subsection{Optimisation and continuous dynamics}
We study  optimisation problems of the form 
\begin{equation} \label{eq:optprob}
    \min_{\theta \in X} \bar{\Phi}(\theta),
\end{equation}
on the space $X := \mathbb{R}^K$, where
$$\bar\Phi(\theta) := \frac{1}{N}\sum_{i=1}^N \Phi_i(\theta). $$
We denote the number of terms {by} $N \in \mathbb{N}:=\{1,2,\ldots\}$
 and assume that the functions $\Phi_i$, with $i \in I:=\{1,\ldots,N\}$, are continuously differentiable with Lipschitz gradients. 
 We assume that \eqref{eq:optprob} is well-defined and denote $\theta^* :\in \mathrm{argmin}\,\bar\Phi$. In this work, we study continuous{-time} dynamical systems that can be used to optimise functions of type $\bar\Phi$ and to represent the dynamics of optimisation algorithms. The most basic of these dynamical systems is the \emph{gradient flow}
\begin{equation} \label{eq_gradient_flow_full}
    \frac{\mathrm{d}\zeta_t}{\mathrm{dt}} = -\nabla \bar\Phi(\zeta_t), \qquad \zeta_0  \in X,
\end{equation}
a continuous-time version of the \emph{gradient descent method}.
The ODE solution $(\zeta(t))_{t \geq 0}$ converges to a stationary point of $\bar\Phi$ under{, e.g.\ certain} convexity conditions. In practice, especially in modern machine learning, we often encounter optimisation problems that are non-convex{, where the solution} $(\zeta(t))_{t \geq 0}$ may converge to a saddle point or a local minimiser. When discretising  \eqref{eq_gradient_flow_full}, the iterates actually only converge to (local) minimisers, see, e.g.
\cite{LeeSimJord, PanPil}.
However, as observed in \cite{DuJin}, the {algorithm} could take a lot of iterations to escape a saddle point. 

The \emph{stochastic gradient descent (SGD)} method going back to \cite{RobbinsMonro} has been a popular alternative to the {gradient descent method}. 
In SGD, we randomly select one of the $\Phi_i$ $(i \in I)$ and optimise with respect to that function before switching to another $\Phi_j$ $(j \in I\backslash\{i\})$. {This process is repeated until a stationary state is reached.} A continuous-time model for stochastic gradient descent has recently proposed by \cite{Jonas} and extended by \cite{Jin} and \cite{Jonas2}.
There, stochastic gradient descent is represented through the dynamical system
\begin{equation} \label{eq:SGP_old}
 \frac{\mathrm{d}\theta_t}{\mathrm{dt}} = -\nabla \Phi_{i(t)}(\theta_t), \qquad  \theta_0 \in X,   
\end{equation}
where the \emph{index process} $(i(t))_{t \geq 0}$ is a suitable homogeneous continuous-time Markov process on $I$. The processes $(i(t))_{t \geq 0}$, $(\theta_t)_{t \geq 0}$ and any other stochastic processes and random variables  throughout this work are defined on the underlying probability space $(\Omega, \mathcal{F}, \mathbb{P})$. The process $(\theta_t)_{t \geq 0}$  represents a randomised gradient flow  whose potential is randomly replaced by another one when $(i(t))_{t \geq 0}$ jumps from one state to another. Both $(\theta_t)_{t \geq 0}$ and $(i(t), \theta_t)_{t \geq 0}$ are called \emph{stochastic gradient process}. Since $\bar\Phi$ often represents the averaging over data subsets, we refer to the process of replacing $\bar\Phi$ by a $\Phi_i$ as \emph{subsampling}.

{SGD has two main advantages:} 
Since we consider only one of the $\nabla \Phi_i$ at a time, we significantly reduce the computational cost when discretising the dynamical system.
Moreover, the perturbation through the randomised sampling in stochastic gradient descent can help to overcome saddle points, see \cite{DanKoh, JinNet}. However, as the method is purely gradient-based, we would expect that the escaping saddle points is still slow.
{Saddle points and local minimisers can sometimes be escaped when using momentum-based optimisation methods.} 
{Here, we replace the gradient flow \eqref{eq_gradient_flow_full}, by the \emph{underdamped gradient flow} given through 
\begin{equation}\label{eq:polyak_ode}
    m\frac{\de^2 q_t}{\de t^2} -\alpha \frac{\de q_t}{\de t} + \nabla \bar\Phi(q_t) =0, \quad \frac{\de q_0}{\de t} \in X, \quad q_0 \in X.
\end{equation}
This dynamical system describes the motion of a ball with mass $m > 0$ constrained to slide on the graph $\bar\Phi$ subject to a constant gravitational field and viscosity friction $\alpha > 0$, see \cite{attouch2000heavy-2} for details.}
Intuitively, momentum is used to vault the optimiser out of saddle points and local minimisers. 


\subsection{Momentum} \label{subsec:mot_exampl}
{We now give a motivating example where the relevance of momentum in non-convex optimisation can be observed and then summarise relevant literature on momentum-based optimisation. The example is based on }
a simple one-dimen\-sional
non-smooth non-convex optimisation problem
inspired by the training of neural networks. 
As we shall see, the introduction of momentum 
can prevent the convergence of the gradient
flow to a stationary point that is not the global 
minimiser. We later work in a framework that does not actually cover this introductory example, we present it only to give an intuition for the methodology.

Let us consider the function
$\Phi:\R\to\R$ defined as 
\begin{equation} \label{eq:1_dim_ex}
\Phi(x) := (\mathrm{ReLU}(x)-1)^2+x^2
= \begin{cases}
x^2+1 &\mbox{if } x\leq 0\\
2x^2-2x+1 &\mbox{if } x> 0
\end{cases} \qquad (x \in \mathbb{R})
\end{equation}
where $\mathrm{ReLU}(\cdot) := \max\{0,\cdot\}$. 
\begin{figure}[htb]
    \centering
    \includegraphics[scale = 0.6]{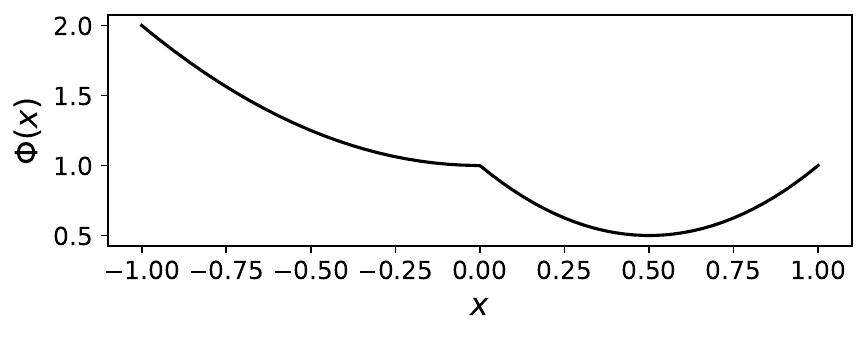}
        \vspace{-3mm}
    \caption{{Plot of the potential $\Phi$.}}
    \label{figure:phi}
\end{figure}
As shown in Figure \ref{figure:phi}, the function $\Phi$ attains its global
minimum at the point $x^* = \frac12$.
However, $\tilde x=0$
is the minimiser of 
$\Phi$ restricted to the 
non-positive half-line $(-\infty, 0]$. Therefore, 
any solution of the gradient flow equation \eqref{eq_gradient_flow_full}
with potential $\bar\Phi := \Phi$ and
{initial value} $\zeta_0<0$ converges to the
{saddle point} $\tilde x$.
In order to avoid the convergence to
$\tilde x$, we can replace the 
gradient flow 
by the associated underdamped gradient flow \eqref{eq:polyak_ode} with mass $m > 0$, viscosity friction $\alpha > 0$, and potential $\bar\Phi$.
If the initial value $q_0$ 
is negative,
then the equation of the motion reduces
to 
\begin{equation} \label{eq:dyn_sys_ex_2}
\mass \frac{\mathrm{d}^2q_t}{\mathrm{d}t^2} + 2q_t = -\alpha \frac{\mathrm{d}q_t}{\mathrm{d}t},
\end{equation}
whenever the dynamical system stays
in the negative half-line.
At this point the natural question is 
whether there exist combinations of
$\mass$ and $\friction$ such that,
assuming that
$q_0<0$, the particle is not confined
in the negative half-line for every
time $t > 0$.
Using elementary theory of linear
second-order differential equations, it turns 
out that the particle manages to overcome the
``false" minimiser $\tilde x = 0$ 
if the following relation is satisfied:
\begin{equation}\label{eq:rel_escape_ex}
    \friction^2-8\mass< 0,
\end{equation}
i.e.\  the friction $\friction$ is sufficiently small or the mass $\mass$ sufficiently large. In summary, momentum can be used to overcome stationary points of the target function.

The use of momentum in optimisation dates back to the 1960s, when Polyak formulated the \emph{heavy ball method} in convex optimisation, see \cite{Pol63,Pol64}. In this context, the aim was to accelerate the convergence of the gradient descent method. {Later, Nesterov  proposed a momentum-based class of discrete accelerated methods for convex problems in his seminal paper \cite{N83}.}
{ Momentum-based optimisation methods have been investigated thoroughly from the perspective of continuous-time dynamical systems. In-depth studies about the mechanical systems underlying Polyak's method can be found in \cite{attouch2000heavy-2} -- a work which has sparked considerable interest in the community. 
We recall, for instance, \cite{attouch2000heavy} which have studied constrained convex optimisation.
In \cite{SBC}, the authors have formulated a dissipative second order ODE that provides a continuous-time counterpart to the Nesterov method. Here, the viscosity friction parameter $\alpha$ is time-dependent and vanishing as $t\to\infty$. This aspect was later further investigated in \cite{ACPR,ShiJord19,ShiJord21,S22}. In \cite{attouch2019scaling-damped, attouch2019scaling-damped-2}, the authors combine vanishing viscosity with time-scaling of trajectories, leading to a dynamics similar to \eqref{eq:polyak_ode}, but with an extra time-dependent coefficient $\beta(t)$ appearing in front of the term $\nabla \bar\Phi$, i.e.
\begin{equation}\label{eq:polyak_scaling}
    \frac{\de^2}{\de t^2}\theta(t) -\alpha(t) \frac{\de}{\de t}\theta (t) + \beta(t) \nabla \bar\Phi(\theta(t)) =0.
\end{equation}
The parameter $\beta$ satisfies $\beta(t)\to\infty$ as $t\to\infty$. 

Finally, we mention more recent advances on second-order dynamics with time scaling that include Hessian-driven damping \cite{attouch2020hessian-rescaling} and averaging \cite{attouch2022scale-averaging}.
}
Next, we see how momentum is used within stochastic optimisation practice.

\subsection{Momentum-based stochastic optimisation and the Adam algorithm}
The most basic momentum-based stochastic optimisation method  combines a {symplectic} Euler discretisation of the underdamped gradient flow \eqref{eq:polyak_ode} with subsampling. This \emph{stochastic gradient descent method with classical momentum} is given by
\begin{equation} \label{eq_SGD_momen}
\begin{cases}
v_n = \alpha v_{n-1} - \eta \nabla \Phi_{i_n}(\theta_{n-1}),\\
\theta_n = \theta_{n-1} + { v_{n}},
\end{cases}
\end{equation}
where $\alpha, \eta > 0$ are hyperparameters and $i_1, i_2,\ldots \sim \mathrm{Unif}(I)$ are independent and identically distributed (i.i.d.) random variables. We refer to $\eta$ as learning rate and refer to \eqref{eq_SGD_momen} just as \emph{classical momentum}. {The connection to the underdamped gradient flow is more obvious when considering the timestep size to be equal to $1$, $\eta$ to be a scaling factor of the potential, and $\alpha$ to be  a parameter that incorporates the velocity at the previous timestep and the friction. Classical momentum has been studied by \cite{Rumelhart1986,pmlr-v28-sutskever13}.}  In deep learning, more advanced momentum-based descent methods have been widely in use, including Adagrad \citep{DHS}; Adadelta \citep{ADADELTA}; RMSprop \citep{RMSprop}; Adam \citep{Adam}, etc. Adam is one of the most commonly used optimisers when training neural networks. The updating rule is the following,
\begin{equation} \label{eq_ADAM}
\begin{cases}
u_n = \beta_1 u_{n-1} + (1-\beta_1)\nabla \Phi_{i_n}(\theta_{n-1}),\\
v_n = \beta_2 v_{n-1} + (1-\beta_2) \big[\nabla \Phi_{i_n}(\theta_{n-1})\big]^2,\\
\theta_n = \theta_{n-1} - \alpha \frac{u_n/(1-\beta_1^n)}{\sqrt{v_n/(1-\beta_2^n)} + \beta_3},
\end{cases}
\end{equation}
where $\beta_1$, $\beta_2$, and $\alpha$ are hyperparameters, $\beta_3$ is a small constant, and $i_1, i_2,\ldots \sim \mathrm{Unif}(I)$ (i.i.d.). Comparing to the classical momentum method, Adam uses not only an estimate of the first moment of the gradient but also an estimate of the second moment as the part of the adaptive learning rate.  
The variable $u_n$ is the biased first moment, similar to the one used in the classical momentum method. The variable $v_n$ is the biased second moment estimated using the gradient squared. When updating $\theta_n$, Adam normalises $u_n$ and $v_n$ so that they become unbiased estimators. Moreover, it was shown  
in \cite{AdamProof} that Adam converges in the sense of \cite{Zinkevich} with speed $O(\log(t)/\sqrt{t}; t \rightarrow \infty)$.

While momentum-based stochastic optimisation methods are popular in machine learning practice, they are overall rather badly understood; see the discussion in \cite{Liu}. In the present work, we analyse a continuous-time stochastic gradient process with momentum and propose an efficient discretisation technique for this dynamical system. Thus, we improve the understanding of momentum-based stochastic optimisation in a theoretical framework and machine learning practice. Our analysis complements the Langevin-based analyses by \cite{Li2019, shi2022hyperparameters}{, where the authors represent SGD in continuous time as a diffusion process.} Before we summarise our contributions more precisely, we introduce our continuous-time stochastic gradient-momentum process.

\subsection{The stochastic gradient-momentum process}
Throughout this work, we discuss a continuous-time dynamical system that represents stochastic gradient descent with momentum -- the stochastic gradient-momentum process. Due to the continuous-time nature of the system, it can equally represent classical momentum \eqref{eq_SGD_momen} and Adam \eqref{eq_ADAM} (although not the adaptive nature of the latter). 
We obtain this system by combining the underdamped dynamical system \eqref{eq:polyak_ode} with the stochastic gradient process \eqref{eq:SGP_old}. 
We now introduce the \emph{index process}, the process that controls the subsampling.
\begin{definition}[Index process]\label{index}
Let $(i(t))_{t \geq 0}$ be a continuous-time Markov process on $I$ with transition rate matrix $$\mathbf{A}_N=\mathbf{\Gamma}_N-N\gamma  \mathbf{I}_{N}.$$
Here, $\gamma>0$ {denotes the jump rate}, $\mathbf{\Gamma}_N$ is an $N\times N$ matrix whose entries are all equal to $\gamma$, and $\mathbf{I}_{N}$ is the identity matrix. We assume that the initial distribution $\mathbb{P}(i(0) \in \cdot)$ is the uniform distribution on $I$. 
Moreover, when given a value $\nu > 0$ or an appropriate, strictly increasing function $\beta: [0, \infty) \rightarrow [0, \infty)$, we define the rescaled index processes $i^\nu(t) := i(t/ \nu)$ and $i^\beta(t) := i(\beta^{-1}(t))$, for $t \geq 0$. We refer to each of $(i(t))_{t \geq 0}$, $(i^\nu(t))_{t \geq 0}$, and $(i^\beta(t))_{t \geq 0}$ as \emph{index process}.
\end{definition}

The function $\beta^{-1}$ and the scalar $1/\nu$ play the role of a time-dependent and constant learning rate, respectively; see \cite{Jin}. Thus, we sometimes refer to them as learning rate.
Next, we make the following regularity assumptions.
\begin{assumption}\label{asSGPf} Let $\Phi_i \in \mathcal{C}^1(X,\R)$, i.e.\  it is continuously differentiable, and $\nabla_x \Phi_i$ be Lipschitz with Lipschitz constant $L >0$, for $i \in I$. 
\end{assumption}
Throughout this work, we study multiple versions of the stochastic gradient-momentum process and introduce them  in Definitions~\ref{def:mSGPC}, \ref{def:DmSGPC}, and \ref{def:DmSGPD}. 
We now introduce the prototype  of the dynamical systems studied in this paper.
Note that we often choose the more compact form $\mathrm{d}z_t = f(z_t) \mathrm{d}t$ to represent $\frac{\mathrm{d}z_t}{\mathrm{d}t} = f(z_t)$. 
The general \emph{stochastic gradient-momentum process} (SGMP) is given by $(p^\dagger_t, q^\dagger_t, i^\beta(t))_{t \geq 0}$ {satisfying}
\begin{equation}\label{eq_gen_SGM}
\left\{ \begin{array}{rl} 
\mathrm{d}q_t^\dagger &= p_t^\dagger \mathrm{d}t,\\
\e(t) \mathrm{d}p_t^\dagger &= - \nabla \Phi_{i^\beta(t)}(q_t^\dagger)\mathrm{d}t- \alpha p_t^\dagger\mathrm{d}t, \\
p^\dagger(t=0) &= p_0,\\ q^\dagger(t=0) &= q_0,
\end{array} \right.
\end{equation}
{where} $(p^\dagger_t)_{t \geq 0}$ describes the velocity of the particle, $(q^\dagger_t)_{t \geq 0}$ its position, $(\e(t))_{t \geq 0}$  its mass (that may depend on time), and $\alpha>0$  the viscocity friction. The function $\beta$ controls the switching of the index process. In the following, we explain the importance behind the functions $\mass$ and $\beta$.

\begin{remark}[Mass $\mass$] The introduction of momentum allows the particle to be vaulted out of stationary points. An effect that is particularly pronounced if $\mass$ is large or $\alpha$ is small. While this behaviour is convenient when escaping local minimisers and saddle points, it can also occur when approaching global minimisers. We give a very simple example in Figure~\ref{figure:losing_momentum}, where we see that the method shows oscillatory behaviour and very slow convergence when the mass $\mass$ is large and constant. As a tuning of the mass $\mass$ is difficult in practice, we suggest an alternative strategy: reducing the mass over time. Rationale: a large mass in the beginning leads to a fast escaping of local minimisers, 
while a small mass later leads to fast convergence once the global minimiser is reached. This intuition is confirmed in Figure~\ref{figure:losing_momentum}.
\end{remark}

\begin{figure}[htb]
    \centering
    \includegraphics[scale = 0.65]{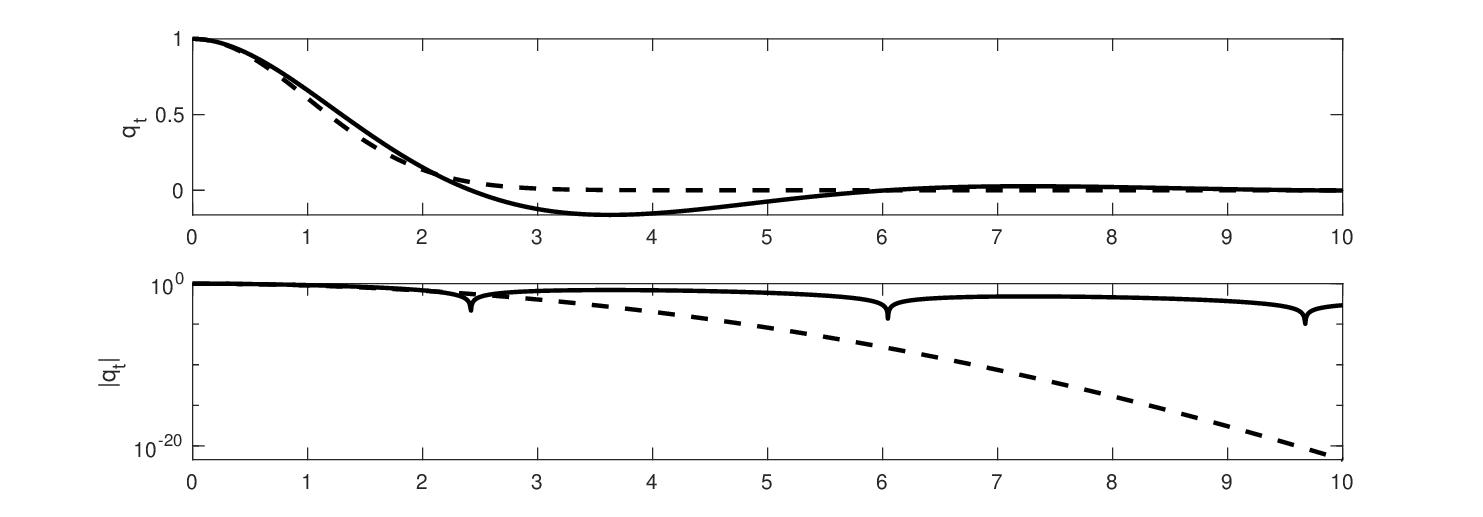}
    \caption{We consider the minimisation of $\bar\Phi(\theta) := \theta^2/2$. Let $\alpha = 1, \e = 1$, $p_0 = 1$, $q_0 = 0$. Then, $q_t = \exp(-t/2)\left( \sin(\sqrt{3}t/2)/\sqrt{3} + \cos(\sqrt{3}t/2)\right)  (t \geq 0)$ (solid line)
oscillates around the solution and converges ultimately to $\theta^* = 0$. If we choose instead $\mass(t) = (1+t)^{-1}$ $(t \geq 0)$ (dashed lines), we have $q_t = \exp(-t^2/2)$, which does not oscillate, but converges very quickly to $\theta^*$.}
    \label{figure:losing_momentum}
\end{figure}
{In \cite{attouch2000heavy-2}, the authors have considered the singular perturbation of \eqref{eq:polyak_ode}, corresponding to letting the mass $m\to 0$, and they established convergence of these dynamics to the gradient flow trajectories. In the present work, we also consider a vanishing mass and carefully bound the Euclidean distance between underdamped gradient flow and gradient flow. Moreover, we let $m$ depend on the time, and investigate the case $m(t)\to 0$ as $t\to\infty$. This setting is conceptually similar to the dynamics studied in  \cite{attouch2019scaling-damped,attouch2019scaling-damped-2} that we give in \eqref{eq:polyak_scaling}. However, $\beta(t) \rightarrow \infty$ as $t \rightarrow \infty$ is not equivalent to the mass going to zero. We can rewrite the underdamped gradient flow to
\begin{equation} \label{eq:our_ode}
    \frac{\de^2}{\de t^2}\theta(t) -\frac{\alpha}{m(t)} \frac{\de}{\de t}\theta (t) +  \underbrace{\frac{1}{m(t)}}_{=\beta(t)} \nabla \Phi(\theta(t)) =0,
\end{equation}
where the factor multiplied by the velocity explodes as well, as $m(t) \rightarrow 0$. Thus, we cannot rely on the results obtained in \cite{attouch2019scaling-damped,attouch2019scaling-damped-2} for the dynamics \eqref{eq:polyak_scaling}.
  Moreover, we shall not actually focus our attention on the deterministic evolution, but we rather investigate a stochastic process related to the ODE \eqref{eq:our_ode}.
However, an interesting similarity between our approach and \cite{attouch2019scaling-damped,attouch2019scaling-damped-2} arises when discretising the respective dynamics. Indeed, the fact that $\beta(t)\to\infty$ in \eqref{eq:polyak_scaling} and $m(t)\to0$ in \eqref{eq:our_ode} make explicit schemes not suitable for deriving discrete approximations. In \cite{attouch2019scaling-damped,attouch2019scaling-damped-2} this issue is addressed by considering a \emph{proximal} discretisation, while in Section~\ref{sec:discr_meth} we opt for a symplectic (semi-implicit) scheme. }

\begin{remark}[Learning rate and $\beta$]
The switching between the potentials leads to a similar effect as the momentum. As the flow approaches the minimisers of different target functions in each step, we are unlikely to converge to a single point. While this helps to escape local minimisers, it prohibits the convergence to a global minimiser. In several stochastic optimisation methods, we need to reduce the learning rate or step-size throughout the algorithm to reduce the time in-between  switches of data sets. We can {attain} this reduction of learning rate by rescaling the time in the index process $(i(t))_{t \geq 0}$ through an appropriate function $\beta$. The rescaling will lead to the waiting times between two switches to decrease over time.
\end{remark}

{We now list and explain more assumptions that are required throughout this work. Indeed, some results of this work require the $(\Phi_i)_{i \in I}$ to be \emph{($\mu$-)strongly convex}, i.e.\  \begin{align*}
    (x-x') \cdot(\nabla \Phi_i(x)-\nabla \Phi_i(x'))\ge \mu\norm{x-x'}^2 \qquad (x, x' \in X, i \in I),
\end{align*} for some strong convexity parameter $\mu >0$. Others do not rely on strong convexity, but still need some restriction on the potential function, as given in the following assumption.} 
{
\begin{assumption}\label{as1.2}
    Let $\Phi_i \in \mathcal{C}^1(X,\R)$ admit a global minimiser $\theta^i_*$, for $i \in I$, and let there be constants $\lambda\in(0, 1/4]$ and $\bar\alpha>0$ such that 
\begin{equation}\label{as:phi}
(x-\theta^i_*)\cdot \nabla \Phi_i(x)/2\ge \lambda (\Phi_i(x)-\Phi_i(\theta^i_*)+\bar\alpha^2\norm{x-\theta^i_*}^2/4),\qquad  (x\in X, i \in I).
\end{equation}
\end{assumption}
We use $\mathcal{A}^{\lambda, \bar\alpha}$ as a short hand to refer to (\ref{as:phi}) with parameters $\lambda$ and $\bar\alpha$.}
{We note that Assumption \ref{as1.2} is also known as the quasi-strong convexity, see, e.g.\ \cite{doi:10.1137/21M1403990, EGZ, Necoara2015LinearCO}.}
    Assumption \ref{as1.2} is implied by strong convexity. We prove this assertion in Lemma~\ref{lem:conx} in the appendix. In the appendix, we also give a counterexample showing that the two assumptions are not equivalent, see Example \ref{example:assp2}.

In stochastic optimisation, it is often necessary to decrease the learning rate over time to obtain a method that converges to the minimiser. In practice, however, this is sometimes purposefully disregarded -- convergence to, e.g.\ a probability distribution is preferred. In stochastic optimisation this leads to an \emph{implicit regularisation} of the optimisation problem. Particularly in machine learning, this implicit regularisation can be necessary to get good generalisation results, see, for example, \cite{Kale, Keskar, Neyshabur, Zhang, Zou}. Thus, we also study this constant learning rate case in the present work. Similarly, whilst we have seen that a decreasing mass can be beneficial, we also study the case of a constant mass.
Indeed, when the mass $\mass$ does not depend on $t$ or the function $\beta$ is linear, i.e.\  $\beta(u) = \nu u$, we refer to mass or learning rate as \emph{homogeneous}, respectively. Since we are otherwise always interested in reducing them over time, we speak of \emph{decreasing} mass or learning rate, respectively.
When decreasing mass and learning rate, an additional assumption -- Assumption \ref{comass} -- is needed. Since this assumption is rather technical, we delay its statement and discuss it instead along with the associated convergence result in Subsection~\ref{sec_losingboth}. 

In order to establish well-posedness of the system, the mass $(m(t))_{t\geq0}$ must not decrease too quickly. Here, we have the following assumption.
\begin{assumption}\label{as1.21}
There exists a constant $\lambda \in (0,1]$, such that $\abs{\e'(t)}\le \lambda\e(t)$, $t>0$. Moreover, we assume that $\e(0)=:\e_0 \le 1$.
\end{assumption}

    It is easy to verify that $\e(t)= \frac{\e_0}{\lambda^{-1}+t}$ and  $\e(t)= \e_0e^{-\lambda t}$ satisfy Assumption~\ref{as1.21}. Setting $m_0 = \lambda = 1$, we retrieve the example in Figure~\ref{figure:losing_momentum}. More generally, Assumption \ref{as1.21} implies  $\e(t)\ge \e_0e^{-\lambda t}$. Hence, the mass decreases at most exponentially with a rate $\geq -1$.

\subsection{Contributions {and main results}}
We have introduced the stochastic gradient-momentum process above. In this work, we analyse this dynamical system, discuss its discretisation, and employ it in numerical experiments. 
We are interested in both, convergence to global minimisers and implicit regularisation. Thus, we study three different stochastic gradient-momentum processes:
(1) homogeneous mass and homogeneous learning rate,
(2) decreasing mass and homogeneous learning rate, and
(3) decreasing mass and  decreasing learning rate, respectively.
    We now summarise our contributions:
   \begin{itemize}
       \item  We study the  connection between SGMP and the gradient flow, the underdamped gradient flow, as well as the stochastic gradient process. These connections allow us to investigate the longtime behaviour of the stochastic gradient-momentum process.
       \end{itemize}
       We obtain those connections by considering  momentum-to-no-momentum asymptotics (letting a homogeneous mass $m$ approach $0$), and random-to-deterministic asymptotics (for a homogeneous $\beta(u) = \nu u$, letting $\nu$ approach $0$). 
   {Understanding the longtime behaviour eventually allows us to study the convergence of the underlying algorithm.}
We summarise our three main results below and also give informal statements of the corresponding theorems -- the complete versions are given later in the text.
    \begin{itemize}
        \item In the fully homogeneous case, we show that the stochastic gradient-momentum process is close to the stochastic gradient process when the mass is small. {This can be understood as the stochastic version of the deterministic result established in \cite{attouch2000heavy-2}.}
        
        {
        \textbf{Theorem \ref{second th} (informal)} \textit{Let $(q_t^\e)_{t \geq 0}$ be the stochastic gradient-momentum process (\ref{eq:AS:pq}) with constant mass $m$ and $(\theta_t^\e)_{t \geq 0}$  be the stochastic gradient process (\ref{Eq_SGPCed}). Under Assumptions \ref{asSGPf} and \ref{as1.2}, we have
        $$ \E\Big[\sup_{0\le t\le T}\norm{q^\e_t-\theta^\e_t}   \Big]\to 0 \ \ \ \ (m\downarrow0). $$}}
        
        \item If the mass decreases over time, the stochastic gradient-momentum process converges to the stochastic gradient process in the longtime limit.

        {
        \textbf{Theorem \ref{thm:dm2p} (informal)} \textit{
        Let $(\tilde q_t)_{t \geq 0}$ be the decreasing-mass stochastic gradient-momentum process (\ref{eq:AS:pqeiet})  with time-dependent momentum $(m(t))_{t\geq0}$ and $(\theta_t)_{t \geq 0}$ be the stochastic gradient process (\ref{eq:SGP_old}). Under strong convexity and Assumptions \ref{asSGPf} and \ref{as1.21}, we have, almost surely and in expectation,
        $$  \norm{\tilde q_t-\theta_t}\to 0 \ \ \ \ (t\to\infty). $$
        }}
        
        \item If mass and learning rate decrease over time, the stochastic gradient-momentum process converges to the minimiser of the full target function in the longtime limit.
        
        {
        \textbf{Corollary \ref{cor:convmi} (informal)} \textit{Let $(\hat q_t)_{t \geq 0}$ be the decreasing-mass, decreasing-learning-rate stochastic gradient-momentum process with time-dependent mass $(m(t))_{t \geq 0}$ and learning rate $(\beta(t))_{t \geq 0}$. Let $\theta_*$ be the minimiser of the full target function.  Under strong convexity and Assumptions \ref{asSGPf}, \ref{as1.21}{, and \ref{comass},} we have
        $$\norm{\hat q_t-\theta_*}  \overset{\text{in probability}}{\xrightarrow{\hspace*{1.5cm}}} 0 \ \ \ \ (t\rightarrow\infty).$$}} 
    \end{itemize}
    The connection to the stochastic gradient process is especially interesting, as its longtime behaviour in convex settings is well-understood, see \cite{Jin, Jonas}. Thus, we can argue that SGMP leads to a similar implicit regularisation when the mass is small or converging to zero. 
In a wider sense, the connections to the other methods allow us to interpret the stochastic gradient-momentum process as a method that can freely interpolate between gradient flow, underdamped gradient flow, and stochastic gradient process -- through adjusting learning rate $\nu^{-1}$ and mass $m$. We depict this relationship in Figure~\ref{fig_tikz_SGMP}. 
   \begin{figure}[htb]
       \centering
       \begin{tikzpicture}
       \draw[->,very thick] (0,0) -- (0,5.25) node[anchor=north east] {mass $m$};
       \draw[->,very thick] (0,0) -- (5.25,0) node[anchor=west] {learning rate $\nu$};
      \draw[pattern=north west lines, pattern color=gray] (0.6,0.6) rectangle (4.75,4.75);
      \draw[pattern=crosshatch, pattern color=gray] (-1.25,-1.25) rectangle (0.4,0.4);
      \draw[pattern=vertical lines, pattern color=gray] (-1.25,0.6) rectangle (0.4,4.75);
      \draw[pattern=horizontal lines, pattern color=gray] (0.6,-1.25) rectangle (4.75,0.4);

      \node[fill=white!80,text width= 2.4cm, rotate = 0] at (2.625,2.625) {\tiny Stochastic Gradient-Momentum Process} ;
       \node[fill=white!80,text width= 3.4cm] at (2.75,-0.6) {\tiny Stochastic Gradient Process} ;
         \node[fill=white!80,text width= 1.1cm] at (-0.4,-0.6) {\tiny Gradient Flow} ;
                \node[fill=white!80,text width= 3.4cm, rotate = 90] at (-0.6,2.75) {\tiny Underdamped Gradient Flow} ;

       \end{tikzpicture}
       \caption{Schematic comparing gradient flow, underdamped gradient flow, stochastic gradient process, and stochastic gradient-momentum process in terms of the particle mass $m$ and learning rate $\nu$.}
       \label{fig_tikz_SGMP}
   \end{figure}

Surprisingly, the continuous relationship shown in Figure~\ref{fig_tikz_SGMP} is lost when, e.g.\ comparing the discrete-time algorithms SGD and classical momentum. This is mainly due to the symplectic Euler discretisation that is employed in classical momentum. Especially problematic is the instability of classical momentum when the mass is small. The step-size needs to be chosen as $O(m; m \downarrow 0)$. From this computational perspective, our contributions are the following:
\begin{itemize}
\item We propose a discretisation strategy for the stochastic gradient-momentum process. The strategy is a semi-implicit method that is explicit in the position $(q_t)_{t \geq 0}$ and  implicit in the velocity $(p_t)_{t \geq 0}$. This discretisation technique allows us to choose the step-size independently of the mass $m$. 
{Thus, the method is stable for small masses as well as masses converging to zero.}
{Similarly to the symplectic Euler method that is conventionally used in momentum-based optimisation, our scheme is symplectic. Thus, it respects (local) conservation laws in the underdamped dynamics.} 

\item We test our discretised algorithm in numerical experiments. We start with  academic convex and non-convex optimisation problems in low and high dimensions. Then we study the training of a convolutional neural network (CNN), which we use to classify images from the CIFAR-10 dataset. The achieved train and test accuracy with our stable discretisation is comparable to stochastic gradient descent with classical momentum. Some of these experiments go beyond the convex setting that we study in our analysis.
   \end{itemize} 
This work is organised as follows. We introduce and investigate the homogeneous-in-time and heterogeneous-in-time stochastic gradient-momentum processes in  Sections~\ref{Sec_Homogene} and \ref{sec:heterogeneous}, respectively. In Section~\ref{sec:discr_meth} we propose discretisation techniques  which we then employ  in the mentioned academic and deep learning problems in Section~\ref{sec:appl}. A conclusion is given in Section~\ref{sec_conc}. Throughout the main part of the work, we require certain results about underdamped gradient flows, which we discuss in Appedices \ref{appendix:underdamped} and \ref{appendix_decreasing_momentum}. Other auxiliary results needed throughout this work, are presented  in  Appendix~\ref{appendix}. Appendix~\ref{appendix:kushner} recalls and summarises a result by \cite{Kushner1}.

\section{Homogeneous-in-time} \label{Sec_Homogene}
In this section, we study the stochastic gradient-momentum process with homogeneous momentum and learning rate. In the first part, Subsection~\ref{sec_momentum_w/w/o_subsamp}, we investigate the interplay between the underdamped gradient flow and the stochastic gradient-momentum process. In the second part, Subsection~\ref{sec_subsamp_w/wo/momentum}, we compare the stochastic gradient-momentum process and the stochastic gradient process. 
We first introduce the dynamical system central to this section. 

\begin{definition} \label{def:mSGPC}
The \emph{homogeneous stochastic gradient-momentum process (hSGMP)} is a solution of the following stochastic differential equation,
\begin{equation}\label{eq:AS:pqe}
\left\{\begin{array}{rl}
\de q^{\nu,m}_t &= p^{\nu,m}_t \de t,\\
m\de p^{\nu,m}_t &= - \nabla \Phi_{i^{\nu}(t)}(q^{\nu,m}_t)\de t-\alpha p^{\nu,m}_t\de t, \\
p^{\nu,m}_0 &= p_0 \in X,\\ 
q^{\nu,m}_0 &= q_0 \in X,
\end{array}\right.
\end{equation}
where $\alpha, m, \nu>0$ are
{the} viscosity friction
{parameter}, {the} mass, and the learning rate, respectively -- {all kept constant.}
{Finally,} $\Phi_j$ satisfies Assumption \ref{asSGPf}, for every {$j \in I$} and the stochastic process $i^\nu(t)=i(t/\nu)$ is given as in Definition \ref{index}. In the following, we denote
\begin{itemize}
    \item[(i)] $(p^\nu_t, q^\nu_t)_{t \geq 0} := (p^{\nu,1}_t, q^{\nu,1}_t)_{t \geq 0}$, i.e.\  choosing (without loss of generality) a constant unit mass $m := 1$ and varying the learning rate $\nu$.
\item[(ii)] $(p^m_t, q^m_t)_{t \geq 0} := (p^{m^\delta,m}_t, q^{m^\delta,m}_t)_{t \geq 0}$, for some $\delta \in [0,1)$, i.e.\  varying the mass, and choosing a constant or a mass-depending learning rate $\nu := m^\delta$; in this case, we additionally choose (without loss of generality) a unit viscosity friction $\alpha := 1$.
\end{itemize}
\end{definition}
The Lipschitz condition in Assumption~\ref{asSGPf} guarantees the well-posedness of this system, which can be shown similarly as in \cite{Jin,Jonas}. In Subsections~\ref{sec_momentum_w/w/o_subsamp} and \ref{sec_subsamp_w/wo/momentum}, we consider the hSGMPs $(p^\nu_t, q^\nu_t)_{t \geq 0}$ and $(p^m_t, q^m_t)_{t \geq 0}$, respectively. 

\subsection{Momentum with and without subsampling} \label{sec_momentum_w/w/o_subsamp}
First, we study  the interplay between subsampling and not subsampling in the momentum dynamics. Indeed, we show that the stochastic gradient-momentum process can approximate the underdamped gradient flow at any accuracy. More precisely, we are interested in the limiting behavior of hSGMP $(p^\nu_t, q^\nu_t)_{t \geq 0}$  as the learning rate approaches zero uniformly, i.e.\  we take $\nu\to0$  in \eqref{eq:AS:pqe}. We prove that the hSGMP converges to the solution to the underdamped gradient flow, which is given by 
\begin{equation}\label{eq:AS:pq}
\left\{ \begin{array}{rl}
\de q_t &= p_t \de t,\\
\de p_t &= - \nabla \bar\Phi(q_t) \de t-\alpha p_t \de t, \\
p_0, q_0 &\in X,
\end{array} \right.
\end{equation}
where we implicitly set the mass $m := 1$. In this case, we show that hSGMP is a stochastic approximation to the underdamped gradient flow.

We now formulate the statement about the convergence of hSGMP to the underdamped gradient flow more particularly.
In principle, we show convergence of a random path to a deterministic path -- both are contained in the space of continuous functions from $[0,\infty)$ to $X^2$ which we denote by $\mathcal{C}([0,\infty),X^2)$ and equip with the metric $$
\rho\Big((\zeta_t)_{t\ge 0},(\xi_t)_{t\ge 0}\Big):= \int_0^\infty e^{-t} \left(1\land\sup_{0\le s\le t}\norm{\zeta_s-\xi_s}\right)\de t,
$$  
where as usual $a \land b := \min\{a,b\}$ for $a, b \in \mathbb{R}$.
Probabilistically, we show convergence in the weak sense. We state and prove the convergence result below.

\begin{theorem}\label{wcovpq}
 Let $(q^\nu_t,p^\nu_t)_{t\ge 0}$ and $(q_t,p_t)_{t\ge 0}$  solve (\ref{eq:AS:pqe}) and (\ref{eq:AS:pq}). Then $(q^\nu_t,p^\nu_t)_{t\ge 0}$  converges weakly to $(q_t,p_t)_{t\ge 0}$ in  $\mathcal{C}([0,\infty),X^2)$ as $\nu\to 0$, i.e.\ 
 for any bounded continuous function $F: \mathcal{C}([0,\infty),X^2) \rightarrow \mathbb{R}$,
 $$\E [F\big((p^\nu_t,q^\nu_t)_{t\ge 0}\big)] \to \E [F\big((p_t,q_t)_{t\ge 0}\big)]=F\big((p_t,q_t)_{t\ge 0}\big).$$
\end{theorem}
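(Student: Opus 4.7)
The plan is to use the standard two-step scheme for weak convergence of random paths: (i) show tightness of the family $\{(q^\nu_t,p^\nu_t)_{t\ge 0}\}_{\nu>0}$ in $\mathcal{C}([0,\infty):X^2)$, and (ii) identify every subsequential weak limit as the deterministic underdamped gradient flow \eqref{eq:AS:pq}. Because the limit ODE is Lipschitz (hence has a unique solution given $(q_0,p_0)$), the whole family must converge, and weak convergence to a constant path is equivalent to convergence in probability, which yields the claimed statement about $F$.

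For tightness, I would first derive an a priori energy estimate by testing the $p^\nu$-equation against $p^\nu_t$. Using Assumption~\ref{asSGPf} and the coercivity afforded by the $-\alpha p^\nu_t$ term, a Gronwall argument gives
$$\sup_{0\le s\le T}\big(\norm{p^\nu_s}^2+\norm{q^\nu_s}^2\big)\le C_T,$$
uniformly in $\nu>0$. This bound on $p^\nu$ forces $q^\nu$ to be uniformly Lipschitz (since $\dot q^\nu_t=p^\nu_t$), and combined with the Lipschitz property of each $\nabla\Phi_i$, it also forces $p^\nu$ to be uniformly Lipschitz. Arzelà–Ascoli then yields tightness on every $[0,T]$, and since the metric $\rho$ is the standard locally uniform one, tightness on $\mathcal{C}([0,\infty):X^2)$ follows.

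To identify the limit, apply Skorokhod representation along any weakly convergent subsequence $(q^{\nu_k},p^{\nu_k})\to(\tilde q,\tilde p)$ almost surely. The first equation passes to the limit trivially, so $\mathrm{d}\tilde q_t=\tilde p_t\,\mathrm{d}t$. The core difficulty is the drift term: we must show
$$\int_0^t \nabla\Phi_{i^{\nu_k}(s)}(q^{\nu_k}_s)\,\mathrm{d}s \ \longrightarrow\ \int_0^t \nabla\bar\Phi(\tilde q_s)\,\mathrm{d}s.$$
Split this into the averaging error $\int_0^t[\nabla\Phi_{i^{\nu_k}(s)}(q^{\nu_k}_s)-\nabla\bar\Phi(q^{\nu_k}_s)]\,\mathrm{d}s$ and the continuity error $\int_0^t[\nabla\bar\Phi(q^{\nu_k}_s)-\nabla\bar\Phi(\tilde q_s)]\,\mathrm{d}s$. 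The latter vanishes by uniform convergence of $q^{\nu_k}$ on compacts together with the Lipschitz continuity of $\nabla\bar\Phi$.

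The main obstacle is the averaging error. The CTMC $(i(t))_{t\ge 0}$ with generator $\mathbf{A}_N=\mathbf{\Gamma}_N-N\gamma\mathbf{I}_N$ is uniformly ergodic with the uniform distribution on $I$ as its stationary law, and the spectral gap is explicit (equal to $N\gamma$), so we have the mixing bound $\norm{\mathrm{Law}(i(s))-\pi}_{\mathrm{TV}}\le Ce^{-N\gamma s}$. Consequently, $i^\nu$ mixes on the fast time scale $\nu/(N\gamma)$. A block decomposition of $[0,t]$ into intervals of length $\Delta_\nu=\sqrt{\nu}$, freezing $q^\nu$ at the left endpoint of each block (the freezing error is $O(\Delta_\nu)$ by the uniform Lipschitz bound on $q^\nu$), reduces the averaging error to controlling
$$\E\Big\|\tfrac{1}{\Delta_\nu}\int_{s}^{s+\Delta_\nu}\big[\nabla\Phi_{i^\nu(u)}(q)-\nabla\bar\Phi(q)\big]\,\mathrm{d}u\Big\|^2 = O(\nu/\Delta_\nu)$$
for $q$ fixed, which is a standard consequence of exponential $\beta$-mixing. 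Summing over the $O(t/\Delta_\nu)$ blocks gives a total error of order $\sqrt{\nu}$, which vanishes as $\nu\to 0$. With this averaging step in hand, $(\tilde q,\tilde p)$ solves \eqref{eq:AS:pq}; uniqueness identifies the limit with the deterministic $(q,p)$, and since every subsequence has the same weak limit, the full family $(q^\nu,p^\nu)$ converges weakly to $(q,p)$, proving the theorem.
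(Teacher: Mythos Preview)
Your proposal is correct and follows the classical tightness-plus-identification scheme, but the paper takes a shorter route: it invokes \cite[Chapter 7, Theorem 4.3]{Kushner1} as a black box, casting \eqref{eq:AS:pqe} in the form required there (with $\xi_t=i(t)$, $\bar G(x,y)=(y,-\nabla\bar\Phi(x)-\alpha y)$, $\tilde G(x,y,i)=(0,-\nabla\Phi_i(x)+\nabla\bar\Phi(x))$), and then simply verifies the hypotheses (A4.2)--(A4.6). The only nontrivial one is the averaging condition (A4.5), namely that
\[
\frac{1}{\tau}\int_t^{t+\tau}\E\big[\nabla\bar\Phi(x)-\nabla\Phi_{i(s)}(x)\,\big|\,\{i(s')\}_{s'\le t}\big]\,ds\to 0\quad\text{as }t,\tau\to\infty,
\]
which the paper checks by the Markov property and the explicit transition probabilities $\mathbb P_i(i(s)=j)=\frac{1-e^{-Ns}}{N}+e^{-Ns}\mathbbm 1_{i=j}$, getting a bound of order $1/(N\tau)$.

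The trade-off: your argument is self-contained and even yields a quantitative rate $O(\sqrt\nu)$ on the averaging error, whereas the paper's proof is a few lines once Kushner's theorem is accepted. One point in your sketch deserves a sentence of care: when you freeze $q^\nu$ at block endpoints and then bound the frozen averaging error ``for $q$ fixed'', the frozen value $q^\nu_{s_k}$ is $\mathcal F_{s_k}$-measurable and depends on the past of $i^\nu$; you should condition on $\mathcal F_{s_k}$, use the Markov property of $i^\nu$, and invoke the uniform bound $\sup_{s\le T}\|q^\nu_s\|\le C_T$ to control the resulting $q$-dependent constants. This is routine, but it is the place where your ``$q$ fixed'' shorthand hides the actual coupling between the slow and fast variables.
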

\begin{proof}
We apply Theorem 4.3 in \citet[Chapter 7]{Kushner1}, which studies tightness and weak convergence properties of solutions of a certain type of stochastic differential equations. The form of the differential equation is given in (2.2) \citep[Chapter 7]{Kushner1}, we also recall this result in Appendix~\ref{appendix:kushner}. This corresponds to equation (\ref{eq:AS:pq}) with $\xi_t=i(t)$, $\bar{G}(x,y) = (y,-\nabla_x \bar\Phi(x)-\alpha y)$, $\tilde{G}(x,y,i) = (0,- \nabla_x \Phi_i(x) + \nabla_x \bar\Phi(x))$, and $F = 0$. In \citet[Chapter 7, Theorem 4.3]{Kushner1}, the differential operator $A$ in our case is defined in (\ref{eq:AS:pqe}), i.e.\  $Ah(x,y) = - \nabla h(x,y) \cdot (y,-\nabla_x \bar\Phi(x)-\alpha y)$ for any $h$ twice differentiable. If Assumptions (A4.2) to (A4.6) from \cite{Kushner1} were satisfied, applying Theorem 4.3 in \cite[Chapter 7]{Kushner1} would imply that $(q^\nu_t,p^\nu_t)_{t\ge 0}$ is tight and the limit of any weakly convergent subsequence solves (\ref{eq:AS:pqe}). Since $q^\nu_0 = q_0$ and $p^\nu_0 = p_0$, we have $(q^\nu_t,p^\nu_t)_{t\ge 0}$ converges weakly to $(q_t,p_t)_{t\ge 0}$.
Therefore, we just need to verify Assumptions (A4.2) to (A4.6) from \cite{Kushner1}. Assumption (A4.2) follows directly from Assumption~\ref{asSGPf}. Assumption (A4.3) holds since $(i(t))_{t \geq 0}$ is c\`adl\`ag and bounded. Assumptions (A4.4) and (A4.6) trivially hold since $F = 0$. The only non-trivial part is to verify Assumption (A4.5), that is as $t,\tau\to \infty$,
\begin{align*}
    \frac{1}{\tau}\int_t^{t+\tau} \E[- \nabla_x \Phi_{i(s)}(x) + \nabla_x \bar\Phi(x)|(i(s'))_{s'\le t}] \de s \to 0,
\end{align*}
almost surely.
By the Markov property of $(i(t))_{t\ge0}$ and since $i(0)$ is stationary,
\begin{align*}
   \Big| \frac{1}{\tau}\int_t^{t+\tau} \E[- \nabla_x &\Phi_{i(s)}(x) + \nabla_x \bar\Phi(x)|\{i(s')\}_{s'\le t}] \de s\Big|\\
   =& \Big|\frac{1}{\tau}\int_0^{\tau} \E_{i(t)}[- \nabla_x \Phi_{i(s)}(x) + \nabla_x \bar\Phi(x)] \de s\Big|\\
   =& \Big|\frac{1}{\tau}\int_0^{\tau} \E_{i(t)}[ \nabla_x \Phi_{i(s)}(x) - \nabla_x \bar\Phi(x)] \de s\Big|\\
    \le& \sum_{k=1}^N\Big|\frac{1}{\tau}\int_0^{\tau} \E_k[ \nabla_x \Phi_{i(s)}(x) - \nabla_x \bar\Phi(x)] \de s\Big|\\
    =& \frac{1}{\tau}\sum_{k=1}^N\Big|\int_0^{\tau} \Big(\sum_{j=1}^N \frac{1-\exp(-Ns)}{N}\nabla_x \Phi_j(x)\Big)+\exp(-Ns)\Phi_k(x)- \nabla_x \bar\Phi(x)\de s\Big|\\
    = & \frac{1}{\tau}\sum_{k=1}^N\Big|\int_0^{\tau} \exp(-Ns)\Big(\nabla_x\Phi_k(x)- \nabla_x \bar\Phi(x)\Big)\de s\Big|\\
    \le& \frac{1}{N\tau}\sum_{k=1}^N \abs{\nabla_x\Phi_k(x)- \nabla_x \bar\Phi(x)}\to 0.
\end{align*}
as $\tau\to \infty$.
\end{proof}
Hence, hSGMP $(q^\nu_t,p^\nu_t)_{t\ge 0}$ is a stochastic approximation of the underdamped gradient flow  $(q_t,p_t)_{t\ge 0}$. 
{
Assumption \ref{as1.2} on the other hand implies that the position of the underdamped gradient flow $q_t$ converges to the minimiser of $\bar\Phi$ as $t \rightarrow \infty$. This already gives us reason to believe that an SGMP is able to identify the minimiser of $\bar\Phi$. We discuss the longtime behaviour of the underdamped gradient flow in detail in Appendix \ref{appendix:underdamped}.}

\subsection{Subsampling with and without momentum} \label{sec_subsamp_w/wo/momentum}
In a similar way to the last section, we now study the relation between the homogeneous stochastic gradient-momentum process and the stochastic gradient process. Indeed, we let the momentum in hSGMP go to zero by letting the mass $\e$ go to zero. We show that in the limit, we  converge to the stochastic gradient process. We do this while either keeping a constant learning rate or while allowing the learning rate to depend on the mass and thus go to zero as well. Specifically, we consider the process $(p^m_t, q^m_t)_{t \geq 0}$ introduced in Definition~\ref{def:mSGPC}(ii). 
Namely, we set $\delta \in [0,1)$, learning rate $\nu := m^\delta$, and $\alpha :=1$.
We then aim to show that $(q_t^{i,m})_{t \geq 0}$ in \eqref{eq:AS:pq} converges uniformly to some stochastic gradient process as $\e \rightarrow 0$. The associated stochastic gradient process is the following:
\begin{equation} \label{Eq_SGPCed}
\left\{ \begin{array}{rl}
    \de  \theta^\e_t &= - \nabla \Phi_{i(t/m^\delta)}(\theta^\e_t) \de t,\\
    \theta^\e_0 &=\theta_0.
    \end{array} \right.
\end{equation}

Throughout our discussion, we need several auxiliary results regarding momentum-to-no-momentum limit in the deterministic case. We have collected these results, that may be of independent interest, in Appendix~\ref{appendix_decreasing_momentum}.
 We establish the connection between $(q_t^\e)_{t \geq 0}$ and $(\theta_t^\e)_{t \geq 0}$ in the following theorem by iterating over jump times of the index process. In each of the iteration steps, we employ a result regarding the underdamped gradient flow that we summarise and prove in Proposition~\ref{inductionlem} in Appendix~\ref{appendix_decreasing_momentum}.
\begin{theorem}\label{second th}  {Let $\Phi_i$ be convex and satisfy Assumptions \ref{asSGPf} and \ref{as1.2} with $\mathcal{A}^{\lambda_i, 1}$ and critical point $\theta^i_*$ for $i\in I$.} Let $(p^{\e}_t,q^{\e}_t)_{t \geq 0}$ and $(\theta_t^\e)_{t \geq 0}$ solve (\ref{eq:AS:pq}) and (\ref{Eq_SGPCed}), respectively. 
For $0\le\delta<1$ and $T>0$, we have
\begin{align*}
 \E\Big[\sup_{0\le t\le T}\norm{q^\e_t-\theta^\e_t}   \Big]\le \Big(1+C'_L \e(1+T)\Big)e^{\e^{1-\delta}T(1+T)\gamma N}\Big(\norm{q_0-\theta_0}+\e\norm{p_0}\Big)\\
    +\Big(e^{\e^{1-\delta}T(1+T)\gamma N}-1+C'_L\e (T+1)\Big) K_{\Phi,T,\theta_0},
\end{align*}
where $C'_L, K_{\Phi,T,\theta_0}>0$ are constants.
\end{theorem}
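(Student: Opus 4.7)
The plan is to partition $[0,T]$ according to the jump times of the index process $i^{\e^\delta}$, apply Proposition~\ref{inductionlem} on each inter-jump interval, and then take expectation over the (Poisson-distributed) number of jumps $J$.

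Let $0 = \tau_0 < \tau_1 < \cdots < \tau_J \le T < \tau_{J+1}$ denote the successive jump times of $i^{\e^\delta}$ up to time $T$, and set $\Delta_k := (\tau_{k+1} \wedge T) - \tau_k$. On each $[\tau_k, \tau_{k+1} \wedge T]$ the index is frozen at some value $i_k \in I$, so $(q^\e, p^\e)$ and $\theta^\e$ coincide with the fixed-sample systems \eqref{eq:AS:pqii} and \eqref{Eq_SGPC} with initial data $(q^\e_{\tau_k}, p^\e_{\tau_k})$ and $\theta^\e_{\tau_k}$ respectively. Writing $d_k := \norm{q^\e_{\tau_k} - \theta^\e_{\tau_k}}$ and applying Proposition~\ref{inductionlem} on the interval yields
$$\sup_{t \in [\tau_k, \tau_{k+1} \wedge T]} \norm{q^\e_t - \theta^\e_t} \le (1 + a_k) d_k + a_k b_k, \qquad a_k := C^{(0)}_L \e (1 + \Delta_k), \ b_k := \norm{p^\e_{\tau_k}} + \norm{\theta^\e_{\tau_k} - \theta^{i_k}_*},$$
and in particular $d_{k+1} \le (1+a_k) d_k + a_k b_k$. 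Iterating and using the telescoping identity $\sum_{j=0}^{J} a_j \prod_{\ell=j+1}^J (1+a_\ell) = \prod_{\ell=0}^J (1+a_\ell) - 1$, one arrives at the structural bound
$$\sup_{0 \le t \le T} \norm{q^\e_t - \theta^\e_t} \le \Big(\prod_{\ell=0}^J (1+a_\ell)\Big) d_0 + \Big(\prod_{\ell=0}^J (1+a_\ell) - 1\Big) \max_{k \le J} b_k,$$
plus an additive $C^{(0)}_L \e (1+\Delta_0) \norm{p_0}$ contribution from the very first interval, which is the origin of the $\e\norm{p_0}$ correction in the announced inequality.

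To take expectation, we use the crude bound $a_\ell \le C^{(0)}_L \e(1+T)$ (since $\Delta_\ell \le T$) and $\prod(1+a_\ell) \le \exp(\sum a_\ell)$ to get $\prod_\ell(1+a_\ell) \le \exp\!\big(C^{(0)}_L \e (1+T) J\big)$. Since $J$ is Poisson with mean $\mu = (N-1)\gamma T/\e^\delta$, the moment generating function of the Poisson distribution gives
$$\E\Big[\exp\big(C^{(0)}_L \e (1+T) J\big)\Big] = \exp\!\Big(\mu\big(e^{C^{(0)}_L \e (1+T)} - 1\big)\Big) \le \exp\!\Big(C \gamma N T (1+T) \e^{1-\delta}\Big)$$
for $\e$ small enough that $e^{C^{(0)}_L \e(1+T)} - 1$ can be replaced by its linear upper bound, producing the claimed exponential factor. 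Bounding $\max_k b_k$ by a deterministic constant $K_{\Phi,T,\theta_0}$ (which can then be pulled outside of the expectation), and reorganising the first-interval contribution into the prefactor $(1 + C'_L \e(1+T))$, yields the announced inequality.

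The main obstacle is to establish the almost-sure bound $\max_k b_k \le K_{\Phi,T,\theta_0}$ uniformly in the random jump structure. Lemma~\ref{lem:bound+Lip} controls $\norm{p^{i,\e}_t}$ on a single interval but does not contract on inter-jump intervals of arbitrary (possibly short) length; the term $\norm{\theta^\e_{\tau_k} - \theta^{i_k}_*}$ requires an \emph{a priori} trajectory bound on $(\theta^\e_t)_{t \le T}$ valid across all jumps. Both controls can be closed via a Gr\"onwall-type argument on $\norm{q^\e_t - \theta_0}$ and $\norm{\theta^\e_t - \theta_0}$, exploiting the global Lipschitz bound on the family $(\nabla \Phi_i)_{i \in I}$ and inductively propagating the estimate of Lemma~\ref{lem:bound+Lip} through the jumps to ensure that each $\norm{p^\e_{\tau_k}}$ is dominated by a quantity depending only on $\norm{p_0}$, $\norm{q_0}$, $L$, $T$, and $\max_i \norm{\theta^i_*}$.
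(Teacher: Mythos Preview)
Your overall architecture matches the paper's: partition $[0,T]$ at the jump times of $i^{\e^\delta}$, apply Proposition~\ref{inductionlem} on each piece, iterate, and close by the Poisson moment-generating function. The point of divergence is precisely the one you flag as ``the main obstacle'', namely how to control $\|p^\e_{\tau_k}\|$.

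The paper does \emph{not} attempt a deterministic bound on $\max_k b_k$. Instead it runs a \emph{coupled} recursion: with $A_n:=\sup_{t\le\tau_n^{\e,\delta}}\|q^\e_t-\theta^\e_t\|$ and $B_n:=\sup_{t\le\tau_n^{\e,\delta}}\|p^\e_t\|$, Lemma~\ref{lem:bound+Lip} gives $B_{n+1}\le(1+\e)B_n+C_L(A_n+K_{\Phi,T,\theta_0})$ while Proposition~\ref{inductionlem} gives $A_{n+1}\le A_n+C^{(0)}_L\e(1+T)(A_n+B_n+K_{\Phi,T,\theta_0})$. The trick is to set $D_n:=A_n+\e B_n$; the $\e$-weight in front of $B_n$ absorbs the bad $(1+\e)$ factor and collapses both lines into a single clean recursion $D_{n+1}\le(1+C'_L\e(1+T))D_n+C'_L\e(1+T)K_{\Phi,T,\theta_0}$. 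Here $K_{\Phi,T,\theta_0}$ bounds only $\|\theta^\e_{\tau_n}-\theta^{i}_*\|$ (Lemma~\ref{thetabound} in the appendix, linear in $T$), not $\|p^\e_{\tau_n}\|$. The $\e\|p_0\|$ in the final statement is simply $D_0=\|q_0-\theta_0\|+\e\|p_0\|$, not a first-interval correction as you suggest.

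Your alternative---a global Gr\"onwall on $\|q^\e_t\|$ across jumps to bound $\sup_{t\le T}\|p^\e_t\|$ pathwise---can indeed be made to work (the integral kernel $\e^{-1}e^{-(t-s)/\e}$ has unit mass, so no blow-up in $\e$), but it produces a constant exponential in $LT$ rather than polynomial, and it buries the $\|p_0\|$-dependence inside $K$ instead of isolating it as $\e\|p_0\|$. So you would obtain \emph{a} bound of the right qualitative shape, but not the one stated. Also, a minor point: since $\E[(1+c)^J]=e^{\mu c}$ exactly for Poisson $J$, your detour through $\prod(1+a_\ell)\le e^{\sum a_\ell}$ followed by $\E[e^{cJ}]=e^{\mu(e^c-1)}$ and a small-$\e$ linearisation is unnecessary; the paper applies the MGF directly to $(1+C'_L\e(1+T))^{N_{T/\e^\delta}}$.
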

\begin{proof}
We now denote by $(\tau_n)_{n\ge 0}$ and $(\tau^{\e,\delta}_n)_{n\ge 0}$ the sequences of the jump times of processes $(i(t))_{t \geq 0}$ and $(i^{\e,\delta}(t))_{t \geq 0}$, respectively. We set $\tau_0=0.$  From the definition, we know that the last jump time before $t$ is $\tau_{N_t},$ where $(N_t)_{t \geq 0}$ is a Poisson process with rate $\gamma N.$ And since $i^{\e,\delta}(t)=i(t/\e^{\delta})$, we have $\tau^{\e,\delta}_n= \e^\delta\tau_n .$ By Proposition \ref{inductionlem}, for $\tau^{\e,\delta}_n<t\le \tau^{\e,\delta}_{n+1},$ we have
\begin{align*}
    \norm{q^\e_t-\theta^\e_t}\leq& \norm{q^\e_{\tau^{\e,\delta}_n}-\theta^\e_{\tau^{\e,\delta}_n}} \\ 
    & \quad
    +C^{(0)}_L\e(1+t-\tau^{\e,\delta}_n) \Big(\norm{q^\e_{\tau^{\e,\delta}_n}-\theta^\e_{\tau^{\e,\delta}_n}}+\norm{p^\e_{\tau^{\e,\delta}_n}}+\norm{\theta^\e_{\tau^{\e,\delta}_n}-\theta^{i^{\e,\delta}(\tau^{\e,\delta}_n)}_*}\Big)
\end{align*}
and from the first inequality in Lemma \ref{lem:bound+Lip}, we obtain
\begin{align*}
    \norm{p^\e_t}\le (1+C_L\e)\norm{p^\e_{\tau^{\e,\delta}_n}}+C_L  \norm{q^\e_{\tau^{\e,\delta}_n}-\theta^\e_{\tau^{\e,\delta}_n}}+\norm{\theta^\e_{\tau^{\e,\delta}_n}-\theta^{i^{\e,\delta}(\tau^{\e,\delta}_n)}_*}.
\end{align*}
     From Lemma \ref{thetabound}, we know
     \begin{align*}
         \norm{\theta^\e_{\tau^{\e,\delta}_n}-\theta^{i^{\e,\delta}(\tau^{\e,\delta}_n)}_*}\le \norm{\theta^\e_{\tau^{\e,\delta}_n}}+\norm{\theta^{i^{\e,\delta}(\tau^{\e,\delta}_n)}_*} \le \norm{\theta_0}+C_\Phi T+K_{\Theta_*}:=K_{\Phi,T,\theta_0}.
     \end{align*}
     So we have
     \begin{align*}
         &\sup_{0\le t\le \tau^{\e,\delta}_{n+1}}\norm{q^\e_t-\theta^\e_t}\le \max\Big\{\sup_{0\le t\le \tau^{\e,\delta}_n}\norm{q^\e_t-\theta^\e_t},\sup_{\tau^{\e,\delta}_n\le t\le \tau^{\e,\delta}_{n+1}}\norm{q^\e_t-\theta^\e_t}\Big\}\\
         &\le \sup_{0\le t\le \tau^{\e,\delta}_n}\norm{q^\e_t-\theta^\e_t}
    +C^{(0)}_L\e(1+\tau^{\e,\delta}_{n+1}-\tau^{\e,\delta}_n) \Big(\norm{q^\e_{\tau^{\e,\delta}_n}-\theta^\e_{\tau^{\e,\delta}_n}}+\norm{p^\e_{\tau^{\e,\delta}_n}}+K_{\Phi,T,\theta_0}\Big).
     \end{align*}
For $T>0,$ if we assume $\tau^{\e,\delta}_{n+1}\le T,$ we have 
\begin{align*}
     \sup_{0\le t\le \tau^{\e,\delta}_{n+1}}\norm{q^\e_t-\theta^\e_t}  \le & \sup_{0\le t\le \tau^{\e,\delta}_n}\norm{q^\e_t-\theta^\e_t}\\
    &+ C^{(0)}_L\e(1+T)\Big(\sup_{0\le t\le \tau^{\e,\delta}_n}\norm{q^\e_t-\theta^\e_t}+\sup_{0\le t\le \tau^{\e,\delta}_n}\norm{p^\e_t}+K_{\Phi,T,\theta_0}\Big)
\end{align*}
and
\begin{align*}
    \sup_{0\le t\le \tau^{\e,\delta}_{n+1}}\norm{p^\e_t}   \le 
     (1+C_L\e)\sup_{0\le t\le \tau^{\e,\delta}_n}\norm{p^\e_t}
    +C_L \Big( \sup_{0\le t\le \tau^{\e,\delta}_n}\norm{q^\e_t-\theta^\e_t}+K_{\Phi,T,\theta_0}\Big).
\end{align*}
We denote
\begin{align*}
A^{\e,\delta}_n:=\sup_{0\le t\le \tau^{\e,\delta}_n}\norm{q^\e_t-\theta^\e_t}   ,\ 
    B^{\e,\delta}_n:=\sup_{0\le t\le \tau^{\e,\delta}_n}\norm{p^\e_t},\ D^{\e,\delta}_n=A^{\e,\delta}_n+\e  B^{\e,\delta}_n.
\end{align*}    
and then rewrite the inequalities  
\begin{align*}
    &A^{\e,\delta}_{n+1}\le A^{\e,\delta}_n+ \e C^{(0)}_L(1+ T)\Big(A^{\e,\delta}_n+B^{\e,\delta}_n+K_{\Phi,T,\theta_0}\Big),\\
    &B^{\e,\delta}_{n+1}\le (1+C_L\e)B^{\e,\delta}_n+C_L\Big(A^{\e,\delta}_n+K_{\Phi,T,\theta_0}\Big).
\end{align*}
Since $\e <1$, there exists a constant  $C'_L$ such that
\begin{align}\label{theorem3 D}
    D^{\e,\delta}_{n+1}
    \le \Big(1+C'_L \e(1+T)\Big)D^{\e,\delta}_n+C'_L  \e(1+T) K_{\Phi,T,\theta_0}.
\end{align}
Hence, for $ n\ge 0$, we have
\begin{align}\label{indudn}
     D^{\e,\delta}_n\le \Big(1+C'_L \e(1+T)\Big)^n\Big(\norm{q_0-\theta_0}+\norm{p_0}\Big) +\Big(\Big(1+C'_L \e(1+T)\Big)^n-1\Big)K_{\Phi,T,\theta_0}.
\end{align}
Let $n= N_{T/\e^\delta}$. From (\ref{indudn}) and by using the moment-generating function of the Poisson distribution, we conclude:
\begin{align*}
    \E[D^{\e,\delta}_{N_{T/\e^\delta}}]
    &\le \Big(\norm{q_0-\theta_0}+m\norm{p_0}\Big)\E \Big[\Big(1+C'_L \e(1+T)\Big)^{N_{T/\e^\delta}}\Big] \\
    &\ \ \ \ \ \ \  +K_{\Phi,T,\theta_0}\E\Big[\Big(1+C'_L \e(1+T)\Big)^{N_{T/\e^\delta}}-1\Big]\\
    &= e^{\e^{1-\delta}T(1+T)\gamma N}\Big(\norm{q_0-\theta_0}+C_L m\norm{p_0}\Big)+ \Big(e^{\e^{1-\delta}T(1+T)\gamma N}-1\Big)K_{\Phi,T,\theta_0}.
\end{align*}
For any $T\ge 0,$ there exists an integer $n\ge 0,$ such that $\tau^{\e,\delta}_n\le T\le \tau^{\e,\delta}_{n+1}$ (i.e.\  $n= N_{T/\e^\delta}$). Hence,
\begin{align*}
    \E\Big[\sup_{0\le t\le T}\norm{q^\e_t-\theta^\e_t}   \Big]&\le \E\Big[\sup_{0\le t\le \tau^{\e,\delta}_{n+1}}\norm{q^\e_t-\theta^\e_t}   \Big]
    \le \E[D^{\e,\delta}_{N_{T/\e^\delta}+1}]\\ &\overset{(\ref{theorem3 D})}{\le} \Big(1+C'_L \e(1+T)\Big)\E[D^{\e,\delta}_{N_{T/\e^\delta}}]+C'_L  \e(1+T) K_{\Phi,T,\theta_0}
    \\ &\le \Big(1+C'_L \e(1+T)\Big)e^{\e^{1-\delta}T(1+T)\gamma N}\Big(\norm{q_0-\theta_0}+\e\norm{p_0}\Big)\\
    &\qquad +\Big(e^{\e^{1-\delta}T(1+T)\gamma N}-1+C'_L\e (T+1)\Big) K_{\Phi,T,\theta_0}.
\end{align*}
\end{proof}
Thus, if we let the mass $\e$ go down to zero and the learning rate parameter $\nu$ is independent of $m$ or $\nu$ goes to zero at a certain rate that is slower than $m$, the stochastic gradient-momentum process  converges to the stochastic gradient process. The speed of convergence is linear in $m$, which is also the case in the deterministic setting.  Note that we show convergence in a possibly weaker sense as compared with the learning rate result in Theorem~\ref{wcovpq}. 

We finish our discussion of the homogeneous-in-time setting with a final remark regarding reducing mass and learning rate at the same time.
\begin{remark}
From \citet[Theorem 1]{Jonas} {and \citet{Jonas_correction}}, we already know that $$(\theta_t^m)_{t \geq 0} \rightarrow (\zeta_t)_{t \geq 0}$$ if $m \rightarrow 0$, i.e.\  the stochastic gradient process converges to the gradient flow as the learning rate approaches zero. Convergence is in the weak sense probabilistically and a weighted $\infty$-norm $$
\rho'\Big((\zeta_t)_{t\ge 0},(\xi_t)_{t\ge 0}\Big):= \sum^\infty_{n= 1}2^{-n}\left(1 \land \sup_{ 0\le s\le n}\norm{\zeta_s-\xi_s}\right),
$$ in space.  Combined with Theorem~\ref{second th}, we can easily see that also $(q_t^m)_{t \geq 0} \rightarrow (\zeta_t)_{t \geq 0}$ in the case $\delta \in (0,1)${, where the} convergence is in the sense of Theorem~\ref{second th}.
\end{remark}

\section{Heterogeneous-in-time} \label{sec:heterogeneous}
In the previous section, we have studied the effect of losing momentum and reducing the learning rate in a uniform fashion. Having a homogeneous mass and learning rate is unquestionably very popular in practice. However, while we could show convergence to the minimiser of the deterministic dynamics, this is not clear for the stochastic dynamics. To obtain convergence in SGMP, we need to reduce both, learning rate and momentum, over time. Hence, we need to discuss a heterogeneous version of the SGMP. Indeed, we now allow the mass $m$ to be a non-constant function of time, especially to be reduced to zero over time, and the index process $(i^{\beta}(t))_{t \geq 0}$ to have waiting times that get smaller as time progresses. Thus we study the case where the stochastic gradient-momentum process converges to the stochastic gradient process and the gradient flow, respectively, at runtime. We have already seen the advantage of reducing mass over time in the deterministic setting in Figure~\ref{figure:losing_momentum}. 

We split this section into two parts. In Subsection~\ref{sec_losingmomentum}, we let only the momentum depend on and decrease over time. We introduce the dynamical system, show well-posedness, and discuss the longtime behaviour of the new dynamical system. We see that asymptotically, the dynamical system behaves like the stochastic gradient process. Then, in Subsection~\ref{sec_losingboth}, we decrease mass and learning rate over time. In this case, we discuss a setting, in which we see convergence of the stochastic dynamical system to the minimiser $\theta_*$ of the target function $\bar\Phi$.
\subsection{Losing momentum over time} \label{sec_losingmomentum}
We consider the stochastic gradient-momentum process with a time-dependent, decreasing mass.
\begin{definition}\label{def:DmSGPC}
The \emph{decreasing-mass stochastic gradient-momentum process} (dmSGMP) is a solution of the following stochastic differential equation,
\begin{equation}\label{eq:AS:pqeiet}
\left\{ \begin{array}{rl}
\de \tilde q_t &= \tilde p_t \de t,\\
\e(t) \de \tilde p_t &= - \nabla \Phi_{i(t)}(\tilde q_t) \de t- \tilde p_t \de t, \\
\tilde p_0, \tilde q_0 &\in X,
\end{array} \right.
\end{equation}
where  $\Phi_j$ satisfies Assumption \ref{asSGPf}, $j = 1, \cdots, N$. The stochastic process $(i(t))_{t\ge0}$ is defined in Definition \ref{index}. The mass $\e(t)>0$ is strictly decreasing and differentiable with  $\lim_{t\to\infty}\e(t)=0.$  
\end{definition}
The formal limit of (\ref{eq:AS:pqeiet}) is the stochastic gradient process without momentum, as given in \eqref{eq:SGP_old}. As in the previous section, we start our discussion  considering the decreasing mass case with a fixed index, i.e.\  the following dynamical system:
\begin{equation}\label{eq:AS:pqeiei}
\left\{ \begin{array}{rl}
\de q^i_t &= p^i_t \de t,\\
\e(t) \de p^i_t &= - \nabla \Phi_i(q^i_t) \de t-  p^i_t \de t, \\
p^i_0, q^i_0 &\in X,
\end{array} \right.
\end{equation}
where the mass  $(\e(t))_{t \geq 0}$ is chosen as before. In addition, we denote $\mathcal{E}(t):=\int_0^t 1/\e(s)\de s.$ The associated limiting equation is given by \eqref{Eq_SGPC}.
Next, we discuss the well-posedness of the system  (\ref{eq:AS:pqeiei}). 

\subsubsection*{Well-posedness}
We study the well-posedness of the deterministic dynamical system \eqref{eq:AS:pqeiei}. Well-posedness of the stochastic dynamical system \eqref{eq:AS:pqeiet} then follows from, e.g.\ Proposition~1 in \cite{Jonas}.
\begin{proposition}
For any fixed $i\in I$,  let $\nabla\Phi_i$ satisfy Assumption \ref{asSGPf} with constant $L$ and $\e(t)$ satisfy Assumption \ref{as1.21}. Then  \eqref{eq:AS:pqeiei} admits a unique solution.
\end{proposition}

\begin{proof}
We first rewrite equation \eqref{eq:AS:pqeiei} as
\begin{equation*} 
\left\{ \begin{array}{rl}
\de q^i_t &= p^i_t\de t,\\
 \de p^i_t &= - \nabla \Phi_i(q^i_t)/\e(t)\de t-  p^i_t/\e(t)\de t, \\
p^i_0, q^i_0 &\in X.
\end{array} \right.
\end{equation*}
We define a map $\mathcal{T}_0: \mathcal{C}([0,t_1],\R)\times \mathcal{C}([0,t_1],\R) \to \mathcal{C}([0,t_1],\R)\times \mathcal{C}([0,t_1],\R),$ as
\begin{align*}
    \mathcal{T}_0(x,y)= \Big(q_0+\int_0^t y_s\de s,\ p_0- \int_0^t \nabla \Phi_i(x_s)/\e(s)\de s-  \int_0^ty_s/\e(s)\de s\Big)
\end{align*} 
where $0\le t\le t_1.$
Next, we show that $\mathcal{T}_0$ is a contraction for $t_1$ small enough. Since $\e(t)$ is strictly decreasing and positive and  using Assumption \ref{as1.21}, we can conclude that 
\begin{align*}
   (e^{\lambda t} \e(t))'\ge 0\ \ \ \ \text{and} \ \ \ \ \e(t)\ge \e_0e^{-\lambda t}.
\end{align*}
We then conclude 
\begin{align*}
    \norm{\mathcal{T}_0(q,p)-\mathcal{T}_0(\hat q,\hat p)}_{\infty}
    &\le t_1\norm{p-\hat p}_\infty+ L\norm{q-\hat q}_{\infty}\int_0^{t_1}\frac{1}{\e(s)}\de s+\norm{p-\hat p}_\infty\int_0^{t_1}\frac{1}{\e(s)}\de s\\
    &\le (\norm{q-\hat q}_{\infty}+\norm{p-\hat p}_\infty)\frac{L+1}{\lambda \e_0}(e^{\lambda t_1}-1+t_1).
\end{align*}
Set  $t_1=\frac{\lambda \e_0}{2(L+1)(2\lambda+1)}=:C(\lambda, \e_0,L)$, and since $e^{\lambda x}-1\le 2\lambda x$ when $\lambda x\le 1,$ we have
\begin{align*}
    \frac{L+1}{\lambda \e_0}(e^{\lambda t_1}-1+t_1)\le \frac{L+1}{\lambda \e_0}(2\lambda+1)t_1\le \frac{1}{2}.
\end{align*}
Then $\mathcal{T}_0$ is contracting with constant smaller than $1/2.$ By the Banach Fixed Point Theorem, we conclude that (\ref{eq:AS:pqeiei}) admits a unique solution on the interval $[0,t_1]$ with initial value $(q_0,p_0).$ Let $\mathcal{T}_n: \mathcal{C}([t_n,t_{n+1}],\R)\times \mathcal{C}([t_n,t_{n+1}],\R)\to  \mathcal{C}([t_n,t_{n+1}],\R)\times \mathcal{C}([t_n,t_{n+1}],\R),$  with
\begin{align*}
    \mathcal{T}_n(x,y)= \Big(q_{t_n}+\int_{t_n}^{t} y_s/\e(s)\de s,\ p_{t_n}- \int_{t_n}^{t} \nabla \Phi_i(x_s)/\e(s)\de s-  \int_{t_n}^{t}y_s/\e(s)\de s\Big).
\end{align*}
Then we have
\begin{align*}
    \norm{\mathcal{T}_n(q,p)-\mathcal{T}_n(\hat q,\hat p)}_{\infty}
    &\le (t_{n+1}-t_n)\norm{p-\hat p}_\infty+ L\norm{q-\hat q}_{\infty}\int_{t_n}^{t_{n+1}}\frac{1}{\e(s)}\de s\\
    & \quad +\norm{p-\hat p}_\infty\int_{t_n}^{t_{n+1}}\frac{1}{\e(s)}\de s\\
    &\le (\norm{q-\hat q}+\norm{p-\hat p}_\infty)\frac{L+1}{\lambda \e_0}e^{\lambda t_n}(e^{\lambda h_n}-1+h_n),
\end{align*}
where $h_n=t_{n+1}-t_n.$ Set $h_n=C(\lambda, \e_0,L) e^{-\lambda t_n}=\frac{\lambda \e_0 e^{-\lambda t_n}}{2(L+1)(2\lambda+1)}$, then 
\begin{align*}
    \frac{L+1}{\lambda \e_0}e^{\lambda t_n}(e^{\lambda h_n}-1+h_n)\le  \frac{L+1}{\lambda \e_0}e^{\lambda t_n}(2\lambda+1)h_n\le \frac{1}{2}.
\end{align*}
This implies $\mathcal{T}_n$ is a contraction, and by the Banach Fixed Point Theorem, we conclude that (\ref{eq:AS:pqeiei}) admits a unique solution on the interval $[t_n,t_{n+1}]$ with initial value $(\tilde q_{t_n},\tilde p_{t_n})$. Therefore, by Lemma \ref{tn}, we have $t_n\to\infty,$ and, thus, (\ref{eq:AS:pqeiei}) admits a unique solution globally.
\end{proof}

As the unique solution to the dynamical system \eqref{eq:AS:pqeiei} is a strong solution, the solution is continuously differentiable. Thus,  the map $(p^i_0, p^i_0) \mapsto (p^i_t, p^i_t)$ is continuous for any $t \geq 0$. Hence, the differential equations \eqref{eq:AS:pqeiet} and  \eqref{eq:AS:pqeiei} are well-posed.

\subsubsection*{Longtime behavior} Having understood the well-posedness of dmSGMP, we are now interested in its longtime behaviour. By reducing the mass over time, we aim to reduce the momentum and {prove convergence} to the stochastic gradient process \eqref{eq:SGP_old}.

To show this convergence result, we now collect a number of auxiliary results that mainly concern the deterministic system  \eqref{eq:AS:pqeiei} that considers a single, fixed sample. Some of these results may remind the reader of similar results shown in Appendix~\ref{appendix_decreasing_momentum} regarding the fixed sample, homogeneous momentum dynamics; we make these connections clear in the titles of the following results. The intermediate goal is to show a bound between the system \eqref{eq:AS:pqeiei} and its limiting equation \eqref{Eq_SGPC}. From there we then go back to the randomised setting and discuss the longtime behaviour and convergence of dmSGMP.
We first show that the deterministic system converges  at exponential speed to its unique stationary point.

\begin{lemma}[cf. Lemma~\ref{lem:dqi}]\label{lyp:cor} 
{Let $(p^{i}_t,q^{i}_t)_{t \geq 0}$ solve (\ref{eq:AS:pqeiei}). Let $\theta_*^i$ be the critical point of $\Phi_i$ for $i\in I$. Under the Assumptions \ref{asSGPf}, \ref{as1.2} with $\mathcal{A}^{\lambda_i, 2}$, and \ref{as1.21} with $\lambda=\min_{i\in I}\lambda_i$}, we have
\begin{align}\label{Lemma4}
    \norm{q^i_t-\theta^i_*}^2\le 8e^{-\lambda t}V^i(0,q^i_0,p^i_0)  \qquad (t \geq 0),
\end{align}
where $V^i(t,x,y)= \e(t)(\Phi_i(x)-\Phi_i(\theta^i_*))+ \frac{1}{4}\Big(\norm{x-\theta^i_*+\e(t)y}^2+\norm{\e(t)y}^2\Big).$
\end{lemma}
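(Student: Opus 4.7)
The plan is to adapt the Lyapunov-function argument from Proposition~\ref{th2} and Corollary~\ref{cor_det_conv} to the time-dependent-mass setting. The strategy has three parts: (i) establish the differential inequality $\frac{d}{dt}V^i(t, q^i_t, p^i_t) \leq -\lambda V^i(t, q^i_t, p^i_t)$, (ii) deduce the exponential decay $V^i(t, q^i_t, p^i_t) \leq e^{-\lambda t} V^i(0, q^i_0, p^i_0)$ by Gr\"onwall, and (iii) dominate $\norm{q^i_t - \theta^i_*}^2$ pointwise by $8V^i(t, q^i_t, p^i_t)$. Chaining these three facts yields the claim.

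First I would expand
\[
V^i(t,x,y) = \e(t)[\Phi_i(x)-\Phi_i(\theta^i_*)] + \tfrac14\norm{x-\theta^i_*}^2 + \tfrac{\e(t)}{2}(x-\theta^i_*)\cdot y + \tfrac{\e(t)^2}{2}\norm{y}^2
\]
and compute $\frac{d}{dt}V^i(t,q^i_t,p^i_t)$, splitting into $\partial_t V^i$ (producing terms proportional to $\e'(t)$) and the usual chain-rule contribution $\nabla_x V^i\cdot p^i_t+\nabla_y V^i\cdot \dot p^i_t$. Just as in the proof of Proposition~\ref{th2}, the cross terms involving $\e\nabla\Phi_i\cdot p^i_t$ and $(q^i_t-\theta^i_*)\cdot p^i_t$ cancel, leaving
\[
\tfrac{d}{dt}V^i = \e'[\Phi_i(q^i_t){-}\Phi_i(\theta^i_*)] + \tfrac{\e'}{2}(q^i_t{-}\theta^i_*)\cdot p^i_t + \e\big(\e'{-}\tfrac12\big)\norm{p^i_t}^2 - \tfrac12(q^i_t{-}\theta^i_*)\cdot\nabla\Phi_i(q^i_t).
\]
Applying Assumption~\ref{as1.2} with $\alpha=2$ to the last term gives the upper bound $-\lambda[\Phi_i(q^i_t)-\Phi_i(\theta^i_*)+\norm{q^i_t-\theta^i_*}^2]$.

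Second, comparing $\frac{d}{dt}V^i$ to $-\lambda V^i$, each resulting coefficient must be shown to have the right sign. The pointwise control $-\lambda\e(t)\leq \e'(t)\leq 0$ from Assumption~\ref{as1.21} together with $\e(t)\leq \e_0\leq 1$ and $\lambda\leq 1/4$ handles the $\Phi_i$- and $\norm{p^i_t}^2$-coefficients. The lone indefinite cross term $\tfrac{\e'+\lambda\e}{2}(q^i_t-\theta^i_*)\cdot p^i_t$ is absorbed by the weighted Young inequality $|\e'+\lambda\e|\,\norm{q^i_t-\theta^i_*}\,\norm{p^i_t}\leq \tfrac{\lambda}{2}\norm{q^i_t-\theta^i_*}^2+\tfrac{\lambda\e^2}{2}\norm{p^i_t}^2$ (using $|\e'+\lambda\e|\leq \lambda\e$). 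The arithmetic then delivers $\frac{d}{dt}V^i + \lambda V^i\leq 0$, and Gr\"onwall yields the claimed exponential decay of $V^i$.

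Third, for the pointwise lower bound on $V^i$, the triangle inequality $\norm{x-\theta^i_*}\leq \norm{x-\theta^i_*+\e(t)y}+\norm{\e(t)y}$ gives $\norm{x-\theta^i_*}^2\leq 2\bigl(\norm{x-\theta^i_*+\e(t)y}^2+\norm{\e(t)y}^2\bigr)$, so together with $\Phi_i(x)\geq \Phi_i(\theta^i_*)$ we get $V^i(t,x,y)\geq \tfrac18\norm{x-\theta^i_*}^2$. Combining with the exponential decay of $V^i$ gives \eqref{Lemma4}. The main technical obstacle is the bookkeeping in the second step: one must verify that, after inserting the assumption-based bounds and performing Young's inequality, every leftover coefficient of $\norm{q^i_t-\theta^i_*}^2$, $\norm{p^i_t}^2$, and $\Phi_i(q^i_t)-\Phi_i(\theta^i_*)$ is nonpositive. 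The conditions $\lambda\leq 1/4$ and $\e_0\leq 1$ are precisely what make this delicate balance work.
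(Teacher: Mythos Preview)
Your proposal is correct and follows essentially the same approach as the paper. The paper's proof is a one-liner invoking Lemma~\ref{lyap:witht} (in the appendix), whose proof is precisely the differential-inequality-plus-Gr\"onwall computation you outline; the only cosmetic difference is that the paper applies Young's inequality to the cross term $\tfrac{\e'}{2}(q^i_t-\theta^i_*)\cdot p^i_t$ \emph{before} comparing to $-\lambda V^i$, whereas you first form $\tfrac{d}{dt}V^i+\lambda V^i$ and then bound the combined cross term $\tfrac{\e'+\lambda\e}{2}(q^i_t-\theta^i_*)\cdot p^i_t$.
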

\begin{proof}
By Lemma \ref{lyap:witht},  we have
     $\norm{q^i_t-\theta^i_*}^2\le 8V^i(t,q^i_t,p^i_t)\le 8e^{-\lambda t}V^i(0,q^i_0,p^i_0).$
\end{proof}
In the next auxiliary result, we show Lipschitz continuity of the position $(q_t^i)_{t \geq 0}$ and boundedness of the velocity $(p_t^i)_{t \geq 0}$.
\begin{lemma}[cf. Lemma~\ref{lem:bound+Lip}]\label{lem:second p}
{Let $(p^{i}_t,q^{i}_t)_{t \geq 0}$ solve (\ref{eq:AS:pqeiei}). Let $\theta_*^i$ be the critical point of $\Phi_i$ for $i\in I$. Under the Assumptions \ref{asSGPf}, \ref{as1.2} with $\mathcal{A}^{\lambda_i, 2}$, and \ref{as1.21} with $\lambda=\min_{i\in I}\lambda_i$}, for any $0\le s\le t,$ we have 
\begin{align*}
    \norm{q^i_t-q^i_s}&\le  C^{(1)}_L(t-s)\Big(\norm{q^i_0-\theta^i_*}+(\e_0+e^{-\mathcal{E}(s)})\norm{p^i_0}\Big),\\
    \norm{p^i_t}&\le e^{-\mathcal{E}(t)}\norm{p^i_0}+ C^{(1)}_L(\norm{q^i_0-\theta^i_*}+\e_0\norm{p^i_0})  \qquad (t \geq 0),
\end{align*}
where $ C^{(1)}_L=1+8L^2+8L$.
\end{lemma}
\begin{proof}
From (\ref{eq:AS:pqeiei}), we have 
\begin{align*}
    \frac{\de (e^{\mathcal{E}(t)}p^i_t)}{\de t}= -e^{\mathcal{E}(t)}\nabla \Phi_i(q^i_t)/\e(t),
\end{align*}
which implies 
\begin{align}\label{pqeie}
    p^i_t= e^{-\mathcal{E}(t)}p^i_0-e^{-\mathcal{E}(t)} \int_0^te^{\mathcal{E}(s)}\nabla \Phi_i(q^i_s)/\e(s)\de s.
\end{align}
Hence,
\begin{align*}
    \norm{p^i_t}\le&  e^{-\mathcal{E}(t)}\norm{p^i_0}+e^{-\mathcal{E}(t)} \int_0^t\frac{e^{\mathcal{E}(s)}}{\e(s)}\norm{\nabla \Phi_i(q^i_s)}\de s\\
    =& e^{-\mathcal{E}(t)}\norm{p^i_0}+e^{-\mathcal{E}(t)} \int_0^t\frac{e^{\mathcal{E}(s)}}{\e(s)}\norm{\nabla \Phi_i(q^i_s)-\nabla \Phi_i(\theta^i_*)}\de s\\
    \le& e^{-\mathcal{E}(t)}\norm{p^i_0}+Le^{-\mathcal{E}(t)} \int_0^t\frac{e^{\mathcal{E}(s)}}{\e(s)}\norm{q^i_s-\theta^i_*}\de s\\
    \overset{(\ref{Lemma4})}{\le}& e^{-\mathcal{E}(t)}\norm{p^i_0}+8Le^{-\mathcal{E}(t)}(V^i(0,q^i_0,p^i_0))^{\frac{1}{2}} \int_0^t\frac{e^{\mathcal{E}(s)}}{\e(s)}e^{\frac{-\lambda s}{2} }\de s\\
    \le& e^{-\mathcal{E}(t)}\norm{p^i_0}+8L(V^i(0,q^i_0,p^i_0))^{\frac{1}{2}}.
\end{align*}
{The last inequality holds since $\int_0^t\frac{e^{\mathcal{E}(s)}}{\e(s)}e^{\frac{-\lambda s}{2} }\de s\le e^{\mathcal{E}(t)}$, which can be seen by  taking the derivative on both sides and comparing the initial values.}
And since $V^i(0,q^i_0,p^i_0)\le (1+L)(\norm{q^i_0-\theta^i_*}^2+\e^2_0\norm{p^i_0}^2),$ we have 
\begin{align*}
    \norm{p^i_t}\le e^{-\mathcal{E}(t)}\norm{p^i_0}+(1+8L^2+8L)(\norm{q^i_0-\theta^i_*}+\e_0\norm{p^i_0}). 
\end{align*}
We set $C^{(1)}_L=(1+8L^2+8L)$ and have 
\begin{align*}
    \norm{q^i_t-q^i_s}
    &\le \int_s^t\norm{p^i_m}\de m 
    \le C^{(1)}_L(t-s)(\norm{q^i_0-\theta^i_*}+\e_0\norm{p^i_0})+\norm{p^i_0}\int_s^te^{-\mathcal{E}(x)}\de x\\
    &\le C^{(1)}_L(t-s)\left(\norm{q^i_0-\theta^i_*}+(\e_0+e^{-\mathcal{E}(s)})\norm{p^i_0}\right).
\end{align*}
\end{proof}

In the third lemma, we give a bound on the system's acceleration $({\de p^i_t}/{\de t})_{t \geq 0}$.
\begin{lemma}[cf. Lemma~\ref{lem:bound_Accel}]\label{lemmaine1}
{Let $(p^{i}_t,q^{i}_t)_{t \geq 0}$ solve (\ref{eq:AS:pqeiei}). Let $\theta_*^i$ be the critical point of $\Phi_i$ for $i\in I$. Under the Assumptions \ref{asSGPf}, \ref{as1.2} with $\mathcal{A}^{\lambda_i, 2}$, and \ref{as1.21} with $\lambda=\min_{i\in I}\lambda_i$}, we have
\begin{align}\label{Lemma6}
    \norm{\frac{\de p^i_t}{\de t}}\le C^{(2)}_L \left(\frac{ e^{-\mathcal{E}(t)}}{\e(t)}+2\right)(\norm{p^i_0}+\norm{q^i_0-\theta^i_*})  \qquad (t \geq 0),
\end{align}
where $ C^{(2)}_L=8(C^{(1)}_L)^2=8(1+8L^2+8L)^2$. 
\end{lemma}

\begin{proof}
 From (\ref{pqeie}), we have
\begin{align*}
   \frac{\de p^i_t}{\de t}= \frac{-e^{-\mathcal{E}(t)}p^i_0}{\e(t)}-\frac{\nabla \Phi_i(q^i_t)}{\e(t)}+\frac{e^{-\mathcal{E}(t)}}{\e(t)}\int_0^te^{\mathcal{E}(s)}\nabla \Phi_i(q^i_s)/\e(s)\de s.
\end{align*}
Notice that
\begin{align*}
    \frac{\nabla \Phi_i(q^i_t)}{\e(t)}=\frac{e^{-\mathcal{E}(t)}}{\e(t)}\int_0^te^{\mathcal{E}(s)}\nabla \Phi_i(q^i_t)/\e(s)\de s +\frac{e^{-\mathcal{E}(t)}\nabla \Phi_i(q^i_t)}{\e(t)}.
\end{align*}
Then, we can rewrite $\frac{\de p^i_t}{\de t}$ as
\begin{align*}
    \frac{\de p^i_t}{\de t}= \frac{-e^{-\mathcal{E}(t)}(p^i_0+\nabla \Phi_i(q^i_t))}{\e(t)}+\frac{e^{-\mathcal{E}(t)}}{\e(t)}\int_0^te^{\mathcal{E}(s)}(\nabla \Phi_i(q^i_s)-\nabla \Phi_i(q^i_t))/\e(s)\de s.
\end{align*}
Hence,
\begin{align*}
    \norm{\frac{\de p^i_t}{\de t}}
    &\le \frac{e^{-\mathcal{E}(t)}}{\e(t)}\Big(\norm{p^i_0+\nabla \Phi_i(q^i_t)-\nabla \Phi_i(\theta^i_*)}+\int_0^te^{\mathcal{E}(s)}\norm{\nabla \Phi_i(q^i_s)-\nabla \Phi_i(q^i_t)}/\e(s)\de s\Big)\\
    &\le \frac{(1+L) e^{-\mathcal{E}(t)}}{\e(t)}\Big(\norm{p^i_0}+\norm{q^i_t-\theta^i_*}+\int_0^te^{\mathcal{E}(s)}\norm{q^i_s-q^i_t}/\e(s)\de s\Big)\\
    &\underbrace{\le}_{(a_2)}  \frac{4C^{(1)}_L e^{-\mathcal{E}(t)}}{\e(t)}\Big(\norm{q^i_0-\theta^i_*}+\norm{p^i_0} \\ & \qquad \qquad + C^{(1)}_L\int_0^t(t-s)e^{\mathcal{E}(s)}/\e(s)\de s\Big(\norm{q^i_0-\theta^i_*}+(\e_0+e^{-\mathcal{E}(s)})\norm{p^i_0}\Big)\Big)\\
    &= \frac{4C^{(1)}_L e^{-\mathcal{E}(t)}}{\e(t)}\norm{q^i_0-\theta^i_*}\Big(1+C^{(1)}_L\int_0^t(t-s)e^{\mathcal{E}(s)}/\e(s)\de s\Big)\\
    &+ \frac{4C^{(1)}_L e^{-\mathcal{E}(t)}}{\e(t)}\norm{p^i_0}\Big(1+C^{(1)}_L\int_0^t(t-s)e^{\mathcal{E}(s)}(\e_0+e^{-\mathcal{E}(s)})/\e(s)\de s\Big)\\
    &\le 4C^{(1)}_L\Big(\frac{ e^{-\mathcal{E}(t)}}{\e(t)}+2C^{(1)}_L\Big)\norm{q^i_0-\theta^i_*}+4C^{(1)}_L\Big(\frac{ e^{-\mathcal{E}(t)}}{\e(t)}+4C^{(1)}_L\Big)\norm{p^i_0},
\end{align*}
where the last step follows since
\begin{align*}
    \int_0^t(t-s)e^{\mathcal{E}(s)}/\e(s)\de s
   \le 2\e(t)e^{\mathcal{E}(t)}
\end{align*}
{and the inequality $(a_2)$ follows from Lemmas \ref{lyp:cor} and \ref{lem:second p} and the fact that $V^i(0,q^i_0,p^i_0)\le (1+L)(\norm{q^i_0-\theta^i_*}^2+\e^2_0\norm{p^i_0}^2).$}
Hence, 
\begin{align*}
     \norm{\frac{\de p^i_t}{\de t}}
    &\le 4C^{(1)}_L \Big(\frac{ e^{-\mathcal{E}(t)}}{\e(t)}+4C^{(1)}_L\Big)(\norm{p^i_0}+\norm{q^i_0-\theta^i_*})\\
    &\le 8(C^{(1)}_L)^2 \Big(\frac{ e^{-\mathcal{E}(t)}}{\e(t)}+2\Big)(\norm{p^i_0}+\norm{q^i_0-\theta^i_*}).
    \end{align*}
    We set $C^{(2)}_L=8(C^{(1)}_L)^2=8(1+8L^2+8L)^2,$ which completes the proof.
\end{proof}

We {have now attained} the aforementioned intermediate goal where we show a bound between the system \eqref{eq:AS:pqeiei} and its limiting equation \eqref{Eq_SGPC}. We need to additionally ask for some underlying convexity; strong convexity in this case: {  Indeed, we aim to show that $q_{\tau_n}-\theta_{\tau_n}$ is contractive. Assumption \ref{as1.2} would only lead to contractivity of $q_{\tau_n}-\theta_*$.}

\begin{proposition}[cf. Proposition~\ref{inductionlem}]\label{inductionlem2}
Let $(p^{i}_t,q^{i}_t)_{t \geq 0}$ solve (\ref{eq:AS:pqeiei}). Let $\theta_*^i$ be the critical point of $\Phi_i$ for $i\in I$. We assume $\Phi_i$ is strongly convex with constant $\mu$ and let $\Phi_i$ satisfy Assumption \ref{asSGPf} and $(\e(t))_{t \geq 0}$ satisfy Assumption \ref{as1.21}  with $\lambda=(\mu/4)\land (1/4)$. Then we have
\begin{align*}
\norm{q^i_t-\theta^i_t}\le e^{-\mu t/2 }\norm{q^i_0-\theta^i_0}+C^{(1)}_{L,\mu} \e^{\frac{1}{2}}_0  \Big(\norm{p^i_0}+\norm{q^i_0-\theta^i_*}\Big)  \qquad (t \geq 0),
\end{align*}
where $C^{(1)}_{L,\mu}=2C^{(2)}_L\mu^{-1/2}(1+\mu^{-1/2}+\mu^{1/2})+\mu^{1/2},$ { which is larger than $\mu^{1/2}.$}
\end{proposition}
\begin{proof}
By Lemma \ref{lemmaine1} and Lemma \ref{lem:conx}, we have
\begin{align*}
    \frac{1}{2}\frac{\de \norm{q^i_t-\theta^i_t}^2}{\de t}&= \ip{q^i_t-\theta^i_t}{\frac{\de q^i_t}{\de t}-\frac{\de \theta^i_t}{\de t}}\\
    &=-\ip{q^i_t-\theta^i_t}{\nabla \Phi_i(q^i_t)-\nabla \Phi_i(\theta^i_t) }-\e(t)\ip{q^i_t-\theta^i_t}{\frac{\de p^i_t}{\de t}}\\
    &\le -\mu \norm{q^i_t-\theta^i_t}^2+\frac{\mu}{2}\norm{q^i_t-\theta^i_t}^2+\frac{\e^2(t)}{2\mu}\norm{\frac{\de p^i_t}{\de t}}^2\\
    &\overset{(\ref{Lemma6})}{\le} -\frac{\mu}{2}\norm{q^i_t-\theta^i_t}^2+(C^{(2)}_L)^2(\mu)^{-1}\e^2(t)\Big(\frac{ e^{-2\mathcal{E}(t)}}{\e^2(t)}+4\Big)\Big(\norm{p^i_0}^2+\norm{q^i_0-\theta^i_*}^2\Big)\\
    &= -\frac{\mu}{2}\norm{q^i_t-\theta^i_t}^2+(2C^{(2)}_L)^2(\mu)^{-1}\Big( e^{-2\mathcal{E}(t)}+\e^2(t)\Big)\Big(\norm{p^i_0}^2+\norm{q^i_0-\theta^i_*}^2\Big),
\end{align*}
which implies 
\begin{align*}
    \norm{q^i_t-\theta^i_t}^2&\le  e^{-\mu t}\norm{q^i_0-\theta^i_0}^2\\
    &\ \ \ \ \ +(C^{(2)}_L )^2(\mu)^{-1} e^{-\mu t}\int_0^t\Big(e^{-2\mathcal{E}(s)}+\e^2(s)\Big)e^{\mu s}\de s  \Big(\norm{p^i_0}^2+\norm{q^i_0-\theta^i_*}^2\Big).
\end{align*}
From Lemma \ref{epsilong1}, we know that 
$e^{-\mu t}\int_0^t\e^2(s)e^{\mu s}\de s\le 2\mu^{-1}\e^2(t)\le 2\mu^{-1}\e^2_0\le 2\mu^{-1}\e_0,  $ and 
$ e^{-\mu t}\int_0^te^{-2\mathcal{E}(s)}e^{\mu s}\de s\le (1+\mu)\e_0$, which implies
\begin{align*}
    \norm{q^i_t-\theta^i_t}^2\le e^{-\mu t}\norm{q^i_0-\theta^i_0}^2+(C^{(1)}_{L,\mu} )^2 \e_0  \Big(\norm{p^i_0}^2+\norm{q^i_0-\theta^i_*}^2\Big).
\end{align*}

\end{proof}
From Lemma \ref{lem:second p} and Proposition \ref{inductionlem2}, we immediately have the following corollary.
\begin{corollary}\label{corollary noprove}
Under the same condition as Proposition \ref{inductionlem2}. For $0< \e_0 \le 1$ and any fixed $i\in I,$ we have
\begin{align*}
\norm{q^i_t-\theta^i_t}&\le e^{-\mu t/2 }\norm{q^i_0-\theta^i_0}+ C^{(1)}_{L,\mu} \e_0 \Big(\norm{p^i_0}+\norm{q^i_0-\theta^i_0}+\norm{\theta^i_0-\theta^i_*}\Big),\\
 \norm{p^i_t}&\le (e^{-\mathcal{E}(t)}+C^{(1)}_L\e_0)\norm{p^i_0}+C^{(1)}_L  \Big(\norm{q^i_0-\theta^i_0}+\norm{\theta^i_0-\theta^i_*}\Big)  \qquad (t \geq 0).
\end{align*}
\end{corollary}

We can now show the main statement of this section. The convergence of dmSGMP to the stochastic gradient process in the longtime limit. Importantly, we assume a coupling between the processes through the index process $(i(t))_{t \geq 0}$ that is identical in both dynamical systems. Throughout this section, we collected evidence for a contractive behaviour in between the deterministic processes  \eqref{eq:AS:pqeiei} and \eqref{Eq_SGPC}. We now use these results to study the randomised version piece-by-piece. This, of course, is a very similar strategy to that used in Theorem~\ref{second th}.

\begin{theorem}\label{thm:dm2p} {Let $(\tilde p_t,\tilde q_t)_{t \geq 0}$ and $(\theta_t)_{t\geq0}$ solve (\ref{eq:AS:pqeiet}) and (\ref{eq:SGP_old}), respectively. Let $\theta_*^i$ be the critical point of $\Phi_i$ for $i\in I$. We assume that $\Phi_i$ is strongly convex with constant $\mu$ and let $\Phi_i$ satisfy Assumption \ref{asSGPf} and $\e(t)$ satisfy Assumption \ref{as1.21}  with $\lambda=(\mu/4)\land (1/4)$} for $i \in I$. Let $\{\tau_n\}_{n\ge 1}$ be the sequences of the jump times of the process $(i(t))_{t \geq 0}$. If we assume that $\e_0\le ${$\frac{(C^{(2)}_{L,\mu})^{-2}\mu^2}{4(2\gamma N+\mu)^2}$}, where $C^{(2)}_{L,\mu}=C^{(1)}_{L,\mu}+ C^{(1)}_L$, then
$$\E\Big[\norm{\tilde q_{\tau_n}-\theta_{\tau_n}}  \Big]\to 0 \ \ \ \ (n \rightarrow \infty).$$
 Furthermore, if we additionally assume that $\e$ decays at least exponentially, i.e.\  there exist $C,c>0$ such that $\e(t)\le Ce^{-ct},$ we have
 \begin{align*}
     \norm{\tilde q_t-\theta_t}\to 0\ \ \ \ (t \rightarrow \infty)
 \end{align*}
  almost surely and in expectation. 
\end{theorem}

\begin{remark}
After the statement of Assumption \ref{as1.21}, we have concluded that $m(t) \geq m_0 e^{-\lambda t}$. Thus, the constant $c$ mentioned above needs to be smaller than $\lambda$.
\end{remark}

\begin{proof}
  For any $n\ge 0,$ by Corollary \ref{corollary noprove}, and since the initial value of $\e(t)$ in the interval $[\tau_n,\tau_{n+1})$ is exactly $\e(\tau_n)$, we have
\begin{align*}
    \norm{\tilde q_{\tau_{n+1}}-\theta_{\tau_{n+1}}}&\le e^{-\mu (\tau_{n+1}-\tau_n)/2}\norm{\tilde q_{\tau_n}-\theta_{\tau_n}}\\
    &\ \ \ +C^{(1)}_{L,\mu} \e(\tau_n)\Big(\norm{\tilde q_{\tau_n}-\theta_{\tau_n}}+\norm{\tilde p_{\tau_n}}+\norm{\theta_{\tau_n}-\theta^{i(\tau_n)}_*}\Big) 
\end{align*}
and
\begin{align*}
    \norm{\tilde p_{\tau_{n+1}}} 
     \le \Big(e^{-\mathcal{E}(\tau_{n+1} - \tau_{n})}+C^{(1)}_L \e(\tau_{n})\Big)\norm{\tilde p^\e_{\tau_n}}
     +C^{(1)}_L\Big(\norm{\tilde q_{\tau_n}-\theta_{\tau_n}}+\norm{\theta_{\tau_n}-\theta^{i(\tau_n)}_*}\Big).
     \end{align*}
 Lemma \ref{strongthetabound} implies that
     \begin{align*}
        \norm{\theta_{\tau_n}-\theta^{i(\tau_n)}_*}\le \norm{\theta_0}+K_{\Phi}=K_{\Phi,\theta_0}.
     \end{align*}
We denote $a_n:=e^{-\mu (\tau_{n+1}-\tau_n)/2}$ and obtain
\begin{align*}
     \norm{\tilde q_{\tau_{n+1}}-\theta_{\tau_{n+1}}}   \le a_n\norm{\tilde q_{\tau_n}-\theta_{\tau_n}}
    +C^{(1)}_{L,\mu} \e(\tau_n)\Big(\norm{\tilde q_{\tau_n}-\theta_{\tau_n}}+\norm{\tilde p_{\tau_n}}+K_{\Phi,\theta_0}\Big)
\end{align*}
{ as well as $$e^{-\mathcal{E}(\tau_{n+1} - \tau_{n})}\le e^{- (\tau_{n+1}-\tau_n)/\e_0}\le e^{- (\tau_{n+1}-\tau_n)(C^{(2)}_{L,\mu})^2}\le e^{-\mu (\tau_{n+1}-\tau_n)/2}:=a_n.$$} Hence, we also have
\begin{align*}
    \norm{\tilde p_{\tau_{n+1}}}   \le 
   (a_n+C^{(1)}_L\e(\tau_n))\norm{\tilde p_{\tau_n}}
    +C^{(1)}_L \Big(\norm{\tilde q_{\tau_n}-\theta_{\tau_n}}+K_{\Phi,\theta_0}\Big).
\end{align*}
We denote 
$A_n:=\norm{\tilde q_{\tau_n}-\theta_{\tau_n}}   ,\ 
    B_n:=\norm{\tilde p_{\tau_n}}$
and then have the following iteration inequalities:
\begin{align*}
    A_{n+1}\le a_n A_n+C^{(1)}_{L,\mu} \e(\tau_n)\Big(A_n+B_n+K_{\Phi,\theta_0}\Big),\\
    B_{n+1}\le (a_n+C^{(1)}_L\e(\tau_n))B_n+C^{(1)}_L\Big(A_n+K_{\Phi,\theta_0}\Big).
\end{align*}
Moreover, denoting $D_n=A_n+\e^{\frac{1}{2}}(\tau_n) B_n$,  $C^{(2)}_{L,\mu}=C^{(1)}_{L,\mu}+ C^{(1)}_L$, we have
\begin{align}\label{Th3 D }
    D_{n+1}&\le (a_n+C^{(1)}_{L,\mu} \e(\tau_n))A_n+ \e^{\frac{1}{2}}(\tau_{n+1})(a_n+C^{(1)}_L\e(\tau_n))B_n+C^{(1)}_{L,\mu} \e(\tau_n)B_n\\
    &\ \ \ \ \ +\e^{\frac{1}{2}}(\tau_{n+1})C^{(1)}_LA_n\nonumber
   +C^{(1)}_{L,\mu} \e(\tau_n)K_{\Phi,\theta_0} +C^{(1)}_L \e^{\frac{1}{2}}(\tau_{n+1})K_{\Phi,\theta_0}\\
   & \le \Big(a_n+C^{(2)}_{L,\mu} \e^{\frac{1}{2}}(\tau_n)\Big) D_n+C^{(2)}_{L,\mu} \e^{\frac{1}{2}}(\tau_n)K_{\Phi,\theta_0}\nonumber\\
    &\le \Big(a_n+C^{(2)}_{L,\mu} \e^{\frac{1}{2}}_0\Big) D_n+C^{(2)}_{L,\mu} \e^{\frac{1}{2}}(\tau_n)K_{\Phi,\theta_0}
\end{align}
and since $\tau_{n+1}-\tau_n$ are exponentially distributed with parameter $\gamma N$   is independent of $\mathcal{F}_{\tau_n}$, we have
\begin{align}\label{iteration Dn}
   \E[ D_{n+1}]&\le \E[e^{-\mu (\tau_{n+1}-\tau_n)/2}+C^{(2)}_{L,\mu} \sqrt{\e_0}] \E[D_n]+C^{(2)}_{L,\mu} K_{\Phi,\theta_0}\E[ \e^{\frac{1}{2}}(\tau_n)],\\
   \E[ D^2_{n+1}]&\le \E[(e^{-\mu (\tau_{n+1}-\tau_n)/2}+C^{(2)}_{L,\mu} \sqrt{\e_0})^2] \E[D^2_n]+(C^{(2)}_{L,\mu} K_{\Phi,\theta_0})^2\E[ \e(\tau_n)]\\
   & \qquad \qquad\qquad\qquad \qquad\qquad\qquad\qquad+ C^{(3)}_{L,\mu}\sqrt{\e_0}K_{\Phi,\theta_0}\E[D_n]\nonumber,
\end{align}
where $C^{(3)}_{L,\mu}= 2C^{(2)}_{L,\mu}(1+C^{(2)}_{L,\mu}).$
Since $\e_0\le \frac{(C^{(2)}_{L,\mu})^{-2}\mu^2}{4(2\gamma N+\mu)^2},$ we have 
$$\E[e^{-\mu (\tau_{n+1}-\tau_n)/2}+C^{(2)}_{L,\mu}\sqrt{\e_0}]\le c_1,$$  
where $c_1=\frac{\mu+4\gamma N}{2(2\gamma N+\mu)}<1.$ From the iteration inequality and Lemma \ref{lem:iter}, we know that $\lim_{n\to\infty}\E[D_n]=0,$ which finish the proof of the first part.  
If we assume $\e(t)\le Ce^{-ct},$ we then have  
\begin{align*}
    \E[ \e^{\frac{1}{2}}(\tau_n)]\le C\E[e^{-\frac{c\tau_n}{2}}]= C\E\Big[\prod^n_{i=1}e^{-\frac{c(\tau_i-\tau_{i-1})}{2}}\Big]=C\prod^n_{i=1}\E\Big [e^{-\frac{c(\tau_i-\tau_{i-1})}{2}}\Big]= C(c_2)^n
\end{align*}
where $c_2:= \E\Big [e^{-\frac{c(\tau_i-\tau_{i-1})}{2}}\Big]=\frac{2\gamma N}{c+2\gamma N}$ which is a constant does not depend on $i$ and that is smaller than $1$. We then rewrite $(\ref{iteration Dn})$ as 
\begin{align*}
    \E[D_{n+1}]\le  c_1\E[D_n] + CC^{(3)}_{L,\mu}\sqrt{\e_0}K_{\Phi,\theta_0} (c_2)^n,
\end{align*}
which follows from the second part of Lemma \ref{lem:iter}. We then have $\E[D_n]\le \tilde Be^{-\tilde b n}.$ 
This also implies 
\begin{align*}
    \E[D^2_{n+1}]\le  c_3\E[D^2_n] + CC^{(3)}_{L,\mu}\sqrt{\e_0}K_{\Phi,\theta_0} (c_4)^n.
\end{align*}
Furthermore, by using Lemma \ref{lem:iter}, we have $\E[D^2_n]\le \tilde Be^{-\tilde b n}$ for some $\tilde B,\tilde b>0.$ The Markov inequality implies that  we have 
\begin{align*}
    \sum_{n=1}^{+\infty} \mathbb{P}(\abs{D_n}\ge \delta)\le   \delta^{-1}\sum_{n=1}^{+\infty}\E[D_n]\le  \delta^{-1}\sum_{n=1}^{+\infty}\E[D_n]\le  \delta^{-1}\tilde B\sum_{n=1}^{+\infty}e^{-\tilde b n}<+\infty,
\end{align*}
for any $\delta>0$.
This implies that $D_n\to 0$ almost surely as $n\to \infty,$ since $N_t\to \infty$ almost surely as $t\to \infty.$ We then have $D_{N_t}\to 0$  almost surely as $t\to \infty.$
By Corollary \ref{corollary noprove}, for any $t\ge 0,$  we have
\begin{align*}
    \norm{\tilde q_t-\theta_t}  
    &\le e^{-\mu (t-\tau_{N_t})/2}\norm{\tilde q_{\tau_{N_t}}-\theta_{\tau_{N_t}}}\\
    &\ \ \ \ \ +C^{(1)}_{L,\mu}  \e(\tau_{N_t})\Big(\norm{\tilde q_{\tau_{N_t}}-\theta_{\tau_{N_t}}}+\norm{\tilde p_{\tau_{N_t}}}+\norm{\theta_{\tau_{N_t}}-\theta^{i(\tau_{N_t})}_*}\Big) \\
    &\le (1+C^{(3)}_{L,\mu} )D_{N_t}+C^{(1)}_{L,\mu} \e(\tau_{N_t}) K_{\Phi,\theta_0}.
\end{align*}
This implies that $\norm{q_t-\theta_t} \rightarrow 0$ almost surely as $t\to \infty$ since $D_{N_t}\to 0 $ and $\e(\tau_{N_t})\to 0$  almost surely.

Also, in order to prove  $ \E[\norm{\tilde q_t-\theta_t}] \to 0,$ it is sufficient to show $\E[D_{N_t}]+\E[\e(\tau_{N_t})]\to 0$ as $t\to +\infty.$
Since
\begin{align*}
    \E[D^2_{N_t}]= \sum_{k=0}\E[D^2_k\indiq_{N_t=k}]^2\le \sum_{k=0}\E[D^2_k]<+\infty,
\end{align*}
which implies that $\{D_{N_t}\}_{t\ge 0}$ is uniformly integrable. Then, $\E[D_{N_t}]\to 0$ since $D_{N_t}\to 0$  almost surely.
Note that $\lim_{t\to +\infty} \e(\tau_{N_t})=0$. By the Dominated Convergence Theorem, we get 
\begin{align*}
    \lim_{t\to +\infty}  \E[\e(\tau_{N_t})]=0.
\end{align*}
This completes the proof.
\end{proof}

\subsection{Losing, both, momentum and randomness over time} \label{sec_losingboth}
In the previous subsection, we have seen that the stochastic gradient process with decreasing momentum approaches the stochastic gradient process. Asymptotically, the stochastic gradient process converges to a stationary distribution, not necessarily to a single point. 
As discussed before, to attain convergence to a single point, we usually  need to decrease the learning rate over time, see \cite{Jonas}.  We now study the following model.
\begin{definition}\label{def:DmSGPD}The \emph{decreasing-mass, decreasing-learning-rate stochastic gradient-momentum process (ddSGMP)}  is a solution of the following stochastic differential equation,
\begin{equation}\label{eq:AS:pqeiedb}
\left\{ \begin{array}{rl}
\de \hat q_t &= \hat p_t \de t,\\
\e(t) \de \hat p_t &= - \nabla \Phi_{i^\beta(t)}(\hat q_t)\de t- \hat p_t \de t, \\
\hat p_0, \hat q_0 &\in X,
\end{array} \right.
\end{equation}
where $\Phi_j$ satisfies Assumption \ref{asSGPf}, $j = 1, \ldots, N$. The mass $\e(t)>0$ is strictly decreasing and differentiable with  $\lim_{t\to\infty}\e(t)=0.$ The stochastic process $(i(t))_{t\ge0}$ is defined in Definition \ref{index}. For the re-scaled index process $(i^\beta(t))_{t\ge0}=(i(\beta(t)))_{t\ge0}$, we assume that $\beta(t) = \int_0^t \eta(s) \de s,$ $t \geq 0$ with  $\eta:[0,\infty)\to (0,\infty)$ being a non-decreasing continuously differentiable function.
\end{definition}
As opposed to the previous section, we do not consider the stochastic gradient process with constant learning rate as the limiting system, but actually the associated stochastic gradient process with \emph{decreasing} learning rate, denoted  by
\begin{equation} \label{Eq_SGPCxi}
\left\{ \begin{array}{rl}
    \de \xi_t &= - \nabla \Phi_{i^\beta(t)}(\xi_t) \de t,\\
    \xi_0 &\in X.
    \end{array} \right.
\end{equation}
Next, we define the sequence $\{\tau_n\}_{n\ge 1}$  to be the jump times of process $(i(t))_{t \geq 0}$ and  $\tau^\beta_n= \beta^{-1}(\tau_n)$, to be the jump times of  $(i^\beta(t))_{t \geq 0}$. We denote by 
\begin{align*}
    \Omega^\alpha_n:=\left\lbrace\frac{\mu}{2\eta(\tau^\beta_{n+1})}\ge \frac{\alpha_1}{\sqrt{n}} \  \textit{and}\ \e(\tau^\beta_n)\le \alpha_2 e^{-\alpha_3 \sqrt{n}} \right\rbrace,
\end{align*}
for $n \in \mathbb{N}$ an event that is used to impose a growth condition on $\beta$ and $m$. Regarding this event, we now denote the following assumption.
\begin{assumption}\label{comass}
For $n\ge k$, let $W^{\alpha,n}_k= \bigcap^n_{i=k}  \Omega^\alpha_i $. There exist $\alpha_1,\alpha_2,\alpha_3>0$ such that
$\lim_{k\to+\infty} \mathbb{P}(W^{\alpha,\infty}_k)=1.$
\end{assumption}
The event $W^{\alpha,n}_k$ is increasing in $k$ and decreasing in $n.$ Assumption \ref{comass} implies that the complement of $\Omega^\alpha_n$ is eventually small. We describe a setting for which this assumption holds in Example~\ref{Example_commass} below, after stating and proving the main results of this section: the convergence of ddSGMP to the stochastic gradient process with decreasing learning rate and the convergence of ddSGMP to the minimiser of the target function. In neither of these cases, we obtain a convergence rate. We later study the speed of convergence through numerical experiments in Section~\ref{sec:appl}. But now we start with the first of the two aforementioned results.

\begin{theorem}\label{theorem: demassderate} {Let $(\hat p_t,\hat q_t)_{t \geq 0}$ and $(\xi_t)_{t\geq0}$ solve (\ref{eq:AS:pqeiedb}) and (\ref{Eq_SGPCxi}), respectively.}
For $i\in I$, we assume that $\Phi_i$ is strongly convex with constant $\mu$ and let $\Phi_i$ satisfy Assumption \ref{asSGPf}. Let $(\e(t))_{t \geq 0}$ satisfy Assumption \ref{as1.21} with $\lambda=(\mu/4)\land (1/4).$ In addition, we assume that $(\e(t))_{t \geq 0}$ and $(\beta(t))_{t \geq 0}$ satisfy Assumption \ref{comass}. Then, we have  
$$\norm{\hat q_t-\xi_t}  \to 0$$
almost surely, as $t\to\infty$.
\end{theorem}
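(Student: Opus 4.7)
The plan is to adapt the strategy of the proof of Theorem~\ref{second th} (more precisely its decreasing-mass counterpart from Subsection~\ref{sec_losingmomentum}) to the setting in which both the mass and the learning rate decay. Since the index process $(i^\beta(t))_{t\ge 0}$ is piecewise constant on the intervals $[\tau^\beta_n,\tau^\beta_{n+1}]$, Corollary~\ref{corollary noprove} applies on each such interval with initial data $(q_{\tau^\beta_n},p_{\tau^\beta_n})$ and initial mass $\e(\tau^\beta_n)$. Writing $A_n:=\norm{q_{\tau^\beta_n}-\xi_{\tau^\beta_n}}$, $B_n:=\norm{p_{\tau^\beta_n}}$, and bounding $\norm{\xi_{\tau^\beta_n}-\theta_*^{i(\tau^\beta_n)}}\le K_{\Phi,\theta_0}$ via Lemma~\ref{strongthetabound}, I obtain
\begin{align*}
A_{n+1} &\le a_n A_n + C^{(1)}_{L,\kappa}\sqrt{\e(\tau^\beta_n)}\bigl(A_n+B_n+K_{\Phi,\theta_0}\bigr),\\
B_{n+1} &\le \bigl(a_n+C^{(1)}_L\e(\tau^\beta_n)\bigr)B_n + C^{(1)}_L\bigl(A_n+K_{\Phi,\theta_0}\bigr),
\end{align*}
with $a_n:=e^{-\kappa(\tau^\beta_{n+1}-\tau^\beta_n)/2}$, where I also use $e^{-\mathcal{E}(\tau^\beta_{n+1})+\mathcal{E}(\tau^\beta_n)}\le a_n$ under a mild bound on $\e_0$, as in the proof of the previous theorem. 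Setting $D_n:=A_n+\sqrt{\e(\tau^\beta_n)}\,B_n$ and using monotonicity of $\e$ yields the one-step estimate $D_{n+1} \le (a_n + C^{(2)}_{L,\kappa}\sqrt{\e(\tau^\beta_n)})D_n + C^{(2)}_{L,\kappa}\sqrt{\e(\tau^\beta_n)}\,K_{\Phi,\theta_0}$.

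The next step is to translate $a_n$ to the intrinsic timescale of $(i(t))_{t\ge 0}$. Since $\mu$ is non-decreasing, $\tau^\beta_{n+1}-\tau^\beta_n\ge (\tau_{n+1}-\tau_n)/\mu(\tau^\beta_{n+1})$, so on $\Omega^\alpha_n$ I get
\[
a_n \le \exp\!\bigl(-\alpha_1(\tau_{n+1}-\tau_n)/\sqrt{n}\bigr), \qquad \sqrt{\e(\tau^\beta_n)}\le \sqrt{\alpha_2}\,e^{-\alpha_3\sqrt{n}/2}.
\]
Fix $k\in\mathbb{N}$ and work pathwise on $W^{\alpha,\infty}_k$. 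By Kolmogorov's three-series theorem applied to the independent non-negative summands $(\tau_{n+1}-\tau_n)/\sqrt{n}$, whose truncated expectations are $\sim 1/(\gamma N\sqrt{n})$ and therefore non-summable, the series $\sum_n (\tau_{n+1}-\tau_n)/\sqrt{n}$ diverges a.s. Telescoping gives $\prod_{i=k}^{n-1} a_i = e^{-\kappa(\tau^\beta_n-\tau^\beta_k)/2} \to 0$. Writing $\delta_i := C^{(2)}_{L,\kappa}\sqrt{\e(\tau^\beta_i)}$, I factor $a_i+\delta_i = a_i(1+\delta_i/a_i)$ and show that $\sum_{i\ge k}\delta_i/a_i$ is a.s. finite on $W^{\alpha,\infty}_k$: the prefactor $\delta_i$ has stretched-exponential tail $e^{-\alpha_3\sqrt{i}/2}$, while the amplifier $e^{\kappa(\tau^\beta_{i+1}-\tau^\beta_i)/2}$, combined with the upper bound $\tau^\beta_{i+1}-\tau^\beta_i\le (\tau_{i+1}-\tau_i)/\mu(\tau^\beta_i)$ and the growth condition on $\mu$ from $\Omega^\alpha_{i-1}$, is at most $e^{\kappa^2(\tau_{i+1}-\tau_i)/(4\alpha_1\sqrt{i-1})}$, which is eventually bounded by $2$ a.s. Hence $\prod_{i\ge k}(1+\delta_i/a_i)<\infty$ a.s.

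Iterating the recursion yields
\[
D_n \le D_k\prod_{i=k}^{n-1}(a_i+\delta_i) + C^{(2)}_{L,\kappa}K_{\Phi,\theta_0}\sum_{j=k}^{n-1}\sqrt{\e(\tau^\beta_j)}\prod_{i=j+1}^{n-1}(a_i+\delta_i).
\]
The first term vanishes because $\prod_{i=k}^{n-1}a_i \to 0$ while $\prod(1+\delta_i/a_i)$ is finite. Each summand in the second term is dominated uniformly in $n$ by $\sqrt{\e(\tau^\beta_j)}\prod_{i\ge j+1}(1+\delta_i/a_i)$, which is summable in $j$; moreover, for each fixed $j$, $\prod_{i=j+1}^{n-1}(a_i+\delta_i)\to 0$ as $n\to\infty$. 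Dominated convergence therefore gives $D_n\to 0$ a.s.\ on $W^{\alpha,\infty}_k$. Applying Corollary~\ref{corollary noprove} once more on the partial interval $[\tau^\beta_n,t]$ for the largest $n$ with $\tau^\beta_n\le t$ transfers the convergence to all $t$. Finally, $(W^{\alpha,\infty}_k)_k$ is increasing in $k$ and Assumption~\ref{comass} gives $\mathbb{P}(\bigcup_k W^{\alpha,\infty}_k)=1$, so $\norm{q_t-\xi_t}\to 0$ a.s.\ on the full probability space. I expect the most delicate point to be the control of the cumulative product $\prod(a_i+\delta_i)$, as $a_i$ may be close to $1$ when $\tau^\beta_{i+1}-\tau^\beta_i$ is small; the crucial mechanism is that Assumption~\ref{comass} forces $\mu(\tau^\beta_{n+1})$ to grow no faster than $\sqrt{n}$, which prevents the spikes $e^{\kappa(\tau^\beta_{i+1}-\tau^\beta_i)/2}$ from overwhelming the summability of $\sum\delta_i/a_i$.
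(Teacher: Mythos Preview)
Your overall route differs from the paper's. The paper does not work pathwise with the product $\prod_i(a_i+\delta_i)$; instead it takes expectations against the indicator $\indiq_{W^{\alpha,n}_k}$, uses the independence of $\tau_{n+1}-\tau_n$ from $\mathcal{F}_{\tau_n}$ to obtain a deterministic recursion of the form $D^\beta_{n+1}\le (1-C/\sqrt{n})D^\beta_n+C'e^{-\alpha_3\sqrt{n}}$, iterates it via Lemma~\ref{twoseq1} to get $D^\beta_n\lesssim n\,e^{-c\sqrt{n}}$, and then upgrades to almost-sure convergence by Markov's inequality plus Borel--Cantelli. In that scheme one only ever uses $a_n\le 1$, so no upper control of $1/a_n$ is required. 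Your pathwise argument, by contrast, must bound $1/a_i$ from above in order to show $\sum_i\delta_i/a_i<\infty$, and this is where a genuine gap appears.

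You assert that the upper bound $\tau^\beta_{i+1}-\tau^\beta_i\le (\tau_{i+1}-\tau_i)/\mu(\tau^\beta_i)$ combined with ``the growth condition on $\mu$ from $\Omega^\alpha_{i-1}$'' yields $1/a_i\le e^{\kappa^2(\tau_{i+1}-\tau_i)/(4\alpha_1\sqrt{i-1})}$. That would require a \emph{lower} bound $\mu(\tau^\beta_i)\ge 2\alpha_1\sqrt{i-1}/\kappa$. But the content of $\Omega^\alpha_{i-1}$ is $\kappa/(2\mu(\tau^\beta_i))\ge \alpha_1/\sqrt{i-1}$, i.e.\ $\mu(\tau^\beta_i)\le \kappa\sqrt{i-1}/(2\alpha_1)$, an \emph{upper} bound on $\mu$. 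Assumption~\ref{comass} constrains $\mu$ only from above; the sole lower bound available is $\mu\ge\mu(0)>0$. Hence your inequality is applied in the wrong direction and the conclusion ``eventually bounded by $2$'' is unsupported. The repair is easy within your framework: use $\mu(\tau^\beta_i)\ge\mu(0)$ to get $1/a_i\le e^{\kappa(\tau_{i+1}-\tau_i)/(2\mu(0))}$, and observe via Borel--Cantelli (since $\mathbb{P}(\tau_{i+1}-\tau_i>2(\log i)/(\gamma N))=i^{-2}$ is summable) that $1/a_i$ is eventually bounded by a fixed power of $i$ almost surely, which the stretched-exponential factor $\delta_i\lesssim e^{-\alpha_3\sqrt{i}/2}$ on $W^{\alpha,\infty}_k$ absorbs. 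With this correction the rest of your dominated-convergence argument goes through.
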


\begin{proof}
  For any $n\ge 0,$ by Corollary \ref{corollary noprove}, we have the following iteration inequality,
\begin{align*}
    \norm{\hat q_{\tau^\beta_{n+1}}-\xi_{\tau^\beta_{n+1}}}&\le e^{-\mu (\tau^\beta_{n+1}-\tau^\beta_n)/2}\norm{\hat q_{\tau^\beta_n}-\xi_{\tau^\beta_n}}
    \\&\ \ \ +C^{(1)}_{L,\mu} \e(\tau^\beta_n)\Big(\norm{\hat q_{\tau^\beta_n}-\xi_{\tau^\beta_n}}+\norm{\hat p_{\tau^\beta_n}}+\norm{\xi_{\tau^\beta_n}-\theta^{i(\tau^\beta_n)}_*}\Big) 
\end{align*}
and
\begin{align*}
   \norm{\hat p_{\tau^\beta_{n+1}}}\le \Big(e^{-\mu (\tau^\beta_{n+1}-\tau^\beta_n)/2}+C^{(1)}_L \e(\tau^\beta_{n})\Big)\norm{\hat p_{\tau^\beta_n}}+C^{(1)}_{L,\mu} \Big(\norm{\hat q_{\tau^\beta_n}-\xi_{\tau^\beta_n}}+\norm{\xi_{\tau^\beta_n}-\theta^{i(\tau^\beta_n)}_*}\Big). 
\end{align*}
Since $\frac{\de \beta^{-1}}{\de t}(t)= \frac{1}{\eta(\beta^{-1}(t))},$ we have $$\tau^\beta_{n+1}-\tau^\beta_n= \beta^{-1}(\tau_{n+1})-\beta^{-1}(\tau_n)\ge (\tau_{n+1}-\tau_n)/\eta(\beta^{-1}(\tau_{n+1})).$$
Hence, under the event $W^{\alpha,n}_k$, for $n\ge k$, we have 
\begin{align*}
    &\norm{\hat q_{\tau^\beta_{n+1}}-\xi_{\tau^\beta_{n+1}}}\\ &\ \le e^{-\mu (\tau^\beta_{n+1}-\tau^\beta_n)/2}\norm{\hat q_{\tau^\beta_n}-\xi_{\tau^\beta_n}}+C^{(1)}_{L,\mu} \e(\tau^\beta_n)\Big(\norm{\hat q_{\tau^\beta_n}-\xi_{\tau^\beta_n}}+\norm{\hat p_{\tau^\beta_n}}+\norm{\xi_{\tau^\beta_n}-\theta^{i(\tau^\beta_n)}_*}\Big) \Big] \\
    &\  \le \Big(e^{-\frac{\alpha_1(\tau_{n+1}-\tau_n)}{\sqrt{n}}}+C^{(1)}_{L,\mu}\alpha_2 e^{-\alpha_3 \sqrt{n}}\Big)\norm{\hat q_{\tau^\beta_n}-\xi_{\tau^\beta_n}} +C^{(1)}_{L,\mu}\alpha_2 e^{-\alpha_3 \sqrt{n}}\norm{\hat p_{\tau^\beta_n}}+ K_{\Phi,\xi_0}\alpha_2 e^{-\alpha_3 \sqrt{n}}
\end{align*}
and 
\begin{align*}
    \norm{\hat p_{\tau^\beta_n}}\le \Big(e^{-\frac{\alpha_1(\tau_{n+1}-\tau_n)}{\sqrt{n}}}+C^{(1)}_{L,\mu}\alpha_2 e^{-\alpha_3 \sqrt{n}}\Big)\norm{\hat p_{\tau^\beta_n}}+C^{(1)}_{L,\mu} \Big(\norm{\hat q_{\tau^\beta_n}-\xi_{\tau^\beta_n}}+\norm{\xi_{\tau^\beta_n}-\theta^{i(\tau^\beta_n)}_*}\Big).
\end{align*}
Since $\tau_{n+1}-\tau_n$ is independent of $\norm{\hat q_{\tau^\beta_n}-\xi_{\tau^\beta_n}}$ and $W^{\alpha,n-1}_k$, for $n\ge k$, we have 
\begin{align*}
    \E\Big[ \norm{\hat q_{\tau^\beta_{n+1}}-\xi_{\tau^\beta_{n+1}}}\indiq_{W^{\alpha,n}_k}\Big]\le  \Big(\frac{\gamma N}{\gamma N+\frac{\alpha_1}{\sqrt{n}}}+C^{(1)}_{L,\mu}\alpha_2 e^{-\alpha_3 \sqrt{n}}\Big)\E\Big[ \norm{\hat q_{\tau^\beta_n}-\xi_{\tau^\beta_n}}\indiq_{W^{\alpha,n-1}_k}\Big]\\
    +C^{(1)}_{L,\mu}\alpha_2 e^{-\alpha_3 \sqrt{n}}\E\Big[\norm{\hat p_{\tau^\beta_n}}\indiq_{W^{\alpha,n-1}_k}\Big]+ K_{\Phi,\xi_0}\alpha_2 e^{-\alpha_3 \sqrt{n}}\end{align*}
and 
\begin{align*}
    \E\Big[ \norm{\hat p_{\tau^\beta_{n+1}}}\indiq_{W^{\alpha,n}_k}\Big]&\le  \Big(\frac{\gamma N}{\gamma N+\frac{\alpha_1}{\sqrt{n}}}+C^{(1)}_{L,\mu}\alpha_2 e^{-\alpha_3 \sqrt{n}}\Big)\E\Big[ \norm{\hat p_{\tau^\beta_n}}\indiq_{W^{\alpha,n-1}_k}\Big]\\
   &\qquad \qquad  +C^{(1)}_{L,\mu}\E\Big[\norm{\hat q_{\tau^\beta_n}-\xi_{\tau^\beta_n}}\indiq_{W^{\alpha,n-1}_k}\Big]+ K_{\Phi,\xi_0}.
\end{align*}
We denote
\begin{align*}
A^\beta_n:=\E\Big[\norm{\hat q_{\tau^\beta_n}-\xi_{\tau^\beta_n}}\indiq_{W^{\alpha,n}_k}\Big]   ,\ 
    B^\beta_n:=\E\Big[\norm{\hat p_{\tau^\beta_n}}\indiq_{W^{\alpha,n}_k}   \Big].
\end{align*}
Let $D^\beta_n=A^\beta_n+ \alpha_2 e^{-\alpha_3 \sqrt{n}}B^\beta_n.$ We can find some constant $0<C_{\gamma,N,L,\mu}<1$  such that for $n\ge k $  and $k$ large enough
\begin{align}\label{iteration-beta}
    D^\beta_{n+1}\le \Big(1-\frac{C_{\gamma,N,L,\mu}}{\sqrt{n}}\Big)D^\beta_n+ C_{\Phi,\xi_0,L}e^{-\alpha_3 \sqrt{n}}.
\end{align}
Let $G^n_k:= \prod^n_{i=k}\Big(1-\frac{C_{\gamma,N,L,\mu}}{\sqrt{i}}\Big)$ with $G^n_n= 1-\frac{C_{\gamma,N,L,\mu}}{\sqrt{n}}\ge \frac{1}{2}$.  From (\ref{iteration-beta}), we have

\begin{align*}
    D^\beta_{n+1}\le G^n_kD^\beta_k+ 2C_{\Phi,\xi_0,L}\sum^n_{i=k}e^{-\alpha_3 \sqrt{i}}G^n_i.
\end{align*}
From Lemma \ref{twoseq1}, we know that there exist some constant $c>0$ such that $G^n_i\le e^{-c(\sqrt{n}-\sqrt{i})}$.  This implies 
\begin{align*}
     D^\beta_{n+1} &\le Ce^{-c\sqrt{n}}D^\beta_k+ 2C_{\Phi,\xi_0,L}\sum^n_{i=k}e^{-\alpha_3 \sqrt{i}}e^{-c(\sqrt{n}-\sqrt{i})}\\
     &\le Ce^{-c\sqrt{n}}D^\beta_k+2C_{\Phi,\xi_0,L}\sum^n_{i=k}e^{-c\land\alpha_3 \sqrt{i}}e^{-c\land\alpha_3(\sqrt{n}-\sqrt{i})} \\
&\le Ce^{-c\sqrt{n}}D^\beta_k+2C_{\Phi,\xi_0,L}ne^{-c\land\alpha_3\sqrt{n}}.
\end{align*}
Hence,
\begin{align*}
    \sum^\infty_{n=k}\mathbb{P}\Big(\norm{\hat q_{\tau^\beta_n}-\xi_{\tau^\beta_n}}\indiq_{W^{\alpha,n-1}_k}\ge \varepsilon \Big)\le \sum^\infty_{n=k} \frac{D^\beta_n}{\varepsilon}
    \le \frac{C}{\varepsilon} \sum^\infty_{n=k} ne^{-c\land\alpha_3\sqrt{n}}<+\infty.
\end{align*}
Then, the Borel–Cantelli Lemma implies that
\begin{align*}
    \lim_{n\to \infty} \norm{\hat q_{\tau^\beta_n}-\xi_{\tau^\beta_n}}\indiq_{W^{\alpha,n-1}_k}=0,
\end{align*}
almost surely.
Under the Assumption \ref{comass}, we have 
\begin{align*}
    \lim_{k\to \infty}\indiq_{(W^{\alpha,\infty}_k)^c}=0,
\end{align*}
almost surely.
Since 
\begin{align*}
    \norm{\hat q_{\tau^\beta_n}-\xi_{\tau^\beta_n}}&=\norm{\hat q_{\tau^\beta_n}-\xi_{\tau^\beta_n}}\indiq_{W^{\alpha,n-1}_k}+\norm{\hat q_{\tau^\beta_n}-\xi_{\tau^\beta_n}}\indiq_{(W^{\alpha,n}_k)^c}\\
    &\le \norm{\hat q_{\tau^\beta_n}-\xi_{\tau^\beta_n}}\indiq_{W^{\alpha,n-1}_k}+\norm{\hat q_{\tau^\beta_n}-\xi_{\tau^\beta_n}}\indiq_{(W^{\alpha,\infty}_k)^c},
\end{align*}
for $k$ large enough,  we have
\begin{align*}
    \lim_{n\to \infty}\norm{\hat q_{\tau^\beta_n}-\xi_{\tau^\beta_n}}=0,
\end{align*}
almost surely.
Similarly, we have 
\begin{align*}
    \lim_{n\to \infty}e^{-\alpha_3 \sqrt{n}}\norm{\hat p_{\tau^\beta_n}}=0
\end{align*}
almost surely.
Under the event $W^{\alpha,n}_k$, $\e(\tau^\beta_n)$ is smaller than $\alpha_2 e^{-\alpha_3 \sqrt{n}}.$ Hence, we have
\begin{align*}
    \lim_{n\to \infty}\e(\tau^\beta_n)\norm{\hat p_{\tau^\beta_n}}=0,
\end{align*}
almost surely.
Since $\lim_{n\to\infty}\tau^\beta_n=+\infty$ and
\begin{align*}
    \norm{\hat q_t-\xi_t}&\le e^{-\mu (t-\tau^\beta_{N_t})/2}\norm{\hat q_{\tau^\beta_{N_t}}-\xi_{\tau^\beta_{N_t}}}\\
    &\ \ +C^{(1)}_{L,\mu} \e(\tau^\beta_{N_t})\Big(\norm{\hat q_{\tau^\beta_{N_t}}-\xi_{\tau^\beta_{N_t}}}+\norm{\hat p_{\tau^\beta_{N_t}}}+\norm{\xi_{\tau^\beta_{N_t}}-\theta^{i(\tau^\beta_{N_t})}_*}\Big),
\end{align*}
where $\tau^\beta_{N_t}\le t\le \tau^\beta_{{N_t}+1}$, we have
\begin{align*}
    \norm{\hat q_t-\xi_t}\le (1+C^{(1)}_{L,\mu}\e_0)\norm{\hat q_{\tau^\beta_{N_t}}-\xi_{\tau^\beta_{N_t}}}+\e(\tau^\beta_{N_t})\Big(\norm{\hat p_{\tau^\beta_{N_t}}}+\norm{\xi_{\tau^\beta_{N_t}}-\theta^{i(\tau^\beta_{N_t})}_*}\Big).
\end{align*}
This  implies that $\norm{\hat q_t-\xi_t}  \to 0$, almost surely, as $t\to\infty$. 
\end{proof}

Using this result and prior knowledge about the stochastic gradient process, we can now finally show convergence to the minimiser of $\bar\Phi$.  

\begin{corollary}\label{cor:convmi}
Under the same conditions as Theorem \ref{theorem: demassderate}, we have 
\begin{align*}
\norm{\hat q_t-\theta_*}  \to 0
\end{align*}
in probability, as $t\to\infty$,
where $\theta_*$ is the unique minimiser of the function $\bar \Phi$.
\end{corollary}
\begin{proof}
From \citet[Theorem 4]{Jonas}, we know that $\xi_t$ converges to $\theta_*$ weakly, as $t \rightarrow \infty$. This is equivalent to $\norm{\xi_t-\theta_*}  \to 0$  in probability since $\theta_*$ is deterministic. Then the result follows by applying Theorem \ref{theorem: demassderate}.
\end{proof}
Hence, we have shown that when reducing -- indeed, losing -- momentum and decreasing the learning rate over time, we {have} convergence of the stochastic gradient-momentum process to the minimiser $\theta_*$.

We finish this section, by discussing the non-trivial Assumption~\ref{comass}. Indeed, we give an example below for how $(\beta(t))_{t \geq 0}$ and $(m(t))_{t \geq 0}$ can be chosen to satisfy the assumption.
\begin{example} \label{Example_commass}
Let $\beta(t)=t^2$ and $\e(t)= \e_0e^{-\lambda t}$, where $0<m_0<1$ is a constant. In this case $\eta(t)=2t$ and $\beta^{-1}(t)=\sqrt{t}.$ Hence,
\begin{align*}
    \Omega^\alpha_n=\Big\{\frac{\mu}{2\eta(\beta^{-1}(\tau_{n+1}))}\ge \frac{\alpha_1}{\sqrt{n}} \  \textit{and}\ \e(\tau^\beta_n)\le \alpha_2 e^{-\alpha_3 \sqrt{n}} \Big\}\\
    =\Big\{\frac{\mu}{4\sqrt{\tau_{n+1}}}\ge  \frac{\alpha_1}{\sqrt{n}} \  \textit{and}\ \e_0e^{-\lambda \sqrt{\tau_n}}\le \alpha_2 e^{-\alpha_3 \sqrt{n}} \Big\}.
\end{align*}
By a concentration inequality, we have  
\begin{align}\label{probineqexample2}
    \mathbb{P}\Big(\tau_n\le \frac{n}{2\gamma N} \Big)=  \mathbb{P}\Big(e^{ -2\gamma N \tau_n}\ge e^{-n}  \Big)\le e^n\E[e^{ -2\gamma N \tau_n}]=\frac{e^n}{3^n}
\end{align}
and
\begin{align}\label{probineqexample22}
    \mathbb{P}\Big(\tau_n\ge \frac{2n}{\gamma N} \Big)=  \mathbb{P}\Big(e^{ \gamma N \tau_n/2}\ge e^n  \Big)\le e^{-n}\E[e^{ \gamma N \tau_n/2}]=\frac{2^n}{e^n}.
\end{align}
When $n$ is sufficiently large, $\frac{n}{2\gamma N}\le \tau_n\le \frac{2n}{\gamma N}$ already implies that the event $ \Omega^\alpha_n$ occurs for $\alpha_1=\frac{2}{\mu\sqrt{2\gamma N}},\alpha_2=m_0,\alpha_3=\frac{\lambda}{\sqrt{2\gamma N}}$. Hence, when $n$ is sufficiently large, $(\Omega^\alpha_n)^c\subset \{\tau_n\le \frac{n}{2\gamma N}\}\cup \{\tau_n\ge \frac{2n}{\gamma N}\}.$ Therefore, from $(\ref{probineqexample2})$ and $(\ref{probineqexample22}),$ we have
\begin{align*}
     \sum^{\infty}_{n=1}\mathbb{P}(\Omega^c_n)\le \sum^{\infty}_{n=1}\Big(\frac{e^n}{3^n}+\frac{2^n}{e^n}\Big)<+\infty.
\end{align*}
The Borel-Cantelli Lemma then implies that $\lim_{k\to+\infty} \mathbb{P}(\bigcup^\infty_{i=k}  (\Omega^\alpha_i)^c)=0$ which is equivalent to  Assumption \ref{comass}.
\end{example}

\section{Discretisation of the continuous-time system} \label{sec:discr_meth}

After the previous theoretical study of stochastic gradient-momentum processes, we now propose a numerical scheme to discretise those dynamical systems.
This is crucial to turn the continuous-time dynamics into practical algorithms, see the discussion in \cite{S22,ShiJord19,wang2021global, wang2022forward,wang2022continuoustime}: the continuous-time analysis given in Sections~\ref{Sec_Homogene} and \ref{sec:heterogeneous} is only worthwhile, when the discrete algorithm behaves similarly to the continuous-time dynamics. 
The most usual discretisation of SGMP with 
{an explicit symplectic} Euler method would yield an algorithm very similar to stochastic gradient descent with classical momentum. 
However, this is hardly appropriate as 
{explicit schemes are unstable when the mass is small or when it decreases over time}.
In the following, we aim to obtain a stable, symplectic, and efficient discretisation strategy that retains the correct longtime behaviour for the deterministic parts of SGMP. We note that even in the decreasing mass case, we are committed to using a symplectic solver to retain local conversation laws in the dynamical system, see \cite{McLach}.

First, we discuss the discretisation of the underdamped gradient flow with constant or decreasing mass. For convenience, we briefly recall the ODE of interest:
\begin{equation}\label{eq:dyn_sys_discr}
\left\{ \begin{array}{rl}
\de q_t &= p_t \de t,\\
m(t)\de p_t &= - \nabla \bar\Phi(q_t) \de t-\alpha p_t \de t, \\
p_0, q_0 &\in X.
\end{array} \right.
\end{equation}

Since our theoretical analysis investigates
the case $\mass(t) \to 0$
as $t\to\infty$,
we need the discretisation
scheme to especially be stable for small values of $\mass(t)$.
In this framework, a fully explicit method, such as the forward or symplectic Euler method, 
would not be suitable for the purpose. In particular, 
since the evolution of $t\mapsto p_t$ is
directly affected by
the value of the mass $\mass(t)$, it is natural to 
consider an implicit discretisation for this variable:
\[
\mass_n \frac{p_{n+1}-p_n}{h} = - (\nabla \bar\Phi (q_{n}) +
\friction p_{n+1}),
\]
for every $n\geq 0$,
yielding
\begin{equation} \label{eq:p_discr}
p_{n+1}= \frac{1}{{\mass_n}/{h}+\friction}
\left( \frac{\mass_n}{h}p_n -\nabla \bar\Phi (q_n) 
\right),
\end{equation}
where $h>0$ denotes the discretisation step-size and $\mass_n := \mass(nh)$.
For the $q$ variable in \eqref{eq:dyn_sys_discr}
we use the scheme
\begin{equation} \label{eq:q_discr}
q_{n+1} = q_n + h p_{n+1}
\end{equation} 
for every $n\geq 0$. Finally, combining 
\eqref{eq:p_discr} and \eqref{eq:q_discr}, we 
obtain the following update rule for the 
discretised dynamical model:
\begin{equation} \label{eq:semi_impl_rule}
\begin{cases}
p_{n+1} = \frac{1}{{\mass_n}/{h}+\friction}
\left( \frac{\mass_n}{h}p_n -\nabla \bar\Phi (q_n) 
\right),\\
q_{n+1} = q_n + h p_{n+1}, 
\end{cases}    
\end{equation}
for every $n\geq 0$. As we discretise $q$ explicitly and $p$ implicity, this discretisation scheme is a semi-implicit method. Moreover, the discretisation scheme employed here is symplectic \cite[Theorem~VI.3.3]{Verlet}, so that the discretised dynamics satisfy (local) conservation laws. For more details, we refer the reader to the discussion in \cite[Section~VI.7]{Verlet}.

\begin{remark}\label{rem_underGF_GD_disc}
As suggested in the previous sections and depicted in Figure~\ref{fig_tikz_SGMP}, the parameter
$\mass$ can be interpreted as an interpolation
variable between the  gradient
flow and the underdamped gradient flow. We observe that this is still the case
for the discretised dynamical system 
\eqref{eq:semi_impl_rule}. Indeed, if we set
$\friction = 1$ and  let $\mass \to 0$, 
\eqref{eq:semi_impl_rule} turns into 
\begin{equation*}
\begin{cases}
p_{n+1} = -\nabla \bar\Phi (q_n),\\
q_{n+1} = q_n + h p_{n+1},
\end{cases}
\end{equation*}
which is exactly the gradient descent method.
\end{remark}

    {We finish this section with a comparison of the semi-implicit discretisation of the underdamped gradient flow with other momentum-based optimisation methods. First, we note that we can rephrase the update rule \eqref{eq:semi_impl_rule} to obtain a recursive definition of $q_n$ that does not involve the velocity variable $p_n$. Namely, from \eqref{eq:semi_impl_rule} we deduce that
    \begin{equation*}
        q_{n+1} = q_n + \frac{m_n}{m_n/h + \alpha} p_n - \frac{h}{m_n/h+\alpha} \nabla \bar\Phi (q_n).
    \end{equation*}
    Then, recalling that $hp_n = q_n - q_{n-1}$, we obtain that
    \begin{equation}\label{eq:update_rule_q}
        q_{n+1} = q_n + \frac{m_n}{m_n + \alpha h} (q_n - q_{n-1}) - \frac{h}{m_n/h+\alpha} \nabla \bar\Phi (q_n).
    \end{equation}
    This allows us to easily compare our scheme with other momentum-based optimisation schemes. In particular, we recall that Polyak's Heavy Ball method has constant coefficients in front of the momentum term $q_n - q_{n-1}$ and in front of $\nabla \bar\Phi(q_n)$. Moreover, when writing Nesterov's update rule in single-variable form, we obtain:
    \begin{equation*}
        q_{n+1} = q_n + \xi_n (q_n - q_{n-1}) - h \nabla \bar\Phi (q_n) - \xi_n h (\nabla \bar\Phi (q_n)- \nabla \bar\Phi (q_{n-1})),
    \end{equation*}
    with $\xi_n \equiv \frac{1-\sqrt{\mu h}}{1+\sqrt{\mu h}}$ when $\bar \Phi$ is $\mu$-strongly convex or $\xi_n = \frac{n}{n+3}$ when $\bar \Phi$ is just convex.
    Our scheme in \eqref{eq:update_rule_q} has a structure that is reminiscent of the one considered in \cite{attouch2019scaling-damped,attouch2019scaling-damped-2}, where a time rescaling was introduced in the continuous-time dynamics, yielding a non-constant (and diverging) multiplicative factor in front of the gradient of the objective. However, it is important to note that in \cite{attouch2019scaling-damped,attouch2019scaling-damped-2}  the gradient is evaluated at $q_{n+1}$, while in \eqref{eq:update_rule_q} we have $\nabla \bar \Phi(q_n)$.
    }

\subsection{Step-size choice} \label{subsection_stepsizechoice}
The first natural question about the discrete-time
optimisation method \eqref{eq:semi_impl_rule} concerns
the choice of the discretisation step-size $h>0$.
We derive a heuristic rule,
assuming for simplicity that 
$\mass_n=m$ for every $n\geq1$. 
From \eqref{eq:semi_impl_rule},
we obtain 
\begin{align*}
q_{n+1} &= q_n + \frac{h^2}{\mass + \friction h}
\left( \frac{\mass}{h} p_n - \nabla\bar\Phi(q_n)\right) \\
&= q_n -  
\frac{h^2}{\mass + \friction h}
\left( \frac{\mass}{h} 
\frac{h}{\mass + \friction h}
\left( \frac{\mass}{h}p_{n-1}
- \nabla \bar\Phi(q_{n-1})
\right)
- \nabla \bar\Phi(q_n)\right)
\end{align*}
for every $n\geq0$.
With a backward induction
argument, assuming that
$p_0=0$, we deduce that
\begin{equation*}
q_{n+1} =
q_n - 
\frac{h^2}{\mass + \friction h}
\sum_{j=0}^n\left(
\frac{\mass}{\mass + 
\friction h}
\right)^{j}
\nabla \bar\Phi(q_{n-j}).
\end{equation*}
The previous identity 
suggests that the position
at the step $k+1$ is obtained 
through $k+1$ evaluations of the
gradient at the previous points
of the discrete trajectory.
Moreover, each evaluation
is weighted by a power of the
{\it forgetting coefficient}
$\frac{\mass}{\mass + \friction h}<1$.
Therefore, it is reasonable 
to ask that the sum of the 
weights is of order 
$\frac1L$, where 
$L>0$ is 
the  Lipschitz constant of
$\nabla \bar\Phi$, see Assumption~\ref{asSGPf}. In particular,
we require
\begin{equation*}
\frac{h^2}{\mass + \friction h}
\sum_{j=0}^n\left(
\frac{\mass}{\mass + 
\friction h}
\right)^{j}
\simeq \frac{1}{L}.
\end{equation*}
Taking the limit
as $n\to\infty$ in the previous sum, we obtain 
\begin{equation*}
\frac1L \simeq 
\frac{h^2}{\mass + \friction h}
\sum_{j=0}^\infty\left(
\frac{\mass}{\mass + 
\friction h}
\right)^{j}
= \frac{h}{\friction},
\end{equation*}
or equivalently
\begin{equation} \label{eq:step-size}
h \simeq \frac{\friction}{L}.    
\end{equation}
We first note that 
the expression 
at the right-hand side of
\eqref{eq:step-size}
does not depend on
the parameter $\mass$. 
Thus, according to the empirical
argument presented above, 
there is no need to adjust the
magnitude of the step-size
according to $\mass$, also not if $\mass$ changes over time. 
This fact deviates from
the usual discretisation of
the $t\mapsto p_t$ variable in
\eqref{eq:dyn_sys_discr}.
The second observation
is that, we not only see convergence of our discrete method to gradient descent as $m \downarrow 0$, see Remark~\ref{rem_underGF_GD_disc}. We also recover 
the correct step-size for that gradient descent scheme we converge to.

We end this section with a remark on a rescaling of the potential $\Phi$.
\begin{remark}
Let us assume that the potential $\Phi$ is multiplied by
a positive constant $\rho>0$, i.e.\ 
$\Phi' = \rho \bar\Phi$.
A natural question is how we should rescale the parameters
of the method \eqref{eq:semi_impl_rule} such that
the sequence of positions $(q_n')_{n\geq 0}$ coincides with
the sequence $(q_n)_{n\geq 0}$
corresponding to the original objective.
It turns out that it is sufficient to set 
$h'=h/\rho$ and $\mass'_n= \mass_n/\rho$. Indeed, if we
we consider the sequences $(q_n')_{n\geq 0}$
and $(p_n')_{n\geq 0}$ obtained with
\begin{equation*}
\begin{cases}
p_{n+1}' = \frac{1}{\frac{\mass'_n}{h'}+\friction}
\left( \frac{\mass'_n}{h'}p_n' -\nabla \Phi' (q_n') 
\right), & p_0'=0\\
q_{n+1}' = q_n' + h' p_{n+1}', & q_0'= q_0,
\end{cases}
\end{equation*}
then a direct computation yields
\begin{equation*}
    q_n'=q_n,\qquad p_n' = \rho p_n 
\end{equation*}
for every $k\geq 1$, where the sequences 
$(q_n)_{n\geq 0}$ and $(p_n)_{n\geq 0}$ are obtained
using \eqref{eq:semi_impl_rule} with the original
objective $\bar\Phi$ and the parameters $h$ and $\mass_n$.
\end{remark}

\subsection{Randomised version of the method} 
The stochastic dynamical systems discussed throughout this work consist of piecewise deterministic ODEs where the pieces are determined by random waiting times and a subsampling process. After having discussed the ODE discretisation in the previous subsections, we now {incorporate the stochasticity into the discrete-time setting}. 
{In this regard, the random waiting time for the switching of the objective --- despite being a cornerstone of the theoretical analysis --- does not seem natural in view of practical implementations.}
The use of random waiting times in practice within this framework has been discussed by \cite{Jin, Jonas}. In the present work, we replace those random waiting times by deterministic waiting times that coincide with the time steps of the algorithm. This is exactly the {well-established} paradigm of the classical stochastic gradient descent method and 
of the classical momentum method {as well}. The waiting times may either be constant-in-time (homogeneous) or decreasing-in-time (heterogeneous). 

Throughout this work, we have (implicitly) considered the case in which we choose one potential $\Phi_{i(t)}$ at any time $t \geq 0$. Of course, we can also employ \emph{batch subsampling}. There, we have
\begin{equation} \label{eq:stoc_meth}
    \begin{cases}
p_{n+1} = \frac{1}{\frac{\mass_n}{h}+\friction}
\left( \frac{\mass_n}{h}p_n - \mathbf{v}_n
\right), \\
q_{n+1} = q_n + h p_{n+1},
\end{cases}    
\end{equation}
where 
\begin{equation} \label{eq:mini_batch_grad}
    \mathbf{v}_n :=  \frac1\ell
    \sum_{r=1}^\ell \nabla \Phi_{i_r}(q_n),
\end{equation}
and $\{ i_1,\ldots,i_\ell \}$ is a subset
 of $I$ that is sampled uniformly without replacement, i.e.\  we 
do not choose a single but rather a set of $\ell$ potentials at once and optimise with respect to their sample mean. This setting is  very useful in practice, as it reduces the subsampling error. This setting is also already fully contained in our theory. To see this, we  can just define a new set of potentials $(\Phi'_K)_{K \in I'}$, where $I' := \{K \subseteq I : \# K = \ell\}$ and $\Phi_K' = \frac1\ell
    \sum_{i \in K} \Phi_{i}$ for $K \in I'$. Then, the mean of the $(\Phi'_K)_{K \in I'}$ is $\bar\Phi$.
What is not contained in our theory, but still helpful in practice, is \emph{batch subsampling in epochs}, where at any time, we do not replace elements from which we sample in $I$ before we have picked each index once. This setting is not trivially contained in our theory due to its non-Markovian nature.


\section{Numerical Experiments} \label{sec:appl}
We now present numerical experiments in which we test the discretisation strategy proposed in Section~\ref{sec:discr_meth}. We start with academic convex and non-convex examples. Then, we employ SGMP for the training of a convolutional neural network regarding the classification of the CIFAR-10 data set. We aim to show how the method compares with the classical momentum method and standard stochastic gradient descent.

\subsection{One-dimensional non-smooth stationary point}

The first numerical experiment involves the 
one-dimensional example discussed in 
Subsection~\ref{subsec:mot_exampl}.
We recall that we want to minimise the function
\begin{equation*} 
\Phi(x) := (\mathrm{ReLU}(x)-1)^2+x^2
= \begin{cases}
x^2+1 &\mbox{if } x\leq 0\\
2x^2-2x+1 &\mbox{if } x> 0
\end{cases} \qquad (x \in \mathbb{R}),
\end{equation*}
which attains the global minimum at the point
$x^*=\frac12$. The point $\tilde x = 0$ is a non-smooth stationary
point, since $\Phi(x)\geq \Phi(\tilde x)$
for every $x\leq \tilde x$. In Subsection~\ref{subsec:mot_exampl}, we are able to show that the underdamped gradient flow can overcome the local minimiser $\tilde x$ if $\alpha^2 - 8m < 0$.

We now test this property using our discrete-time method \eqref{eq:semi_impl_rule} with the step-size constant along the iterations.  The discretisation scheme asks us to choose the learning rate $h$ according to the Lipschitz constant of $\Phi'$. Actually, the derivative 
of $\Phi$ is not continuous. 
However, in this framework, 
by ``Lipschitz constant" of $\Phi'$
we mean 
$L_0= \max \{ \mathrm{Lip}(\Phi'|_{x\leq 0}), \mathrm{Lip}(\Phi'|_{x\geq 0})\}$.
We test the scheme in this example using the precise Lipschitz constant and overestimated Lipschitz constants. The results are presented in Figure~\ref{fig:conv_mass_frict}.
We see that the condition $\alpha^2 - 8m < 0$ is also relevant for the discrete dynamical system. The inequality appears to be sharp whenever we overestimate the Lipschitz constant -- unsurprisingly as a smaller step-size leads to a more accurate discretisation of the underdamped gradient flow. Using the correct Lipschitz constant lets $\alpha^2 - 8m < 0$ appear quite conservative. Indeed, a much larger range of $\alpha, m$ allow for convergence to the global minimiser. A larger step-size makes the method more robust.

    \begin{figure}[htb]
        \centering
        \includegraphics[scale=0.33]{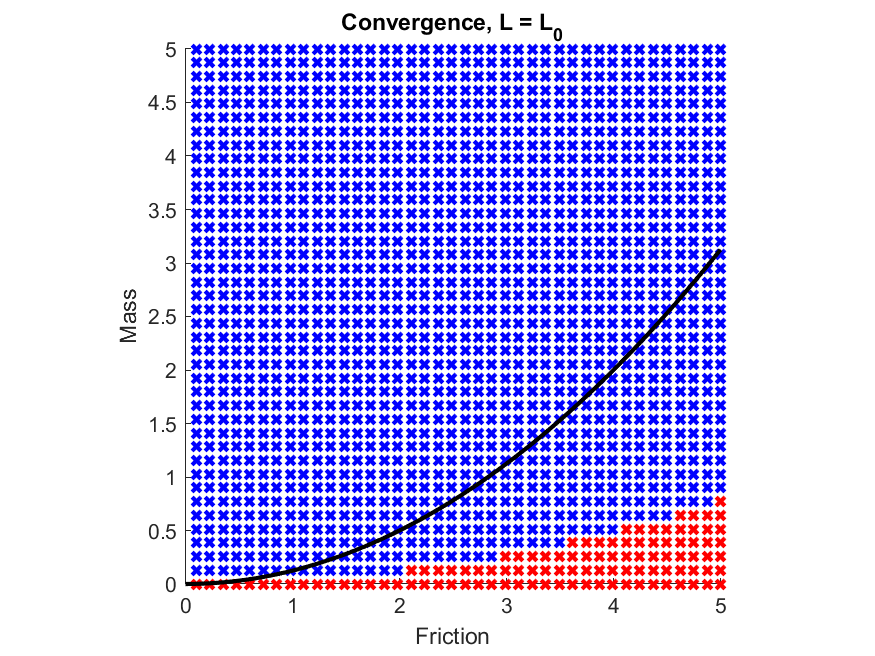}
        \includegraphics[scale=0.33]{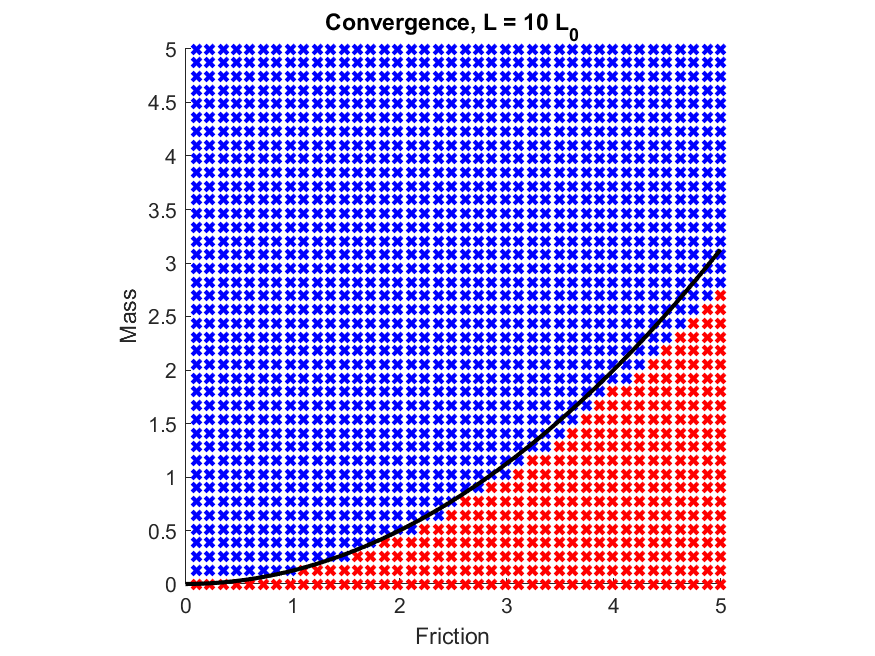}
        \includegraphics[scale=0.33]{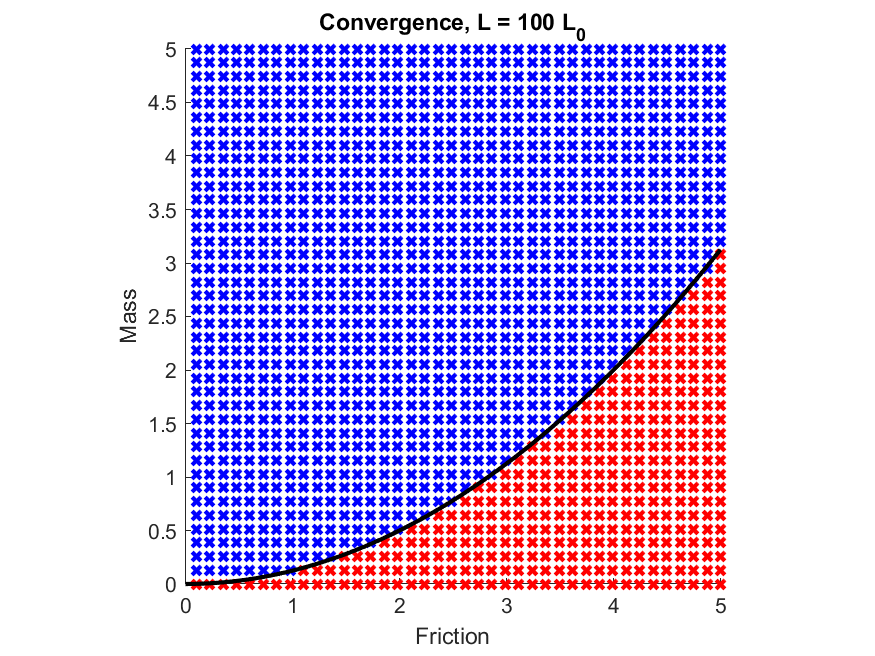}
        \caption{The plots depict
        for which  
        combinations of $\mass,\friction$ 
        the discrete-time 
        version of \eqref{eq:polyak_ode}
        derived in
        \eqref{eq:semi_impl_rule} manages
        to overcome the ``false minimiser".
        The blue crosses represent 
        convergence to the global minimiser
        of \eqref{eq:1_dim_ex},
        the red ones to the origin.
        The black curve divides the 
        $\mass,\friction$ that do and do not
        satisfy \eqref{eq:rel_escape_ex}.
        As the step-size $h=\frac1L$
        gets smaller, the 
        theoretical prediction
        \eqref{eq:rel_escape_ex} becomes more 
        accurate.
        Finally, we observe that the gradient 
        method (that corresponds to 
        $\mass =0$) never converges
        to the global minimiser.
        }
        \label{fig:conv_mass_frict}
    \end{figure}

\subsection{Strongly convex example: quadratic objective} \label{subsec:quadratic}
We now consider a target function of the form
$\bar\Phi =\frac1N \sum_{i=1}^N\Phi_i$, where
\begin{equation*}
    \Phi_i(x) = \frac{N}2 
    x^TA_ix +Nb_i^Tx,
\end{equation*}
and where
$A_i\in \R^{K\times K}$ is a 
symmetric and positive definite
matrix and $b_i\in \R^K$, for $i = 1,\ldots,N$. Importantly, all of the $(\Phi_i)_{i \in I}$ are strongly convex.
We consider $K=500$ (dimension of the 
domain), and $N=100$. 
For every $i=1,\ldots,N$, we have sampled the
eigenvalues of the matrix $A_i$ 
using a uniform distribution in 
$[0.05,15]$, and we have obtained 
$b_i$ using a normal distribution centered
at the origin and with standard deviation
$\sigma=2$. We look at a total of 100 different randomly generated problems and later average over the results.
We compare SGD and our method  \eqref{eq:semi_impl_rule}.
At each iteration we use a mini-batch of 
$\ell = 10$ elements of $\{ \Phi_1,\ldots,\Phi_N\}$
to compute the stochastic 
approximation of the gradient of $\bar\Phi$.
Finally, the discretisation step-size has been chosen 
polynomially decreasing, namely at the 
$n$-th iteration we set 
$h_n= h_0/n$. {This is to provide a numerical simulation for the theoretical framework analysed in Section~\ref{sec_losingboth}, where the re-scaling of the process $(i(t))_{t\geq0}$ has the effect of progressively decreasing the learning rate.} 
The results are reported in
Figure~\ref{fig:convex_exp}.
As we can see, SGMP shows
a faster convergence than the classical
SGD scheme. In this case,  decreasing the
mass parameter $\mass$ leads to a deterioration
of the performances.
Indeed, consistently with the theoretical
predictions, the behaviour of SGMP
gets closer to the stochastic gradient process as $\mass$ diminishes.
We also study the case where the mass is non-constant and decreases over time as  $m_k=m_0(0.995)^k$. In that situation, the decrement of the mass leads to a slower convergence as well.
\begin{figure}[htb]
    \centering
    \includegraphics[scale = 0.45]{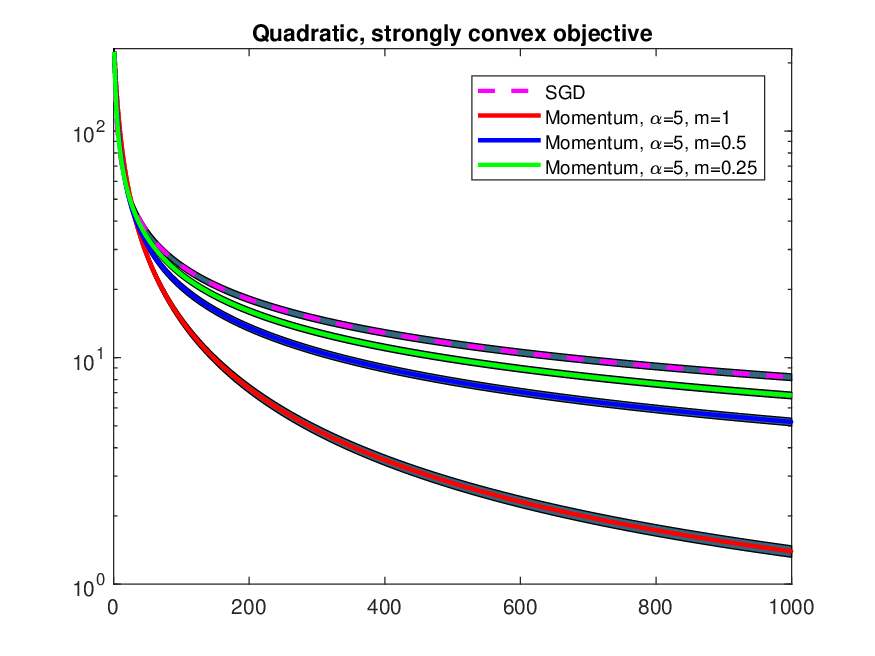}
    \includegraphics[scale = 0.45]{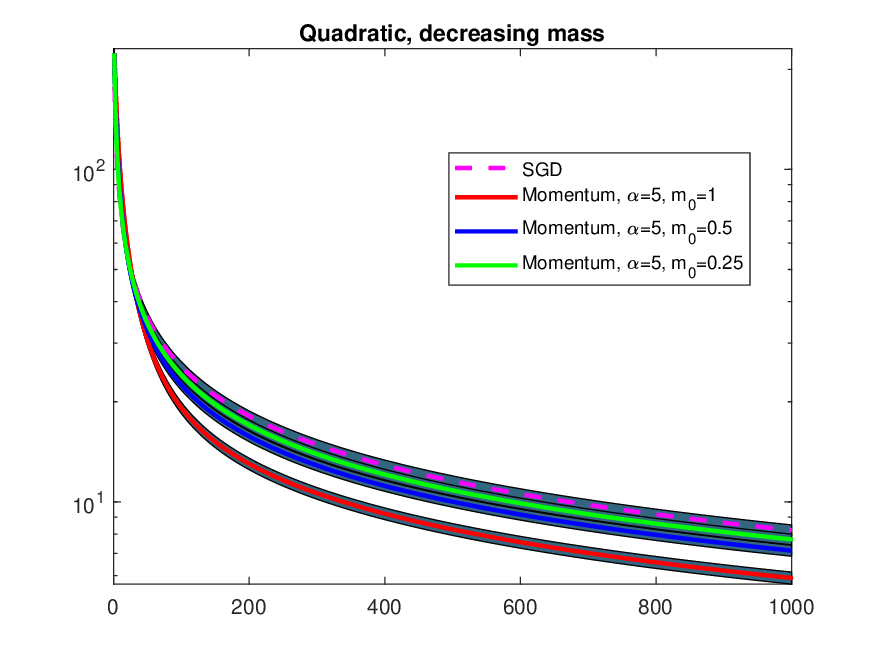}
    \caption{Convergence rate comparison.
    The plot represents the decreasing  
    distance from the true minimiser achieved
    by the SGD and SGMP
    introduced in \eqref{eq:stoc_meth}.
    We consider the case where the mass is constant, but the step-size $(h_n)_{n=0}^\infty$ is decreasing (left) and the case where the mass is {reduced} as $\mass_k=\mass_0(0.995)^k$ and the step-size decreases as before (right, see Subsection~\ref{sec_losingmomentum}).
    The experiments are repeated $100$ times
    (always resampling the potentials), and we
    report the mean distance achieved by each method, 
    and the corresponding standard deviation.
    }
    \label{fig:convex_exp}
\end{figure}

\subsection{Non-convex example: polynomial function} \label{subsec:polyn}
We studied the behaviour of our
SGMP
in the case of a polynomial non-convex function given as the sum
$\bar\Phi=\frac1N \sum_{i=1}^N\Phi_i$, where for 
every $i=1,\ldots,N$ 
\begin{equation*}
    \Phi_i(x) = -\frac{N}{2} x^TA_ix +N b_ix + \frac1{4}
    \sum_{j=1}^nx_j^4
\end{equation*}
and where $A_i\in \R^{K\times K}$ is a 
symmetric and positive definite
matrix and $b_i\in \R^K$.
The problem is non-convex, since 
the Hessian of $\bar\Phi$ (as well as
the one of each $\Phi_1,\ldots,\Phi_N$)
is negative definite at the origin $x=0$.
On the other hand, outside a large enough
compact set, the objective function
$\Phi$ is locally convex.
We considered $K=500$ (dimension of the 
domain), and $N=100$. 
For every $i=1,\ldots,N$, we have sampled the
eigenvalues of the matrix $A_i$ 
using a uniform distribution in 
$[0.05,15]$, and we obtained 
$b_i$ using a normal distribution centered
at the origin and with standard deviation
$2$.
At each iteration we use a mini-batch of 
$\ell= 5$ elements of $\{ \Phi_1,\ldots,\Phi_N\}$
to compute the stochastic 
approximation of the gradient of $\Phi${, and we keep the learning rate constant along the iterations.}
We have compared the SGMP with SGD.
In this case, the learning rate is kept
constant during the iterations.
The results are reported in
Figure~\ref{fig:non_convex}.
In this case it seems that the stochastic momentum
method tends to stabilise in correspondence of 
lower values of the objective function.
Interestingly,
    we observe that the implementations with
    smaller $\mass$ have a faster decay 
    in the initial iterations -- as opposed to the results obtained in the previous subsection.

\begin{figure}[htb]
    \centering
    \includegraphics[scale=0.45]{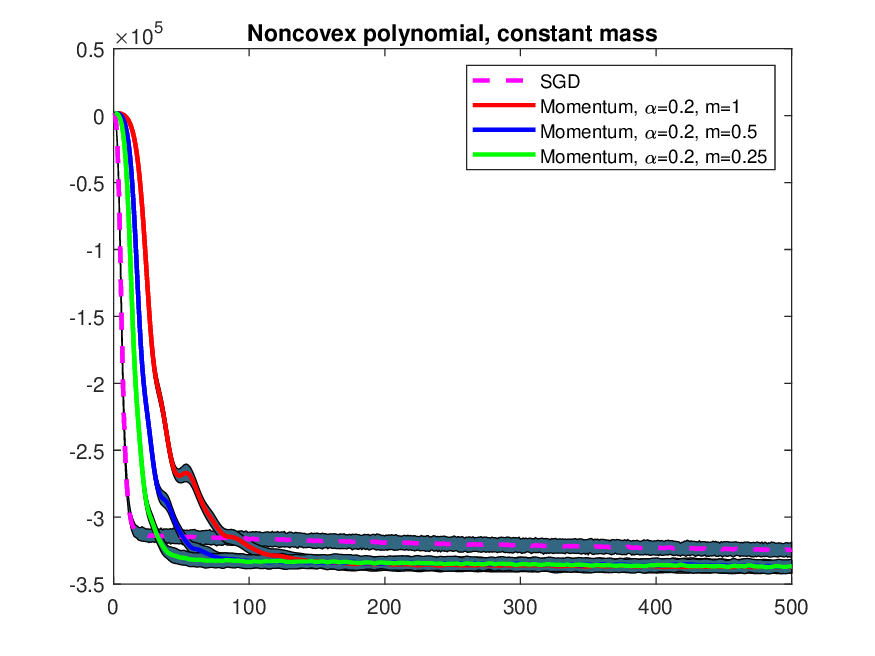}
    \includegraphics[scale=0.45]{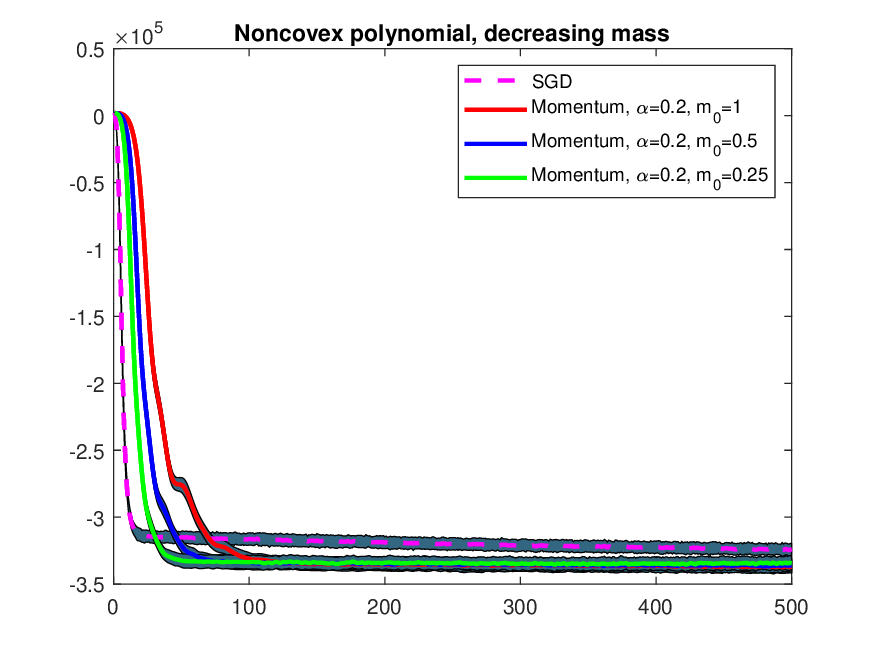}
    \caption{Decay of objective comparison.
    The plot represents the decreasing 
    objective function achieved by the SGD
    and SGMP
    introduced in \eqref{eq:stoc_meth}.
    We consider the constant-mass regime
    (left) and the exponentially decreasing
    case $\mass_k=\mass_0(0.995)^k$ (right).
    {Experiments are run $100$ times}, and
    we reported the mean objective decrease achieved
    by the methods, and the respective standard deviations.
    }
    \label{fig:non_convex}
\end{figure}

\subsection{Convolutional Neural Network (CNN)} \label{subsec:cnn}

We now employ the discrete SGMP method (\ref{eq:semi_impl_rule}) to solve the CIFAR-10 \citep{CIFAR10}, image classification task with a convolutional neural network (CNN). The CIFAR-10 data set consists of $6 \cdot 10^4$  colour images ($32\times 32$ pixels) which are split into $5 \cdot 10^4$ training images and $10^4$ test images. CIFAR-10 has 10 classes (e.g.\ airplane, dog, frog,...) with 6000 images per class. In the classification task, images with known class are used to train the CNN to automatically recognise the class of any image. We use a VGG-like CNN architecture \citep{vgg}. More precisely, we use $3\times 3$ kernels with depth $32$, $64$, and $128$. The network contains 6 convolutional layers, each of them followed by the ReLU activation, batch normalisation, max-pooling, and drop-out layers. The train data is augmented as in \cite{DataAug}, that is a random horizontal flip and a $32\times32$ random crop after a $4\times4$ padding.
The training is done with Google Colab using GPUs (often Tesla V100, sometime Tesla A100). We compare SGD, classical momentum, and SGMP (\ref{eq:semi_impl_rule}) with different parameters, constant mass, and decreasing mass. The experiments are set in the following way. We train for $800$ epochs with batch size $\ell= 100$ and no weight decay. We use constant learning rate $\eta=0.01$ for SGD and the classical momentum. In classical momentum, we set the momentum hyperparameter $\rho=0.9$. 
See Figure \ref{fig:loss} for the plots of train loss for constant mass. In Figure \ref{fig:loss}, note that with fixed $\alpha$ and $h$, hSGMP performs better with small mass $m$. We have discussed the stability of (\ref{eq:semi_impl_rule}) with small $m$ and (relatively) large stepsize in \ref{subsection_stepsizechoice}. In addition, when $m\downarrow0$, the red line (hSGMP) converges to the blue line (SGD), which aligns with Theorem \ref{second th}.
Furthermore, we observe that with fixed step-size $h$, a small ratio of $\alpha$ and $m$ gives better results suggesting a possible rule for choosing hyperparameter in practice. For decreasing mass, see Figure \ref{fig:decreasing_loss} for the plots of train loss for dmSGMP. The decreasing rate is set to be 
$m = m_0 0.995^k$, where $k$ is the number of iterations and $m_0=0.1$ is the initial mass. The train loss for each epoch is calculated by averaging over batches. {In Figure \ref{fig:decreasing_loss}, dmSGMP achieves a similar train loss as the classical momentum method. They might arrive at different minimisers, but this shows that dmSGMP is a practical algorithm for machine learning optimisation that we are able to lay a solid analytical foundation for. For this particular task, dmSGMP has attained a desirable result without decreasing learning rate -- from Corollary \ref{cor:convmi} we would expect a behaviour similar to SGD. For larger datasets and more complicated tasks that we are not able to experiment due to lack of resource, we conjecture that ddSGMP may be an effective optimisation method with proper choice of hyperparameters.} See Table \ref{tab:acc} for the train and test accuracy. Accuracy is measured using the model obtained after 800 epochs of training. 
{Overall, we conclude that hSGMP achieves competitive test accuracy compared with  classical momentum. When decreasing the mass, dmSGMP achieves competitive train loss compared to classical momentum.}

\begin{figure}[htb]
  \centering
    \includegraphics[width=1\textwidth]{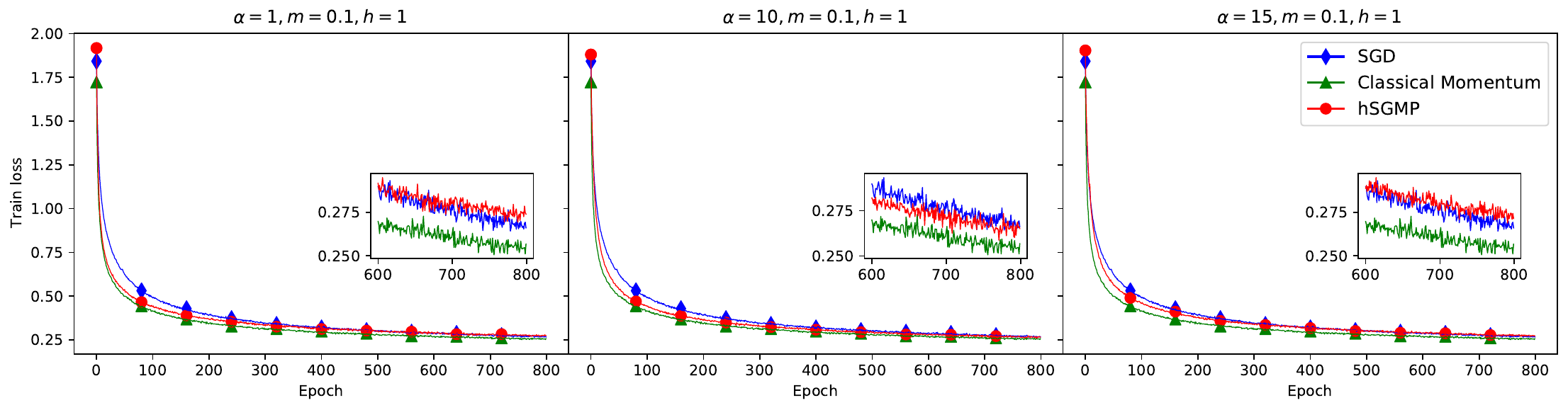}
    \includegraphics[width=1\textwidth]{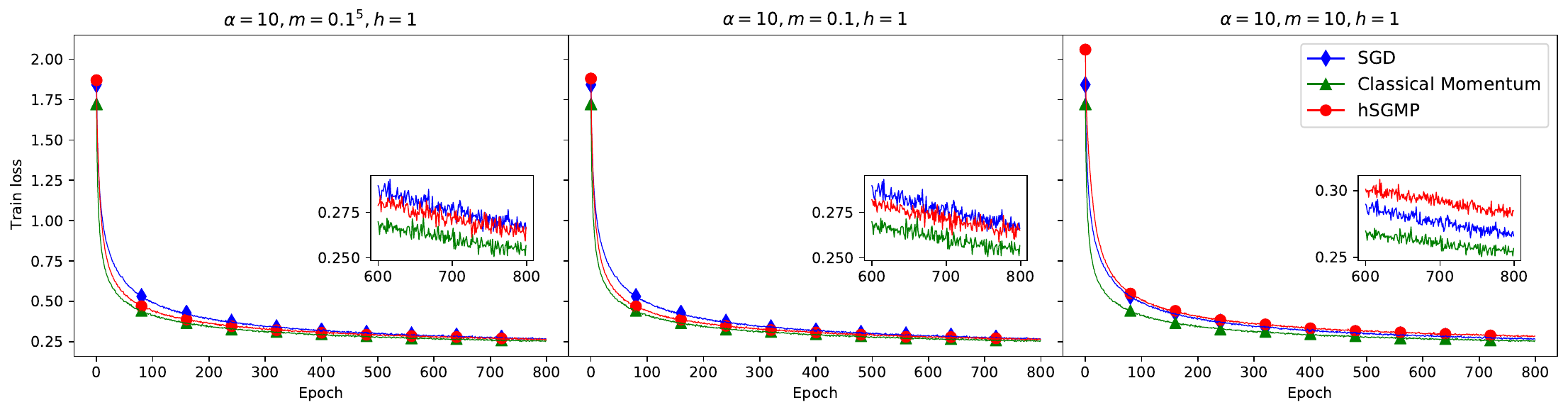}
    \includegraphics[width=1\textwidth]{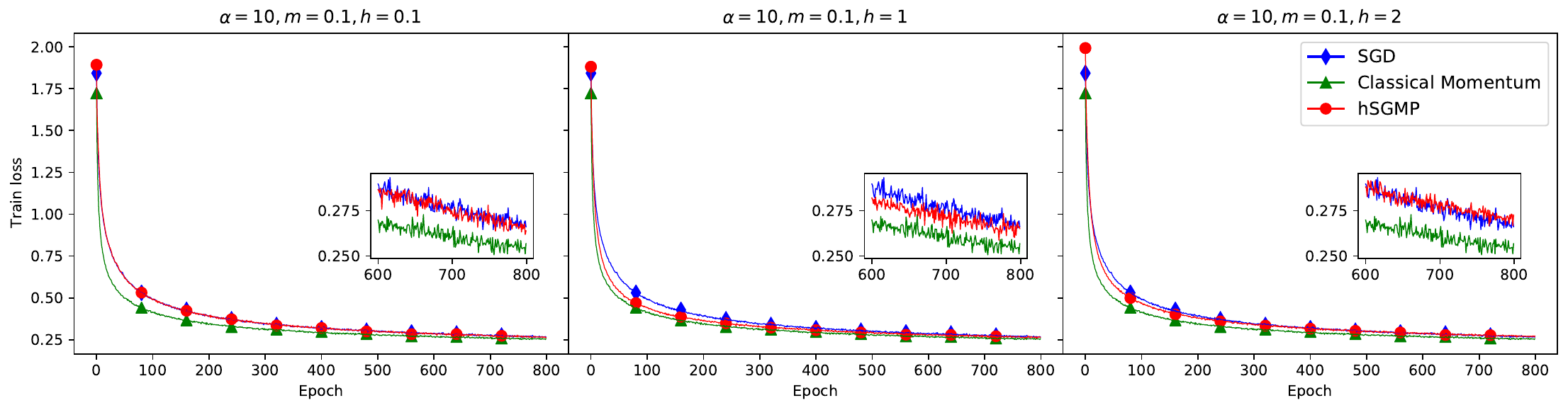}
\caption{Train loss comparison for SGD ($\eta=0.01$), classical momentum ($\eta=0.01, \rho =0.9$), and hSGMP for CNN on CIFAR-10. We vary $\alpha$, $\e$ and $h$ in each experiment, which are specified in the title of each plot. }\label{fig:loss}
\end{figure}

\begin{figure}[htb]
  \centering
    \includegraphics[width=.4\textwidth]{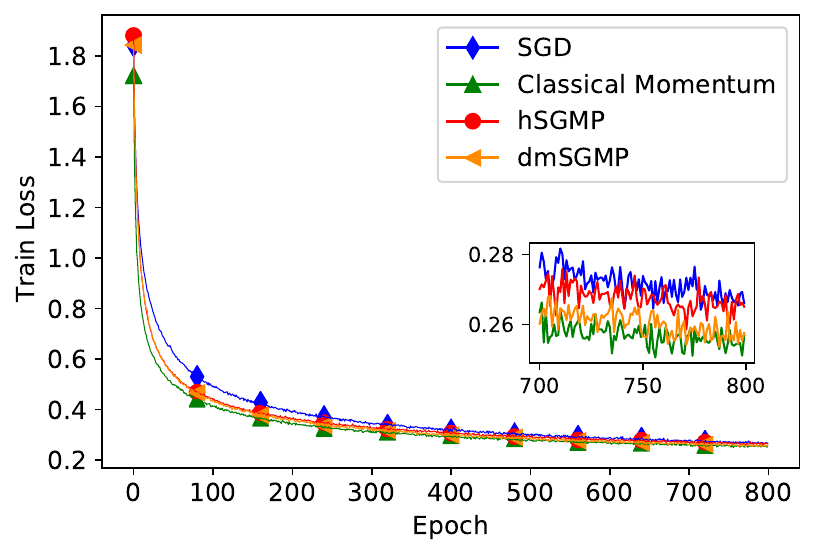}
\caption{Train loss comparison for SGD ($\eta=0.01$), classical momentum ($\eta=0.01, \rho =0.9$), hSGMP ($\alpha=10, \e=0.1, h=1$), and dmSGMP ($\alpha=10, \e_0=0.1, m = m_0 0.995^k, h=1$) for CNN on CIFAR-10.}\label{fig:decreasing_loss}
\end{figure}

\begin{table}[htb]
\begin{tabular}{l|l|l|l}
\textbf{Method} & \textbf{Parameters} & \textbf{Train Acc.} & \textbf{Test Acc.} \\ \hline
{SGD}   &    $\eta = 0.01$        &     $96.902$      &     $90.98$       \\ \hline
{Classical momentum}  &       $\eta= 0.01,\ \rho=0.9$    &    $97.236$     &  $91.33$       \\ \hline
{hSGMP} &    $\alpha=1,\ \e=0.1,\ h=1$            &    $97.096$     &  $91.09$     \\
  & $\alpha=10,\ \e=0.1,\ h=1$    &   $96.986$  &      $91.25$   \\
  &     $\alpha=15,\ \e=0.1,\ h=1$    &   $96.822$         &  $91.09$    \\
 &     $\alpha=10,\ \e=10,\ h=1$  &   $96.556$          &  $90.75$        \\
 &     $\alpha=10,\ \e=0.1^5,\ h=1$     &   97.35 &  91.35  \\
&     $\alpha=10,\ \e=0.1,\ h=2$    &   $96.984$          &  $90.87$    \\
 &     $\alpha=10,\ \e=0.1,\ h=0.1$   &   $97.108$     &  $91.26$ \\
 \hline
{dmSGMP} &    $\alpha=10,\ \e_0=0.1,\ h=1$            &    $97.288$     &  $91.32$  
\end{tabular}\caption{Comparison of train and test accuracy (\%) with different hyperparameters over the CIFAR-10 dataset.}
 \label{tab:acc}
\end{table}

\section{Conclusions}\label{sec_conc}
In this work, we have proposed and analysed the stochastic gradient-momentum process, a continuous-time dynamics representing momentum-based stochastic optimisation. We have especially analysed limiting behaviour when reducing learning rate and/or particle mass. In this context, learning rate and particle mass can either be reduced homogeneously or decrease over time. We have shown pathwise or longtime convergence to the underlying gradient flow or the stochastic gradient process,  respectively. We have then proposed a stable discretisation strategy for the stochastic gradient-momentum process and tested the strategy in several numerical examples. In those, we especially saw that the stable discretisation of the stochastic gradient-momentum process can achieve a similar accuracy in a CNN training compared with (the possibly unstable) stochastic gradient descent with classical momentum algorithm.

Most of the theoretical results we have obtained throughout this work refer to the setting of convex optimisation. Convex optimisation is vital in, e.g.\ image reconstruction. The training of neural network usually requires non-convex optimisation and momentum-based methods are especially popular in non-convex settings. Hence, a natural future research direction are non-convex optimisation problems.
The stochastic gradient-momentum process does not represent the adaptivity in the Adam algorithm.  To represent the adaptivity, we would need to study a two-sided dependence between $(i(t))_{t \geq 0}$ and $(p(t),q(t))_{t \geq 0}$ and a non-linear weighting in front of the gradient. Both these additions to the stochastic gradient-momentum process are a very interesting and challenging direction for future research.


\acks{A.S. acknowledges partial support from INdAM--GNAMPA.}


\appendix

\section{Longtime behavior of the underdamped gradient flow}\label{appendix:underdamped}
We recall that the underdamped gradient flow (\ref{eq:AS:pq}) is defined as the following dynamical system
\begin{equation*}
\left\{ \begin{array}{rl}
\de q_t &= p_t \de t,\\
\de p_t &= - \nabla \bar\Phi(q_t) \de t-\alpha p_t \de t, \\
p_0, q_0 &\in X.
\end{array} \right.
\end{equation*}
We now show that the position in the underdamped gradient flow converges to the global minimiser of the objective function $\bar\Phi$  and the velocity converges to 0. 
We shall assume that $\bar\Phi$ satisfies Assumption \ref{as1.2} with $\mathcal{A}^{\lambda, \alpha}$ for some $\lambda$ and $\alpha$ defined in \eqref{eq:AS:pq}.
First, we note that since $\bar\Phi$ achieves a global minimum at $\theta_*$, $\theta_*$ is a critical point of $\bar\Phi$. Then, (\ref{as:phi}) implies  that $\theta_*$ is the unique critical point of $\bar\Phi$.
{We use a technique similar to that of, e.g.\ \cite{doi:10.1137/21M1403990, Lojasiewicz, EGZ,  doi:10.1137/19M1272767}.} Indeed, we introduce the  Lyapunov function $V: \R^n\times \R^n\to \R$ given by
\begin{align}\label{defV}
    V(x,y):=&  \bar\Phi(x)-\bar\Phi(\theta_*)+ \frac{\alpha^2}{4}\Big(\norm{x-\theta_*+\alpha^{-1}y}^2+\norm{\alpha^{-1}y}^2-\lambda\norm{x-\theta_*}^2\Big)
\end{align} 
which we employ below. Using $V$, we can show Lyapunov exponential global stability for the underdamped gradient flow \eqref{eq:AS:pq}.
\begin{proposition}\label{th2}
Assume that $\bar\Phi$ satisfies Assumption \ref{asSGPf} and \ref{as1.2} with $\mathcal{A}^{\lambda, \alpha}$.  Let $(p_t,q_t)_{t \geq 0}$ be the solution to (\ref{eq:AS:pq}) and $V$ be the Lyapunov function defined in \eqref{defV}. Then we have the following inequality
\begin{align*}
V(q_t,p_t)\le V(q_0,p_0)e^{-\alpha\lambda t}  \qquad (t \geq 0).
\end{align*}
\end{proposition}
\begin{proof}
Notice that \eqref{as:phi} implies that $\frac{\mathrm{d}V(q_t,p_t)}{\mathrm{d}t} \leq -\alpha\lambda V(q_t,p_t)$.
By Gr\"onwall's inequality, we obtain
$
V(q_t,p_t)\le V(q_0,p_0)e^{-\alpha\lambda t}.
$
\end{proof}
In the previous proposition, we show that the Lyapunov function $V(q_t,p_t)$ is bounded above by a number that exponentially decreases in $t > 0$. $V(q_t,p_t)$ is zero, if $q_t = \theta_*$ and $p_t = 0$, i.e.\  the particle is positioned at the unique minimiser of $\bar\Phi$ and the particle velocity is 0: the dynamical system {has reached a stationary state.} However, $V$ can have multiple zeros. Thus, we need some more work to show exponential convergence of the dynamical system.
\begin{corollary} \label{cor_det_conv}
Under the conditions of Proposition \ref{th2}, we have
$$
\norm{p_t}^2+\norm{q_t-\theta_*}^2\le C_{\alpha,\lambda}e^{-\alpha\lambda t}V(q_0,p_0)  \qquad (t \geq 0).
$$
\end{corollary}
\begin{proof}
Since $\lambda\le \frac{1}{4},$ we have, by the $\varepsilon$-Young inequality:
\begin{align*}
    V(x,y)&\ge \frac{\alpha^2}{4}\Big((1-\lambda)\norm{x-\theta_*}^2+2\norm{\alpha^{-1}y}^2+2(x-\theta_*)\cdot (\alpha^{-1}y)\Big)\\
    &\ge \frac{\alpha^2}{4}\Big((1-\lambda)\norm{x-\theta_*}^2+2\norm{\alpha^{-1}y}^2-2\norm{\alpha^{-1}y}^2-\frac{1}{2}\norm{x-\theta_*}^2\Big)\\
    &\ge \frac{\alpha^2}{4}\left(\frac{1}{2}-\lambda\right)\norm{x-\theta_*}^2,
\end{align*}
which implies that
\begin{align*}
    \norm{q_t-\theta_*}^2\le \frac{8}{\alpha^2(1-2\lambda)}V(q_t,p_t)\le \frac{8}{\alpha^2(1-2\lambda)} V(q_0,p_0)e^{-\alpha\lambda t}.
\end{align*}
Similarly, again by the $\varepsilon$-Young inequality, we have  for $\norm{p_t}$ that
\begin{align*}
    V(x,y)&\ge \frac{\alpha^2}{4}\Big((1-\lambda)\norm{x-\theta_*}^2+2\norm{\alpha^{-1}y}^2+2(x-\theta_*)\cdot (\alpha^{-1}y)\Big)\\
    &\ge \frac{\alpha^2}{4}\Big((1-\lambda)\norm{x-\theta_*}^2+2\norm{\alpha^{-1}y}^2-\frac{1}{1-\lambda}\norm{\alpha^{-1}y}^2-(1-\lambda)\norm{x-\theta_*}^2\Big)\\
    &\ge \frac{\alpha^2}{4}\left(\frac{1-2\lambda}{1-\lambda}\norm{\alpha^{-1}y}^2\right),
\end{align*}
which implies 
\begin{align*}
    \norm{p_t}^2\le  \frac{4(1-\lambda)}{1-2\lambda}e^{-\alpha\lambda t} V(q_0,p_0)\le e^{-\alpha\lambda t} V(q_0,p_0).
\end{align*}
Therefore, we conclude that
\begin{align*}
    \norm{q_t-\theta_*}^2+\norm{p_t}^2\le C_{\alpha,\lambda}e^{-\alpha\lambda t} V(q_0,p_0),
\end{align*}
where $C_{\alpha,\lambda}=4+\frac{8}{\alpha^2(1-2\lambda)}.$
\end{proof}

\section{Decreasing momentum: the fixed-sample case} \label{appendix_decreasing_momentum}
We now consider the SGMP dynamics subject to a fixed sample $i \in I$.  More specifically,  we define
\begin{equation}\label{eq:AS:pqii}
\left\{ \begin{array}{rl}
\de q^{i,\e}_t &= p^{i,\e}_t\de t,\\
 \e \de p^{i,\e}_t &= - \nabla \Phi_i(q^{i,\e}_t)\de t-  p^{i,\e}_t\de t, \\
p^{i,\e}_0 &= p^i_0 \in X,\\ q^{i,\e}_0 &= q^i_0 \in X,
\end{array} \right.
\end{equation}
with mass $\e>0$.
When $\e\to 0$, we have the following formal limiting equation
\begin{equation} \label{Eq_SGPC}
\left\{ \begin{array}{ll}
    \de  \theta^i_t &= - \nabla \Phi_i(\theta^i_t) \de t,\\
    \theta^i_0 &\in X.
\end{array} \right.
\end{equation}
This is the gradient flow with respect to the potential $\Phi_i$ {for some $i \in I$}.  To prove this described limiting behaviour, we require three auxiliary results. In the first one we study, similarly to Corollary~\ref{cor_det_conv}, the longtime behaviour of the deterministic underdamped dynamical system (\ref{eq:AS:pqii}). This time with emphasis on the influence of the mass $\e$. Interestingly, we can see that the convergence rate is independent of the mass $\e$, if it is sufficiently small. We note that these results are similar to previous results by \cite{attouch2000heavy-2}.
\begin{lemma}\label{lem:dqi} Let $(p^{i,\e}_t,q^{i,\e}_t)_{t \geq 0}$ be the solution to (\ref{eq:AS:pqii}). 
Let $\Phi_i$ satisfy Assumption \ref{asSGPf} and \ref{as1.2} with $\mathcal{A}^{\lambda_i, 1}$ and critical point $\theta^i_*$. We set $\lambda:=\min_{i\in I}\lambda_i$. Then for $0<\e\le 1$, we have the following inequality
\begin{align}\label{lemma1}
    \norm{q^{i,\e}_t-\theta^i_*}^2\le 16 (L+2) e^{-\lambda t}(\norm{q^i_0-\theta^i_*}^2+m^2\norm{p^i_0}^2)  \qquad (t \geq 0),
\end{align}
where $L$ is the Lipschitz constant of the $\nabla \Phi_i$.
\end{lemma}

\begin{proof}
We recall Assumption \ref{as1.2} for $\Phi_i$ with $\alpha=1$:
$$
(x-\theta^i_*)\cdot \nabla \Phi_i(x)/2\ge \lambda (\Phi_i(x)-\Phi_i(\theta^i_*)+\norm{x-\theta^i_*}^2/4),
$$
which implies that
\begin{align*}
    (x-\theta^i_*)\cdot \nabla \Phi_i(x)/(2\e)\ge \lambda (\e^{-1}\Phi_i(x)-\e^{-1}\Phi_i(\theta^i_*)+\e^{-1}\norm{x-\theta^i_*}^2/4).
\end{align*}
And since $0<\e\le 1,$ we have 
\begin{align*}
    \lambda (\e^{-1}\Phi_i(x)-\e^{-1}\Phi_i&(\theta^i_*)+\e^{-1}\norm{x-\theta^i_*}^2/4) \\ &= \lambda\e (\e^{-2}\Phi_i(x)-\e^{-2}\Phi_i(\theta^i_*)+\e^{-2}\norm{x-\theta^i_*}^2/4)\\
    &\ge \lambda\e (\e^{-1}\Phi_i(x)-\e^{-1}\Phi_i(\theta^i_*)+\e^{-2}\norm{x-\theta^i_*}^2/4).
\end{align*}
So Assumption \ref{as1.2} implies that
$$
(x-\theta^i_*)\cdot \nabla \Phi_i(x)/(2\e)\ge\lambda\e (\e^{-1}\Phi_i(x)-\e^{-1}\Phi_i(\theta^i_*)+\e^{-2}\norm{x-\theta^i_*}^2/4)
$$
which  exactly means that $\Phi_i(x)/\e$ satisfies Assumption \ref{as1.2} with $\mathcal{A}^{\lambda m, m^{-1}}$. We define
\begin{align*}
    V^{i,\e}(x,y)=\frac{\Phi_i(x)-\Phi_i(\theta^i_*)}{\e}+ \frac{1}{4\e^2}\Big(\norm{x-\theta^i_*+\e y}^2+\norm{\e y}^2-\e\lambda\norm{x-\theta^i_*}^2\Big). 
\end{align*}
From Proposition \ref{th2}, we have
\begin{align*}
    V^{i,\e}(q^{i,\e}_t,q^{i,\e}_t)\le V^{i,\e}(q^i_0,p^i_0)e^{-\lambda t}.
\end{align*}
Next, we are going to show that
\begin{align*}
  c\e^{-2}\norm{x-\theta^i_*}^2\le V^{i,\e}(x,y)\le (L+2)\Big(\e^{-2}\norm{x-\theta^i_*}^2+\norm{y}^2\Big),
\end{align*}
where $c= \frac{1}{16}$.

Since $\Phi_i(x)-\Phi_i(\theta^i_*)\ge 0$ and $\norm{x-\theta^i_*+\e y}^2+\norm{\e y}^2\ge \frac{\norm{x-\theta^i_*}^2}{2},$ we have
\begin{align*}
    V^{i,\e}(x,y)&\ge \frac{1}{4\e^2}\Big(\norm{x-\theta^i_*+\e y}^2+\norm{\e y}^2-\e\lambda\norm{x-\theta^i_*}^2\Big) \\
    &\ge \frac{1}{4\e^2}\Big(\frac{1}{2}-\e\lambda\Big)\norm{x-\theta^i_*}^2\ge \frac{\norm{x-\theta^i_*}^2}{16\e^2}.
\end{align*}
Notice that
\begin{align*}
    \Phi_i(x)-\Phi_i(\theta^i_*)= \int_0^1 (x-\theta^i_*)\Big[\nabla \Phi_i\Big((x-\theta^i_*)s+\theta^i_*)\Big)-\nabla \Phi_i(\theta_*)\Big]ds\le \frac{L\norm{x-\theta^i_*}^2}{2}.
\end{align*}
This implies that
\begin{align*}
    V^{i,\e}(x,y)&\le \frac{L\norm{x-\theta^i_*}^2}{2\e}+ \frac{1}{2\e^2}\Big(\norm{x-\theta^i_*}^2+\norm{\e y}^2\Big)\\
    &\le (L+2)\Big(\e^{-2}\norm{x-\theta^i_*}^2+\norm{y}^2\Big).
\end{align*}
Hence, we immediately get 
\begin{align*}
    \e^{-2}\norm{q^{i,\e}_t-\theta^i_*}^2\le 16 (L+2) e^{-\lambda t}(\e^{-2}\norm{q^i_0-\theta^i_*}^2+\norm{p^i_0}^2),
\end{align*}
which implies that
\begin{align*}
    \norm{q^{i,\e}_t-\theta^i_*}^2\le 16 (L+2) e^{-\lambda t}(\norm{q^i_0-\theta^i_*}^2+\e^2\norm{p^i_0}^2).
\end{align*}
\end{proof}

In the next auxiliary result, we show boundedness of the velocity, i.e.\  $(p^{i,\e}_t)_{t \geq 0}$ that depends on the mass $\e$. Moreover, we show Lipschitz continuity of the particle position with respect to time. Note that the Lipschitz constant can be chosen independently of $\e$.

\begin{lemma}\label{lem:bound+Lip} {Let $(p^{i,\e}_t,q^{i,\e}_t)_{t \geq 0}$ be the solution to (\ref{eq:AS:pqii}). Let $\Phi_i$ satisfy Assumption \ref{asSGPf} and \ref{as1.2} with $\mathcal{A}^{\lambda_i, 1}$ and critical point $\theta^i_*$. Then for $0<\e\le 1$,} we have for any $0\le s\le t$, 
\begin{align*}
\norm{p^{i,\e}_t}&\le \Big(e^\frac{-t}{\e}+C_L\e\Big)\norm{p^i_0}+C_L  \norm{q^i_0-\theta^i_*}, \\  
    \norm{q^{i,\e}_t-q^{i,\e}_s}&\le C_L(t-s)\Big(\norm{q^i_0-\theta^i_*}+\norm{p^i_0}\Big) \qquad (t \geq 0),
\end{align*}
{where $C_L=16(L+2)L + 1$ and $L$ is the Lipschitz constant of $\nabla \Phi_i$.}
\end{lemma}
\begin{proof}
 We take the derivative of $e^\frac{t}{\e}p^{i,\e}_t$, from \eqref{eq:AS:pqii}, we get
 
\begin{align}\label{pie}
    p^{i,\e}_t= e^\frac{-t}{\e}p^i_0-\e^{-1}e^\frac{-t}{\e} \int_0^te^\frac{s}{\e}\nabla \Phi_i(q^{i,\e}_s)\de s.
\end{align}
Hence, we have
\begin{align*}
     \norm{p^{i,\e}_t}
     &\le \norm{e^\frac{-t}{\e}p^i_0}+\e^{-1}e^\frac{-t}{\e} \int_0^te^\frac{s}{\e}\norm{\nabla \Phi_i(q^{i,\e}_s)-\nabla \Phi_i(\theta^i_*)}\de s\\
     &\le e^\frac{-t}{\e}\norm{p^i_0}+\e^{-1}e^\frac{-t}{\e} L\int_0^te^\frac{s}{\e}\norm{q^{i,\e}_s-\theta^i_*}ds\\
     &\overset{(\ref{lemma1}) }{\le} e^\frac{-t}{\e}\norm{p^i_0}+16 (L+2) L \e^{-1}e^\frac{-t}{\e} (\norm{q^i_0-\theta^i_*}+\e\norm{p^i_0})\int_0^te^\frac{s}{\e}\de s\\
     &\le  \Big(e^\frac{-t}{\e}+16(L+2)L\e\Big)\norm{p^i_0}+ 16 (L+2) L  \norm{q^i_0-\theta^i_*}.
\end{align*}
Let $C_L:=16(L+2)L + 1$.
Then we have $\norm{p^{i,\e}_t}\le C_L(\norm{p^i_0}+  \norm{q^i_0-\theta^i_*}).$
From \eqref{eq:AS:pqii}, we immediately get
\begin{align*}
    \norm{q^{i,\e}_t-q^{i,\e}_s}= \norm{\int_s^tp^{i,\e}_m\de m}
    \le \int_s^t\norm{p^{i,\e}_m}\de m\le  C_L(t-s)\Big(\norm{q^i_0-\theta^i_*}+\norm{p^i_0}\Big).
\end{align*}
\end{proof}

The third auxiliary result is a bound on the time derivative of the velocity  $(p_t^{i,m})_{t \geq 0}$, i.e.\  a bound on the particle's acceleration.
\begin{lemma}\label{lem:bound_Accel} {Let $(p^{i,\e}_t,q^{i,\e}_t)_{t \geq 0}$ be the solution to (\ref{eq:AS:pqii}). Let $\Phi_i$ satisfy Assumption \ref{asSGPf} and \ref{as1.2} with $\mathcal{A}^{\lambda_i, 1}$ and critical point $\theta^i_*$. Then for any $0<\e\le 1$, we have}
\begin{align}\label{lemma3}
    \norm{\frac{\de p^{i,\e}_t}{\de t}}\le C^{(0)}_L(1+\e^{-1}e^{-t/\e})\Big(\norm{q^i_0-\theta^i_*}+\norm{p^i_0}\Big)\ \ \ \ (t\ge 0),
\end{align}
where $ C^{(0)}_L=2+ 4LC_L$ {and $L$ is the Lipschitz constant of  $\nabla \Phi_i$.}
\end{lemma}
\begin{proof}
We first recall (\ref{pie}), where we have 
\begin{align}\label{pied}
   \frac{\de p^{i,\e}_t}{\de t}= \frac{-e^\frac{-t}{\e}p^i_0}{\e}-\frac{\nabla \Phi_i(q^{i,\e}_t)}{\e}+\frac{e^\frac{-t}{\e}}{\e^2}\int_0^te^\frac{s}{\e}\nabla \Phi_i(q^i_s)\de s.
\end{align}
and notice that
\begin{align}\label{nomore}
    \frac{\nabla \Phi_i(q^{i,\e}_t)}{\e}=\frac{e^{-\frac{t}{\e}}
}{\e^2}\int_0^t e^{\frac{s}{\e}}\nabla\Phi_i(q^{i,\e}_t)\de s+
\frac{e^{-\frac{t}{\e}}\nabla \Phi_i(q^{i,\e}_t)}{\e}.
\end{align}
Combining (\ref{pied}) and (\ref{nomore}), we get
\begin{align*}
    \norm{\frac{\de p^{i,\e}_t}{\de t}}
    &= e^\frac{-t}{\e}\norm{\frac{-p^i_0}{\e}-\frac{\nabla \Phi_i(q^{i,\e}_t)-\nabla \Phi_i(\theta^i_*)}{\e}+\frac{1}{\e^2}\int_0^te^\frac{s}{\e}(\nabla \Phi_i(q^i_s)-\nabla \Phi_i(q^{i,\e}_t))\de s}\\
    &\le e^\frac{-t}{\e}\Big[\frac{\norm{p^i_0}}{\e}+\frac{L\norm{q^{i,\e}_t-\theta^i_*}}{\e}+\frac{L}{\e^2}\int_0^te^\frac{s}{\e}\norm{q^{i,\e}_s-q^{i,\e}_t}\de s\Big]\\
    &= \e^{-1}e^\frac{-t}{\e}\Big(\norm{p^i_0}+L\norm{q^{i,\e}_t-\theta^i_*}\Big)+\frac{L}{\e^2}\int_0^te^\frac{-(t-s)}{\e}\norm{q^{i,\e}_s-q^{i,\e}_t}\de s\\
    &\le \e^{-1}e^\frac{-t}{\e}\Big(\norm{p^i_0}+L\norm{q^{i,\e}_t-\theta^i_*}\Big)+\frac{LC_L}{\e^2}\int_0^te^\frac{-(t-s)}{\e}(t-s)\de s\Big(\norm{q^i_0-\theta^i_*}+\norm{p^i_0}\Big)\\
    &\underbrace{\le}_{(a_1)} \e^{-1}e^\frac{-t}{\e}\Big(\norm{p^i_0}+4L (L+2) (\norm{q^i_0-\theta^i_*}+\norm{p^i_0})\Big)+LC_L\Big(\norm{q^i_0-\theta^i_*}+\norm{p^i_0}\Big)\\
    &\le  C^{(0)}_L(1+\e^{-1}e^\frac{-t}{\e})\Big(\norm{q^i_0-\theta^i_*}+\norm{p^i_0}\Big),
\end{align*}
{
where $(a_1)$ is correct due to $(\ref{lemma1})$ and the fact that $\int_0^{\frac{t}{\e}}e^{-s}s\de s<1.$  }
\end{proof}

The previous lemmas provide the boundedness results needed for proving the following proposition. We show that the difference between the fixed sample SGMP \eqref{eq:AS:pqii} and the fixed sample stochastic gradient process  \eqref{Eq_SGPC} can be bounded by the sum of the distance between their initial values and a term that is linear in $\e$. Hence, in the fixed subsample case, we show that the solution of (\ref{eq:AS:pqii}) converges to the solution of the limiting equation (\ref{Eq_SGPC}). In addition to the previous assumptions, we now also need to assume $\Phi_i$ to be convex.
\begin{proposition}\label{inductionlem} {Let $\Phi_i$ be convex and satisfy Assumption \ref{asSGPf} and \ref{as1.2} with $\mathcal{A}^{\lambda_i, 1}$ and critical point $\theta^i_*$. Let $(p^{i,\e}_t,q^{i,\e}_t)_{t \geq 0}$ be the solution to (\ref{eq:AS:pqii}).} Then, for $0< \e \le 1$, we have
\begin{align*}
\norm{q^{i,\e}_t-\theta^i_t}\le \norm{q^i_0-\theta^i_0}+ C^{(0)}_L\e (1+t) \Big(\norm{p^i_0}+\norm{q^i_0-\theta^i_0}+\norm{\theta^i_0-\theta^i_*}\Big)  \qquad (t \geq 0),
\end{align*}
{where $ C^{(0)}_L=2+ 4LC_L$ and $L$ is the Lipschitz constant of $\nabla \Phi_i$.}
\end{proposition}

\begin{proof}
 We take the derivative of $\norm{q^{i,\e}_t-\theta^i_t}^2$ and obtain
\begin{align*}
    \frac{1}{2}\frac{\de\norm{q^{i,\e}_t-\theta^i_t}^2}{\de t}&= \ip{q^{i,\e}_t-\theta^i_t}{\frac{\de q^{i,\e}_t}{\de t}-\frac{\de \theta^i_t}{\de t}}\\
    &=-\ip{q^{i,\e}_t-\theta^i_t}{\nabla \Phi_i(q^{i,\e}_t)-\nabla \Phi_i(\theta^i_t) }-\e\ip{q^{i,\e}_t-\theta^i_t}{\frac{\de p^{i,\e}_t}{\de t}}\\
    &\le -\e\ip{q^{i,\e}_t-\theta^i_t}{\frac{\de p^{i,\e}_t}{\de t}}\le \e\norm{q^{i,\e}_t-\theta^i_t}\norm{\frac{\de  p^{i,\e}_t}{\de  t}}\\
    &\overset{(\ref{lemma3})}{\le} C^{(0)}_L\e(1+\e^{-1}e^\frac{-t}{\e})\Big(\norm{q^i_0-\theta^i_*}+\norm{p^i_0}\Big)\norm{q^{i,\e}_t-\theta^i_t},
\end{align*}
which implies 
\begin{align*}
    \frac{\de \norm{q^{i,\e}_t-\theta^i_t}}{\de t}&=\Big(2\norm{q^{i,\e}_t-\theta^i_t}\Big)^{-1}\frac{\de \norm{q^{i,\e}_t-\theta^i_t}^2}{\de t}\\
    &\le C^{(0)}_L\e(1+\e^{-1}e^\frac{-t}{\e})\Big(\norm{q^i_0-\theta^i_*}+\norm{p^i_0}\Big).
\end{align*}
Integrating both sides, we get
\begin{align*}
    \norm{q^{i,\e}_t-\theta^i_t}&\le \norm{q^i_0-\theta^i_0}+ C^{(0)}_L \int_0^t (\e+ e^\frac{-s}{\e})\de s\Big(\norm{p^i_0}+\norm{q^i_0-\theta^i_*}\Big)\\
    &=\norm{q^i_0-\theta^i_0}+ C^{(0)}_L \Big(\e t+\e(1-e^{\frac{-t}{\e}}) \Big)\Big(\norm{p^i_0}+\norm{q^i_0-\theta^i_*}\Big)\\
    &\le \norm{q^i_0-\theta^i_0}+ C^{(0)}_L\e (1+t) \Big(\norm{p^i_0}+\norm{q^i_0-\theta^i_*}\Big)\\
    &\le \norm{q^i_0-\theta^i_0}+ C^{(0)}_L\e (1+t) \Big(\norm{p^i_0}+\norm{q^i_0-\theta^i_0}+\norm{\theta^i_0-\theta^i_*}\Big).
\end{align*}
\end{proof}

\section{Other auxiliary results} \label{appendix}

\begin{example}\label{example:assp2}
Let $\bar\Phi$ be a function on $\mathbb{R}$ satisfying
\begin{equation*}
\bar\Phi(x) = \left\{ \begin{array}{l}
 x^2, \qquad  \ \ \ \ \ \  x \in (-\infty, 1], \\
2x-1, \qquad  \  x \in (1,2],\\
\frac{1}{2}x^2+1, \qquad   x \in (2,+\infty).
\end{array} \right.
\end{equation*} 
$\bar\Phi$ satisfies Assumption~\ref{as1.2} with $\lambda \in (0, 2/(8 + \bar\alpha^2)]$. {As shown in Figure \ref{figure:example1},} $\bar\Phi$ is convex but not strongly convex since it is linear on $(1,2]$. 

\begin{figure}
    \centering
    \includegraphics[scale = 0.6]{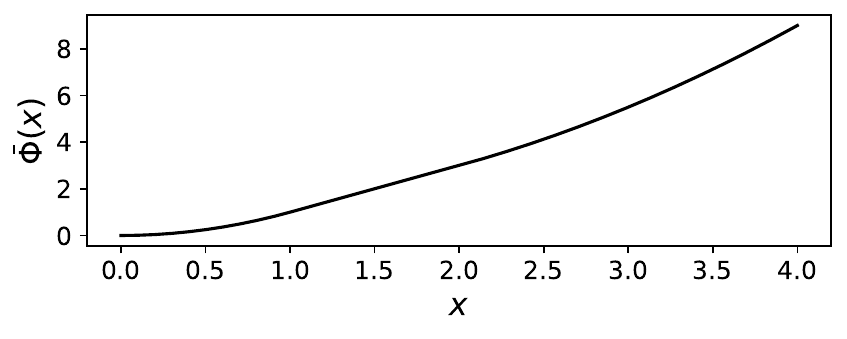}
    \vspace{-3mm}
    \caption{{Plot of function $\bar\Phi$ in Example \ref{example:assp2}.}}
    \label{figure:example1}
\end{figure}
\end{example}

\begin{lemma}\label{lem:conx}
Let $\bar\Phi\in \mathcal{C}^{1}(X,\R)$ be strongly convex with constant $\mu$. Then $\bar\Phi$ satisfies Assumption \ref{as1.2} with  $\mathcal{A}^{(\mu/\alpha^2)\land (1/4), \alpha}$.
\end{lemma}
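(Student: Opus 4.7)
The plan is to reduce the claim to a single standard consequence of strong convexity, and then pick the constant $\lambda$ by comparing two candidate bounds. Since $\theta_*$ is a global minimiser of $\bar\Phi$ and $\bar\Phi \in \mathcal{C}^1(X:\R)$, we have $\nabla\bar\Phi(\theta_*)=0$. The strong convexity inequality, applied at the pair $(x,\theta_*)$, gives
\begin{equation*}
\bar\Phi(\theta_*) \;\geq\; \bar\Phi(x) + \nabla\bar\Phi(x)\cdot(\theta_*-x) + \frac{\kappa}{2}\,\|x-\theta_*\|^2,
\end{equation*}
which I would rewrite as the working identity
\begin{equation*}
(x-\theta_*)\cdot \nabla\bar\Phi(x) \;\geq\; \bar\Phi(x)-\bar\Phi(\theta_*) + \frac{\kappa}{2}\,\|x-\theta_*\|^2. \qquad (\star)
\end{equation*}
Note that $(\star)$ also forces $\bar\Phi(x)-\bar\Phi(\theta_*)\geq 0$, which will be used below.

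With $\lambda := (\kappa/\alpha^2)\wedge(1/4) \in (0,1/4]$, the goal is to prove
\begin{equation*}
\frac{1}{2}\,(x-\theta_*)\cdot\nabla\bar\Phi(x) \;\geq\; \lambda\Big(\bar\Phi(x)-\bar\Phi(\theta_*) + \frac{\alpha^2}{4}\|x-\theta_*\|^2\Big),
\end{equation*}
or equivalently $(x-\theta_*)\cdot \nabla\bar\Phi(x) \geq 2\lambda\,(\bar\Phi(x)-\bar\Phi(\theta_*)) + \frac{\lambda\alpha^2}{2}\|x-\theta_*\|^2$. I would split according to which of the two candidates realises the minimum. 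If $\lambda = 1/4 \leq \kappa/\alpha^2$, then $\alpha^2/4 \leq \kappa$, so $\lambda \alpha^2/2 = \alpha^2/8 \leq \kappa/2$, and $2\lambda = 1/2 \leq 1$; both coefficients on the right are thus dominated by those on the right of $(\star)$, and the desired inequality follows. If instead $\lambda = \kappa/\alpha^2 < 1/4$, then $\lambda\alpha^2/2 = \kappa/2$, matching the coefficient of $\|x-\theta_*\|^2$ in $(\star)$ exactly, while $2\lambda = 2\kappa/\alpha^2 < 1/2 < 1$, so the term $2\lambda(\bar\Phi(x)-\bar\Phi(\theta_*))$ is bounded by $\bar\Phi(x)-\bar\Phi(\theta_*)$ (using nonnegativity of this quantity). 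Both cases therefore reduce to $(\star)$, and the proof is complete.

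There is no real obstacle here: the only slightly careful point is to make sure the coefficients in front of both $\bar\Phi(x)-\bar\Phi(\theta_*)$ and $\|x-\theta_*\|^2$ are simultaneously controlled by the two terms on the right of $(\star)$, which is exactly what the minimum $\lambda=(\kappa/\alpha^2)\wedge(1/4)$ is designed to ensure.
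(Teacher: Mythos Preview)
Your proof is correct and, in fact, slightly more direct than the paper's. The paper works from the monotonicity form of strong convexity, $\langle x-y,\nabla\bar\Phi(x)-\nabla\bar\Phi(y)\rangle\ge\kappa\|x-y\|^2$, which immediately gives the quadratic lower bound $(x-\theta_*)\cdot\nabla\bar\Phi(x)\ge\kappa\|x-\theta_*\|^2$ but not the function-value term; to recover $(x-\theta_*)\cdot\nabla\bar\Phi(x)\ge\bar\Phi(x)-\bar\Phi(\theta_*)$ the paper invokes the mean value theorem and a short monotonicity argument along the segment, then averages the two inequalities and compares coefficients. You instead use the first-order (gradient) characterisation of strong convexity, which packages both terms into the single inequality $(\star)$ in one step, so no mean value argument is needed. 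Since the paper defines strong convexity via the monotonicity inequality, your route implicitly uses the standard equivalence between the two characterisations for $\mathcal{C}^1$ functions; this is routine, but worth a one-line remark. Your case split is also unnecessary: from $\lambda\le 1/4$ and $\lambda\le\kappa/\alpha^2$ you get $2\lambda\le 1$ and $\lambda\alpha^2/2\le\kappa/2$ simultaneously, so both coefficients are dominated at once without splitting.
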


\begin{proof}
Since $\bar\Phi$ is strongly convex, we have 
$$
(x-\theta_*)\cdot (\nabla \bar\Phi(x)-\nabla \bar\Phi(\theta_*))\ge \mu \norm{x-\theta_*}^2.
$$
By Lagrange's mean value theorem, there exist some $\xi$ lying on the line between $x$ and $\theta_*$, such  that
$$
\bar\Phi(x)-\bar\Phi(\theta_*)= (x-\theta_*)\cdot\nabla \bar\Phi(\xi).
$$
Let $\xi= \theta_*+t(x-\theta_*)$ for some $0\le t\le 1.$ By strong convexity, we have
\begin{align*}
    (x-\xi)\cdot (\nabla \bar\Phi(x)-\nabla \bar\Phi(\xi))\ge \mu \norm{x-\xi}^2.
\end{align*}
Replacing $\xi$ by $\theta_*+t(x-\theta_*)$, we get
\begin{align*}
    (x-\theta_*)\cdot (\nabla \bar\Phi(x)-\nabla \bar\Phi(\xi))\ge(1-t)(x-\theta_*)\cdot (\nabla \bar\Phi(x)-\nabla \bar\Phi(\xi))\ge \mu \norm{x-\xi}^2.
\end{align*}
This implies 
\begin{align*}
     (x-\theta_*)\cdot \nabla \bar\Phi(x)\ge (x-\theta_*)\cdot\nabla \bar\Phi(\xi)+\mu \norm{x-\xi}^2\ge \bar\Phi(x)-\bar\Phi(\theta_*) .
\end{align*}
Hence,
\begin{align*}
    (x-\theta_*)\cdot \nabla \Phi(x)/2\ge  (\bar\Phi(x)-\bar\Phi(\theta_*))/4+\mu\norm{x-\theta_*}^2/4).
\end{align*}
Then, we choose $\lambda=(\mu/\alpha^2)\land (1/4)$ and the proof is completed.
\end{proof}

\begin{lemma}\label{thetabound}
We assume that $\Phi_i$ is convex and satisfies Assumption \ref{asSGPf} for all $i\in I.$ Let $(\theta^\e_t)_{t\geq0}$ be the solution to system (\ref{Eq_SGPCed}). Then we have
\begin{align*}
    \norm{\theta^\e_t}\le \norm{\theta_0}+C_\Phi t  \qquad (t \geq 0). 
\end{align*}
\end{lemma}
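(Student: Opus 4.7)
The plan is to establish the bound via an elementary energy argument on $\tfrac{1}{2}\|\theta^m_t\|^2$, exploiting convexity of each $\Phi_i$ evaluated at the origin. Differentiating along the flow (\ref{Eq_SGPCed}) gives $\tfrac{1}{2}\tfrac{d}{dt}\|\theta^m_t\|^2 = -\theta^m_t \cdot \nabla \Phi_{i(t/m^\delta)}(\theta^m_t)$, so the task reduces to a one-sided lower bound on $\theta \cdot \nabla \Phi_i(\theta)$ that is uniform in $\theta$ and $i$.

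Here I would apply the convexity inequality for $\Phi_i$ at the pair $(0,\theta^m_t)$,
\begin{equation*}
\Phi_i(0) \,\ge\, \Phi_i(\theta^m_t) + \nabla \Phi_i(\theta^m_t)\cdot(0-\theta^m_t),
\end{equation*}
which rearranges to $\theta^m_t \cdot \nabla \Phi_i(\theta^m_t) \ge \Phi_i(\theta^m_t) - \Phi_i(0) \ge \inf \Phi_i - \Phi_i(0)$. Setting $C_0 := \max_{i \in I}\bigl(\Phi_i(0) - \inf \Phi_i\bigr)$, which is finite and depends only on the family $\{\Phi_i\}_{i \in I}$, I then obtain $\tfrac{d}{dt}\|\theta^m_t\|^2 \le 2C_0$. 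Integrating over $[0,t]$ and using subadditivity of the square root yields
\begin{equation*}
\|\theta^m_t\| \,\le\, \sqrt{\|\theta_0\|^2 + 2C_0 t} \,\le\, \|\theta_0\| + \sqrt{2C_0 t}.
\end{equation*}

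The only delicate point is that the convexity estimate naturally produces a $\sqrt{t}$ rate, whereas the lemma claims a linear rate $\|\theta_0\| + C_\Phi t$. This is a cosmetic gap rather than an analytic obstacle: on any time horizon one has $\sqrt{2C_0 t} \le C_\Phi(1+t)$ for $C_\Phi := \max\{1,2C_0\}$, so absorbing a harmless additive constant (or alternatively splitting into $t \le 1$ and $t > 1$ and using the Lipschitz-gradient bound $\|\nabla \Phi_i(\theta^m_s)\| \le L(\|\theta^m_s - \theta^i_*\|)$ on the short initial interval) delivers the stated form. What matters for the applications in Theorem~\ref{second th} is that $C_\Phi$ is a deterministic constant depending only on the family $\{\Phi_i\}_{i \in I}$ and not on $m$, $\delta$, or the realisation of the index process $(i(s))_{s \ge 0}$; this is transparent from the construction, since the only quantities entering $C_0$ are $\Phi_i(0)$ and $\inf \Phi_i = \Phi_i(\theta^i_*)$.
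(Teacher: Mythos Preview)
Your energy argument is sound in spirit but differs from the paper's route in one consequential way, and it imports a hypothesis that the lemma does not state.

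The paper also differentiates $\|\theta^m_t\|^2$, but instead of the first-order convexity inequality it uses \emph{monotonicity of the gradient}: writing
\[
-\langle \theta^m_t,\nabla\Phi_j(\theta^m_t)\rangle
= -\langle \theta^m_t,\nabla\Phi_j(\theta^m_t)-\nabla\Phi_j(0)\rangle - \langle \theta^m_t,\nabla\Phi_j(0)\rangle,
\]
the first term is nonpositive by convexity and the second is at most $\|\theta^m_t\|\cdot\max_i\|\nabla\Phi_i(0)\|$. This gives $\tfrac{d}{dt}\|\theta^m_t\|^2 \le 2C_\Phi\|\theta^m_t\|$ with $C_\Phi:=\max_i\|\nabla\Phi_i(0)\|$, hence $\tfrac{d}{dt}\|\theta^m_t\|\le C_\Phi$, and the linear bound $\|\theta_0\|+C_\Phi t$ follows \emph{exactly}, with no additive constant to absorb and using only the stated hypotheses.

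Your route lower-bounds $\theta\cdot\nabla\Phi_i(\theta)$ by $\inf\Phi_i-\Phi_i(0)$, which requires each $\Phi_i$ to be bounded below. That is \emph{not} implied by ``convex with Lipschitz gradient'' (any nonzero linear functional is a counterexample), so as a proof of the lemma under its stated hypotheses your argument has a genuine gap; your suggested short-time fix via $\|\nabla\Phi_i(\theta)\|\le L\|\theta-\theta^i_*\|$ again presupposes the minimiser $\theta^i_*$. In the context of Theorem~\ref{second th} the minimisers do exist by the standing assumptions there, so the gap is harmless downstream, and your $\sqrt{t}$ bound is then actually sharper; you are also right that only a uniform bound on $[0,T]$ is ever used. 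But the clean way to match the lemma as written is the paper's monotonicity split, which avoids the boundedness-below issue entirely and delivers the stated linear form directly.
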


\begin{proof}
We differentiate $\norm{\theta^\e_t}^2$ and obtain
\begin{align*}
    \frac{\de \norm{\theta^\e_t}^2}{\de t}&=2\ip{\theta^\e_t}{\frac{\de \theta^\e_t}{\de t}}=-2\ip{\theta^\e_t}{\nabla \Phi_{i^{\e,\delta}(t)}(\theta^\e_t)}\\
    &=-2\ip{\theta^\e_t,\nabla \Phi_{i^{\e,\delta}(t)}(\theta^\e_t)-\nabla \Phi_{i^{\e,\delta}(t)}(0)}-2\ip{\theta^\e_t}{\nabla \Phi(0)}\\
    &\le 2\norm{\theta^\e_t}\norm{\nabla \Phi_{i^{\e,\delta}(t)}(0)}\le 2C_\Phi \norm{\theta^\e_t},
\end{align*}
which implies 
$\frac{\de \norm{\theta^\e_t}}{\de t}\le C_\Phi$ and, hence,
$\norm{\theta^\e_t}\le \norm{\theta_0}+C_\Phi t.$
\end{proof}

\begin{lemma}\label{strongthetabound}
We assume that $\Phi_i$ is strongly  convex with $\mu>0$ for all $i\in I.$ Let $(\theta_t)_{t\geq0}$ be the solution to (\ref{eq:SGP_old}). Then, we have
\begin{align*}
    \norm{\theta_t}\le \norm{\theta_0}e^{-\mu t}+C_\Phi \qquad (t \geq 0). 
\end{align*}
\end{lemma}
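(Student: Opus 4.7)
The plan is to adapt the argument from Lemma~\ref{thetabound} to exploit strong convexity and thereby obtain an exponentially contracting term in place of the linear growth. The starting point is the ODE $\mathrm{d}\theta_t/\mathrm{d}t = -\nabla \Phi_{i(t)}(\theta_t)$, which is piecewise deterministic with the same structure as before. Differentiating $\norm{\theta_t}^2$ yields
\begin{align*}
\frac{\mathrm{d}\norm{\theta_t}^2}{\mathrm{d}t} = -2\ip{\theta_t,\nabla \Phi_{i(t)}(\theta_t)} = -2\ip{\theta_t,\nabla \Phi_{i(t)}(\theta_t)-\nabla \Phi_{i(t)}(0)} -2\ip{\theta_t,\nabla \Phi_{i(t)}(0)}.
\end{align*}

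The first step is to apply the strong convexity inequality $\ip{x-y,\nabla \Phi_i(x)-\nabla \Phi_i(y)}\ge \kappa\norm{x-y}^2$ with $x=\theta_t$ and $y=0$, which gives the bound
\begin{align*}
\ip{\theta_t,\nabla \Phi_{i(t)}(\theta_t)-\nabla \Phi_{i(t)}(0)} \ge \kappa \norm{\theta_t}^2.
\end{align*}
Combined with Cauchy--Schwarz on the remaining term, and the uniform bound $\norm{\nabla \Phi_i(0)}\le C_\Phi$ for all $i\in I$ (the same $C_\Phi$ used in Lemma~\ref{thetabound}), I obtain
\begin{align*}
\frac{\mathrm{d}\norm{\theta_t}^2}{\mathrm{d}t} \le -2\kappa\norm{\theta_t}^2 + 2C_\Phi\norm{\theta_t}.
\end{align*}

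Dividing by $2\norm{\theta_t}$ (and handling the measure-zero set where $\norm{\theta_t}=0$ by the usual regularisation argument, or equivalently by noting that the right-hand side is nonpositive as soon as $\norm{\theta_t} \le C_\Phi/\kappa$) produces the scalar differential inequality
\begin{align*}
\frac{\mathrm{d}\norm{\theta_t}}{\mathrm{d}t} \le -\kappa\norm{\theta_t} + C_\Phi.
\end{align*}
Grönwall's inequality then yields $\norm{\theta_t} \le \norm{\theta_0}e^{-\kappa t} + C_\Phi(1-e^{-\kappa t})/\kappa$, which is bounded above by $\norm{\theta_0}e^{-\kappa t} + C_\Phi$ (after absorbing $1/\kappa$ into the constant $C_\Phi$, consistent with the statement).

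The only mild subtlety is the non-smoothness of $t\mapsto\norm{\theta_t}$ at zeros; this is handled by working with $\norm{\theta_t}^2+\varepsilon$ and sending $\varepsilon\downarrow 0$, or directly by noting that the bound is trivially satisfied whenever $\norm{\theta_t}$ is below the equilibrium value $C_\Phi/\kappa$. Since the switching of $i(t)$ does not affect these pointwise estimates (the inequality holds uniformly in the index), piecewise-deterministic jumps of the driving index process pose no obstacle, and the bound holds pathwise.
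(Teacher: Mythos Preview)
Your proof is correct and follows essentially the same approach as the paper: differentiate $\norm{\theta_t}^2$, split off $\nabla\Phi_{i(t)}(0)$, and use strong convexity on the difference term. The only minor technical difference is that the paper applies Young's inequality $2\norm{\theta_t}\norm{\nabla\Phi_{i(t)}(0)}\le \kappa\norm{\theta_t}^2+\kappa^{-1}\norm{\nabla\Phi_{i(t)}(0)}^2$ and runs Gr\"onwall on $\norm{\theta_t}^2$, whereas you divide through by $2\norm{\theta_t}$ and run Gr\"onwall on $\norm{\theta_t}$ directly; your route in fact delivers the stated exponent $e^{-\kappa t}$ more cleanly.
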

\begin{proof}
We differentiate $\norm{\theta_t}^2$,
\begin{align*}
    \frac{\de \norm{\theta_t}^2}{\de t}
    &\le -2\mu \norm{\theta_t}^2  +2\norm{\theta_t}\norm{\nabla \Phi_{i(t)}(0)}\le -\mu\norm{\theta_t}^2+\frac{1}{\mu}\norm{\nabla \Phi_{i(t)}(0)}^2,
\end{align*}
By Gr\"onwall's inequality, we have 
\begin{align*}
   \norm{\theta_t}^2\le e^{-\mu t}\norm{\theta_0}^2+\frac{1}{\mu^2}\norm{\nabla \Phi_{i(t)}(0)}^2
\end{align*}
and choose $C_\Phi= \frac{1}{\mu^2}\sup_{i=1,...,N}\norm{\nabla \Phi_i(0)}^2$.
\end{proof}

\begin{lemma}\label{tn}
Let $C$ and $\lambda$ be two positive constants and let $(t_n)_{n\ge 0}$  be  a sequence with $t_0\ge 0$ that satisfies the following  recurrence relation
\begin{align*}
    t_{n+1}=t_n+Ce^{-\lambda t_n}.
\end{align*}
Then $\lim_{n\to+\infty}t_n=+\infty.$
\end{lemma}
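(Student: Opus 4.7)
The plan is to argue by monotonicity and contradiction. First I would observe that since $C, \lambda > 0$, we have $Ce^{-\lambda t_n} > 0$ for every $n$, so the recurrence immediately gives $t_{n+1} > t_n$. Thus $\{t_n\}_{n \ge 0}$ is a strictly increasing sequence in $[0,\infty)$, and hence converges (possibly to $+\infty$) in the extended real line.

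Next I would suppose for contradiction that the limit $L := \lim_{n\to\infty} t_n$ is finite. By the monotone convergence theorem we would have $t_n \uparrow L$ with $L < \infty$. Taking the limit on both sides of the recurrence $t_{n+1} = t_n + Ce^{-\lambda t_n}$ and using the continuity of $x \mapsto Ce^{-\lambda x}$ would yield
\begin{equation*}
L = L + Ce^{-\lambda L},
\end{equation*}
hence $Ce^{-\lambda L} = 0$, which is impossible since $C > 0$ and $L$ is finite. This contradiction forces $L = +\infty$.

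There is essentially no hard part: the key observation is simply that any finite accumulation point of the sequence would have to be a fixed point of the map $x \mapsto x + Ce^{-\lambda x}$, and this map has no fixed points because its increment is strictly positive everywhere on $\mathbb{R}$. Consequently $t_n \to +\infty$, as claimed.
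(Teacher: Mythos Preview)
Your proof is correct and follows essentially the same approach as the paper: both note that $(t_n)$ is strictly increasing and then derive a contradiction from the assumption of a finite limit. The only cosmetic difference is that the paper carries out the contradiction via an explicit $\varepsilon$-argument (choosing $\varepsilon$ small compared to $Ce^{-\lambda A}$ so that $t_{n+1}$ exceeds the supremum $A$), whereas you pass to the limit directly using continuity of $x\mapsto Ce^{-\lambda x}$; the underlying idea is identical.
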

\begin{proof}
It is obvious that  $\{t_n\}_{n\ge 0}$ is a strictly increasing sequence. We now assume that $\lim_{n\to+\infty}t_n<+\infty.$ let $A$ be the smallest upper bound of $(t_n)_{n\ge 0}.$ Then for any $\e>0,$ there exist $n_0$ such that, for any $n\ge n_0, $ $t_n\ge A-\e$. We have
\begin{align*}
     t_{n+1}=t_n+Ce^{-\lambda t_n}\ge A-\e +Ce^{-A}.
\end{align*}
Set $\e\le Ce^{-A}/2$, then $t_{n+1}>A,$ contradicting the assumption that $t_{n} \leq A$ $(n \in \mathbb{N})$. Hence $\lim_{n\to+\infty}t_n=+\infty.$
\end{proof}

\begin{lemma}\label{lyap:witht} Let $(p^{i}_t,q^{i}_t)_{t \geq 0}$ solve (\ref{eq:AS:pqeiei}). Let $\theta_*^i$ be the critical point of $\Phi_i$ for $i\in I$. Under Assumption \ref{asSGPf}, \ref{as1.2} with $\mathcal{A}^{\lambda_i, 2}$, and \ref{as1.21} with $\lambda=\min_{i\in I}\lambda_i$, we have
\begin{align*}
    V^i(t,q^i_t,p^i_t)\le e^{-\lambda t}V^i(0,q^i_0,p^i_0) \qquad (t \geq 0),
\end{align*}
where $V^i(t,x,y)= \e(t)[\Phi_i(x)-\Phi_i(\theta^i_*)]+ \frac{1}{4}\Big(\norm{x-\theta^i_*+\e(t)y}^2+\norm{\e(t)y}^2\Big)$ is a Lyapunov function.
\end{lemma}
\begin{proof}
First, we differentiate $V^i(t,q^i_t,p^i_t)$ and obtain by (\ref{eq:AS:pqeiei}),
\begin{align}\label{eq:longlong}
    \de V^i(t,q^i_t,p^i_t)/\de t&= \de (\e(t)(\Phi_i(q^i_t)-\Phi_i(\theta_*)))/\de t+ \frac{1}{4}\de \Big(\norm{q^i_t-\theta^i_*+\e(t)p^i_t}^2+\norm{\e(t)p^i_t}^2\Big)/\de t\nonumber\\
    &= \e'(t)(\Phi_i(q^i_t)-\Phi_i(\theta^i_*))+\e(t)\ip{\nabla \Phi_i(q^i_t)}{\de q^i_t/\de t}\nonumber\\
    &\ + \frac{1}{2}\Big(\ip{q^i_t-\theta^i_*+\e(t)p^i_t}{\de (q^i_t-\theta^i_*+\e(t)p^i_t)/\de t}+\ip{\e(t)p^i_t}{\de (\e(t)p^i_t)/\de t}\Big)\nonumber\\
     &= \e'(t)(\Phi_i(q^i_t)-\Phi_i(\theta^i_*))+\e(t)\ip{\nabla \Phi_i(q^i_t)}{p^i_t}\nonumber\\
    &\ \ \ +\frac{1}{2}\Big(\ip{q^i_t-\theta^i_*+\e(t)p^i_t}{p^i_t+\e'(t)p^i_t+\e(t)\de p^i_t/\de t}\nonumber \\
    &\ \ \ \ \ \ \ \ +\ip{\e(t)p^i_t}{\e'(t)p^i_t+\e(t)\de p^i_t/\de t}\Big)\nonumber\\
    & \overset{(\ref{eq:AS:pqeiei})}{=} \e'(t)(\Phi_i(q^i_t)-\Phi_i(\theta^i_*))+\e(t)\ip{\nabla \Phi_i(q_t)}{p^i_t}\nonumber\\
    &\ \ \ +\frac{1}{2}\Big(\ip{q^i_t-\theta^i_*+\e(t)p^i_t}{\e'(t)p^i_t-\nabla \Phi_i(q^i_t)}\nonumber \\ 
    &\ \ \ \ \ \ \ \ +\ip{\e(t)p^i_t}{\e'(t)p^i_t-\nabla \Phi_i(q^i_t)-p^i_t}\Big)\nonumber\\
    &= \underbrace{\e'(t)(\Phi_i(q^i_t)-\Phi_i(\theta^i_*))}_{(\alpha_{11})}-\frac{1}{2}\ip{q^i_t-\theta^i_*}{\nabla \Phi_i(q^i_t)}\nonumber\\
    &\ \ \ \ +\e(t)(\e'(t)-\tfrac{1}{2})\norm{p^i_t}^2
    +\underbrace{\frac{\e'(t)}{2}\ip{q^i_t-\theta^i_*}{p^i_t}}_{(\alpha_{12})}.
    \end{align}
{Note that $(\alpha_{11})+m(t)\e'(t)\norm{p_t^i}^2\le 0$ since $\e'(t)\le 0$ and $\Phi_i(q^i_t)-\Phi_i(\theta^i_*)\ge 0$. By the $\varepsilon$-Young's inequality, we have $(\alpha_{12})\le \frac{\lambda}{4}\norm{q^i_t-\theta^i_*}^2+\frac{(\e'(t))^2}{4\lambda}\norm{p^i_t}^2.$ Then we have}
    \begin{align*}
    \eqref{eq:longlong}
    \le & -\frac{1}{2}\ip{q^i_t-\theta^i_*}{\nabla \Phi_i(q^i_t)}-\frac{\e(t)}{2}\norm{p^i_t}^2+\frac{\lambda}{4}\norm{q^i_t-\theta^i_*}^2+\frac{(\e'(t))^2}{4\lambda}\norm{p^i_t}^2\\
    \underbrace{\le}_{(\beta_1)} & -\lambda \Big(\Phi_i(q^i_t)-\Phi_i(\theta^i_*)+\norm{q^i_t-\theta^i_*}^2\Big)-\frac{\e(t)}{4}\norm{p^i_t}^2+\frac{\lambda}{4}\norm{q^i_t-\theta^i_*}^2\\
    \underbrace{\le}_{(\beta_2)} & -\lambda \Big(\Phi_i(q^i_t)-\Phi_i(\theta^i_*)+\frac{\norm{q^i_t-\theta^i_*+\e(t)p^i_t}^2}{4}+\frac{\norm{\e(t)p^i_t}^2}{4}\Big)+\frac{3\lambda\e^2(t)-\e(t)}{4}\norm{p^i_t}^2\\
    \le & -\lambda V^i(t,q^i_t,p^i_t).
\end{align*}
$(\beta_1)$ holds due to  Assumption \ref{as1.2} and \ref{as1.21}. In particular, Assumption \ref{as1.21} implies $\frac{(\e'(t))^2}{4\lambda}-\frac{\e(t)}{2}\le \frac{\lambda^2(\e(t))^2}{4\lambda}-\frac{\e(t)}{2}\le \frac{\lambda \e(t)}{4}-\frac{\e(t)}{2}\le -\frac{\e(t)}{4}.$ { Moreover, $(\beta_2)$ follows from $\frac{\norm{q^i_t-\theta^i_*+\e(t)p^i_t}^2}{4}\le \frac{\norm{q^i_t-\theta^i_*}^2+\norm{\e(t)p^i_t}^2}{2}$.}
Finally, by Gr\"onwall's inequality we have
$$
V^i(t,q^i_t,p^i_t)\le e^{-\lambda t}V^i(0,q^i_0,p^i_0).
$$
\end{proof}

\begin{lemma}\label{epsilong1}
Let $\e(t)>0$ be a strictly decreasing differentiable function and satisfy Assumption \ref{as1.21} with $\lambda=\mu/4$, where $\mu>0$. Then we have the following inequalities,
\begin{align*}
    \int_0^t\e^2(s)e^{\mu s}\de s\le 2\mu^{-1} e^{\mu t}\e^2(t), \qquad 
   \int_0^te^{-2\mathcal{E}(s)}e^{\mu s}\de s\le (1+\mu)\e_0 e^{\mu t}.
\end{align*}
\end{lemma}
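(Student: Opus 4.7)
The plan is to prove the two estimates independently, in each case combining the operative hypothesis $\lambda=\kappa/4$ with Assumption~\ref{as1.21} via a one-line Gr\"onwall/monotonicity argument.

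For the first estimate, the bound $|m'(s)|\le \lambda m(s)$ gives in particular $m'(s)+\lambda m(s)\ge 0$, so $s\mapsto m(s) e^{\lambda s}$ is non-decreasing. Evaluating at $s\le t$ yields the pointwise inequality $m(s)\le m(t) e^{\lambda(t-s)}$. Squaring and using $2\lambda=\kappa/2$, I would get
$$\int_0^t m^2(s) e^{\kappa s}\,ds \le m^2(t) e^{2\lambda t}\int_0^t e^{(\kappa-2\lambda)s}\,ds = m^2(t) e^{\kappa t/2}\cdot\frac{2}{\kappa}\bigl(e^{\kappa t/2}-1\bigr) \le 2\kappa^{-1} e^{\kappa t} m^2(t),$$
which is exactly the first claim.

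For the second estimate, I would instead exploit that $m$ is decreasing, so $m(s)\le m_0$ and hence $\mathcal{E}(s)\ge s/m_0$. This yields the pointwise bound $e^{-2\mathcal{E}(s)}\le e^{-2s/m_0}$, and a direct integration gives
$$\int_0^t e^{-2\mathcal{E}(s)} e^{\kappa s}\,ds \le \int_0^t e^{(\kappa-2/m_0)s}\,ds \le \frac{m_0}{2-\kappa m_0}$$
whenever $\kappa m_0<2$. The target bound $(1+\kappa) m_0 e^{\kappa t}$ then follows by multiplying by $e^{\kappa t}\ge 1$ and checking the elementary inequality $(1+\kappa)(2-\kappa m_0)\ge 1$, which holds in the regime $\kappa m_0\le 1$ guaranteed by $m_0\le 1$ together with the effective constraint $\kappa\le 1$ coming from $\lambda=\kappa/4\le 1/4$ in the application of Proposition~\ref{inductionlem2}.

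The main point to track is the matching of constants in the second inequality: the only analytic ingredient is the monotonicity of $m$, but one needs to verify that $m_0/(2-\kappa m_0)$ fits inside $(1+\kappa) m_0$, which reduces to the one-line check $\kappa^2\le 1+\kappa$ in the operative parameter range. No finer ODE machinery is required, and the structural parallel between the two monotonicity arguments---$m(s)\le m(t) e^{\lambda(t-s)}$ in part one, $m(s)\le m_0$ in part two---keeps the proof compact.
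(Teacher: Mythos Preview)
Your proof of the first inequality is correct and more direct than the paper's. The paper integrates by parts to get the same integral on both sides, uses $-2m'(s)m(s)\le(\kappa/2)m^2(s)$ to bound the cross term by half the original integral, and rearranges. You instead deduce from $m'+\lambda m\ge 0$ that $s\mapsto m(s)e^{\lambda s}$ is non-decreasing, obtain the pointwise comparison $m(s)\le m(t)e^{\lambda(t-s)}$, and integrate explicitly. Both reach the same constant; your route is shorter.

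For the second inequality your argument does not cover the full range of the lemma. The statement only requires Assumption~\ref{as1.21} with $\lambda=\kappa/4$, which permits any $\kappa\in(0,4]$ since Assumption~\ref{as1.21} allows $\lambda\le 1$. Your bound $\int_0^t e^{(\kappa-2/m_0)s}\,ds\le m_0/(2-\kappa m_0)$ already needs $\kappa m_0<2$, and the constant comparison $(1+\kappa)(2-\kappa m_0)\ge 1$ fails, for instance, at $\kappa=2$, $m_0=1$. The appeal to Proposition~\ref{inductionlem2} does not close this gap: there $\lambda=(\kappa/4)\wedge(1/4)$, so when $1<\kappa\le 4$ one still has $|m'|\le(1/4)m\le(\kappa/4)m$, hence the hypothesis of the present lemma with parameter $\kappa/4$ is in force and the lemma is genuinely invoked in that range. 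The paper sidesteps the issue by integrating by parts against $de^{-2\mathcal{E}(s)}=-\tfrac{2}{m(s)}e^{-2\mathcal{E}(s)}\,ds$, then bounding $e^{\kappa s}\le e^{\kappa t}$ \emph{before} using $e^{-2\mathcal{E}(s)}\le e^{-2s/m_0}$ in the residual integral $\int_0^t e^{-2\mathcal{E}(s)}\,ds\le m_0/2$. Because $e^{\kappa s}$ and $e^{-2\mathcal{E}(s)}$ are decoupled in that step, no competition between $\kappa$ and $2/m_0$ ever arises, and the resulting estimate $\tfrac12 m_0 + C\kappa m_0^2 e^{\kappa t}\le(1+\kappa)m_0 e^{\kappa t}$ holds for every $\kappa>0$ once $m_0\le 1$.
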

\begin{proof}
For the first inequality, we have
\begin{align*}
     \int_0^t\e^2(s)e^{\mu s}\de s= \mu^{-1}\int_0^t\e^2(s)\de e^{\mu s}= \mu^{-1}\Big[\e^2(t)e^{\mu t}-\e_0- 2\int_0^t\e(s)\e'(s)e^{\mu s}\de s\Big],
\end{align*}
using integration by parts.
Under the Assumption \ref{as1.21},  
\begin{align*}
    - 2\int_0^t\e(s)\e'(s)e^{\mu s}\de s \le \frac{\mu}{2}\int_0^t\e^2(s)e^{\mu s}\de s.
\end{align*}
Hence, 
\begin{align*}
    \int_0^t\e^2(s)e^{\mu s}\de s\le \mu^{-1} e^{\mu t}\e^2(t)+ \frac{1}{2}\int_0^t\e^2(s)e^{\mu s}\de s. 
\end{align*}
This implies that
$$
 \int_0^t\e^2(s)e^{\mu s}\de s\le 2\mu^{-1} e^{\mu t}\e^2(t).
$$
For the second inequality, we use integration by parts and notice that
\begin{align*}
    \int_0^te^{-2\mathcal{E}(s)}e^{\mu s}\de s&=- \frac{1}{2}\int_0^t\e(s)e^{\mu s}\de e^{-2\mathcal{E}(s)}\\
    &= \frac{1}{2}\Big(\e_0-\e(t)e^{\mu t}e^{-2\mathcal{E}(t)}\Big)+\frac{1}{2}\int_0^te^{-2\mathcal{E}(s)}e^{\mu s}(\mu \e(s)+\e'(s))\de s\\
    &\le \frac{\e_0}{2}+\frac{3\mu\e_0 e^{\mu t}}{8}\int_0^te^{-2\mathcal{E}(s)}\de s\\
    &\le \frac{\e_0}{2}+\frac{3\mu\e_0 e^{\mu t}}{8}\int_0^te^{\frac{-2s}{\e_0}}\de s\le  (1+\mu)\e_0 e^{\mu t}.
\end{align*}
This completes the proof.
\end{proof}

\begin{lemma}\label{lem:iter}
Let $(\e_n)_{n\ge 0}$ be a non-negative decreasing sequence with $\lim_{n\to \infty}\e_n=0.$ Let $(d_n)_{n\ge 0}$ be a positive sequence satisfying 
   $ d_{n+1}\le cd_n+ \e_n,$
where $0<c<1$. Then we have $\lim_{n\to \infty}d_n=0.$
Furthermore, if $\e_n\le B e^{-bn}$ for some constant $\tilde B,\tilde b>0$ we have $d_n\le \tilde B e^{-\tilde b n}.$
\end{lemma}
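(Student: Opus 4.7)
The plan is to iterate the recurrence directly. By straightforward induction one obtains
\begin{equation*}
d_{n+1}\le c^{\,n+1}d_0+\sum_{k=0}^{n}c^{\,n-k}\e_k.
\end{equation*}
The first term $c^{\,n+1}d_0$ tends to $0$ because $c\in(0,1)$, so the whole analysis reduces to controlling the convolution sum.

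For the first assertion, I would split the sum at $\lfloor n/2\rfloor$. On the tail $k>n/2$ use monotonicity $\e_k\le \e_{\lfloor n/2\rfloor}$ together with $\sum_{k=0}^{n}c^{\,n-k}\le (1-c)^{-1}$ to obtain a contribution bounded by $\e_{\lfloor n/2\rfloor}/(1-c)$, which vanishes as $n\to\infty$ since $\e_n\to 0$. On the head $k\le n/2$ use $c^{\,n-k}\le c^{\,n/2}$ and $\e_k\le \e_0$ to get a contribution bounded by $(\lfloor n/2\rfloor+1)\,c^{\,n/2}\e_0$, which also vanishes because the geometric factor dominates the linear one. Combining both halves yields $d_n\to 0$.

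For the exponential-decay claim, write $c=e^{-a}$ with $a=-\log c>0$, so that
\begin{equation*}
\sum_{k=0}^{n}c^{\,n-k}\e_k\;\le\; B\,e^{-an}\sum_{k=0}^{n}e^{(a-b)k}.
\end{equation*}
If $a\neq b$, the geometric sum equals $(e^{(a-b)(n+1)}-1)/(e^{a-b}-1)$, and after multiplying by $e^{-an}$ the expression is $O\!\bigl(e^{-\min(a,b)\,n}\bigr)$. If $a=b$, the sum equals $n+1$, producing $(n+1)e^{-an}$; this is the borderline case where one must sacrifice an arbitrarily small fraction of the rate, absorbing the polynomial prefactor into a decay $e^{-\tilde b n}$ for any $\tilde b<a$, at the cost of enlarging the constant. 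Taking $\tilde b\in (0,\min(a,b))$ strictly, the residual term $c^{\,n+1}d_0=d_0 e^{-a(n+1)}$ is also $O(e^{-\tilde b n})$, and choosing $\tilde B$ large enough to absorb all constants gives $d_n\le \tilde B e^{-\tilde b n}$, as required.

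The only mildly delicate point is the borderline case $a=b$: there the naive geometric bound fails and one must trade a bit of the rate for summability. Everything else is routine manipulation of the unrolled recurrence, and no new tools beyond monotonicity of $(\e_n)$ and $c<1$ are needed.
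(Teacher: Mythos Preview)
Your proof is correct and follows essentially the same approach as the paper. For the second part you and the paper do the identical computation (the paper writes the geometric sum in terms of $\log c$ rather than $a=-\log c$, and it too isolates the borderline case, bounding by $Bnc^n$ and absorbing the polynomial factor into a slightly smaller exponential rate). For the first part there is only a cosmetic difference: the paper fixes $\varepsilon>0$, chooses $N_0$ with $\e_n\le\varepsilon$ for $n\ge N_0$, and unrolls from $N_0$ to get $d_n\le c^{\,n-N_0}d_{N_0}+\varepsilon/(1-c)$, whereas you unroll all the way to $d_0$ and split the convolution sum at $\lfloor n/2\rfloor$; both implementations express the same idea of separating a geometrically decaying head from a tail controlled by the smallness of $\e_n$.
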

\begin{proof}
It is obvious that $(\e_n)_{n\ge 0}$ is bounded and we denote the upper bound by $C.$ Hence $(d_n)_{n\ge 0}$ can be bounded by $D:=d_0+C/(1-c).$
Since $\lim_{n\to+\infty}\e_n=0$, for any $\e>0,$ there exist a $N_0,$ for any $N\ge N_0,$ $\e_n\le \e.$  Hence, for any $n\ge N_0,$ 
we have 
\begin{align*}
    d_{n+1}\le cd_n+ \e_n\le cd_n+\e. 
\end{align*}
which implies that
\begin{align*}
    d_n\le c^{n-N_0}d_{N_0}+\frac{\e}{1-c}.
\end{align*}
Since $\e$ is arbitrary, we get $\lim_{n\to \infty}d_n=0,$ which finishes the proof of the first part.   

Now, assuming that $\e_n\le B e^{-bn}$,  the relation $ d_{n+1}\le cd_n+ \e_n$ implies that
\begin{align*}
    d_{n+1}\le \sum^n_{i=0}c^{n-i}\e_i\le  B\sum^n_{i=0}c^{n-i}e^{-bi}= Bc^n \sum^n_{i=0}e^{-(\log c +b)i}.
\end{align*}
If $b+\log c\ge 0,$  $d_{n+1}\le Bnc^n$ the result is obvious. For  $b+\log c< 0,$ we have 
\begin{align*}
     d_{n+1}\le Bc^n \frac{e^{-(\log c +b)(n+1)}-1}{e^{-(\log c +b)}-1}\le Bc^n \frac{e^{-(\log c +b)(n+1)}}{e^{-(\log c +b)}-1}\le \tilde B e^{-b(n+1)}.
\end{align*}
\end{proof}

\begin{lemma}\label{twoseq1}
Let $G^n_k= \prod^n_{i=k}\Big(1-\frac{b}{\sqrt{i}}\Big)$, where $0<b<1$. Then there exist $c>0$ such that  for any $0\le k<n$, we have 
$
G^n_k\le e^{-c(\sqrt{n}-\sqrt{k})}.
$
\end{lemma}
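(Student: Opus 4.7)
The plan is to take logarithms and convert the product into a sum that can be compared with an integral. Since $0<b<1$, for every $i\ge 1$ we have $0<b/\sqrt{i}\le b<1$, so each factor $1-b/\sqrt{i}$ is strictly positive and the logarithm is well-defined (the case $k=0$ can be reduced to $k=1$ by noting that the $i=0$ term, if it appears, would be $1-b/\sqrt{0}$, which is handled by convention or by starting the product at $i=1$).

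First, I would apply the elementary inequality $\log(1-x)\le -x$, valid for $x\in[0,1)$, to each factor, obtaining
\[
\log G^n_k \;=\; \sum_{i=k}^n \log\!\left(1-\frac{b}{\sqrt{i}}\right) \;\le\; -\,b\sum_{i=k}^n \frac{1}{\sqrt{i}}.
\]
Next, I would bound the right-hand sum from below by an integral. Since $x\mapsto 1/\sqrt{x}$ is decreasing,
\[
\sum_{i=k}^n \frac{1}{\sqrt{i}} \;\ge\; \int_k^{n+1}\frac{dx}{\sqrt{x}} \;=\; 2\bigl(\sqrt{n+1}-\sqrt{k}\bigr) \;\ge\; 2\bigl(\sqrt{n}-\sqrt{k}\bigr).
\]
Combining the two displays yields $\log G^n_k \le -2b(\sqrt{n}-\sqrt{k})$, and exponentiating gives the claim with the explicit constant $c:=2b>0$.

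There is no real obstacle here; the only delicate point is to make sure the logarithm step is legitimate. If $k=0$ is allowed in the original statement, one handles the $i=0$ index separately (either by convention, by starting the product at $i=1$, or by absorbing the missing factor into a constant), but the asymptotic bound $G^n_k\le e^{-c(\sqrt{n}-\sqrt{k})}$ is unaffected. In particular, the argument does not require any of the deeper structure of the paper and does not need $b$ to be close to $1$: any $b\in(0,1)$ gives the claimed exponential decay with rate $2b$.
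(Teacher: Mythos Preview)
Your argument is correct and follows essentially the same route as the paper: take logarithms, use $\log(1-x)\le -x$, and compare the resulting sum $\sum_{i=k}^n 1/\sqrt{i}$ with the integral $\int_k^{n+1} x^{-1/2}\,dx$. Your computation of the integral is in fact cleaner than the paper's (which contains a harmless arithmetic slip in the constant), and your remark about the $k=0$ edge case is a useful addition, though in the paper the lemma is only invoked for large $k$.
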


\begin{proof}
For any $0\le x<1$ we have $\log(1-x)\le -x.$ This implies that
\begin{align*}
    \log G^n_k=& \log\Big( \prod^n_{i=k}\Big(1-\frac{b}{\sqrt{i}}\Big)\Big) =\sum_{i=k}^n \log\Big(1-\frac{b}{\sqrt{i}}\Big)\le -b\sum_{i=k}^n \frac{1}{\sqrt{i}}\\
    =& -b\sum_{i=k}^n \frac{1}{\sqrt{i}}\int^{i+1}_i\de x\le -b\sum_{i=k}^n \int^{i+1}_i \frac{1}{\sqrt{x}}\de x
    = -b \int^{n+1}_{k} \frac{1}{\sqrt{x}}\de x\\
    =& \frac{-b(\sqrt{n+1}-\sqrt{k})}{2}\le \frac{-b(\sqrt{n}-\sqrt{k})}{2}.
\end{align*}
Finally, by taking exponential both side and let $ c=-b/2$, we finish the proof.
\end{proof}


\section{Kushner's Perturbed Test Function Theory}\label{appendix:kushner}

In \citet[Chapter 7]{Kushner1}, the author considers a dynamical system with state space $\mathbb{R}^d$ of the form
\begin{align}\label{dy: kushner}
\frac{\mathrm{d}x}{\mathrm{d}t}^\gamma = \bar G(x^\gamma) +\tilde G(x^\gamma,\xi^\gamma)+F(x^\gamma,\xi^\gamma)/\gamma,
\end{align}
where $\gamma >0$ is a parameter and $(\xi^\gamma_t)_{t \geq 0}$ is a stochastic process. The author then studies the limiting behaviour of this dynamical system to, e.g.\ ODEs of type
 $\frac{\mathrm{d}x}{\mathrm{d}t} = \bar G(x)$. We now cite their main result.
\begin{theorem}[\cite{Kushner1}] \label{kushner}
We assume that assumptions (A4.2) to (A4.6) (see below) hold and let $(x^\gamma(0))_{\gamma >0}$ be tight. Then $(x^\gamma(t))_{t\geq0}$ is tight with respect to $\gamma >0$ and the limit of any weakly convergent   subsequence solves the martingale problem with the associate operator $A$ defined by 
\begin{align*}
    Af(x)=\nabla_x   f\cdot \bar G(x)+B_0f(x),
\end{align*}
for appropriate test functions $f: \mathbb{R}^d \rightarrow \mathbb{R}$.
\end{theorem}
We apply this theorem only in case $F = 0$, which simplifies the discussion in the following. For instance, we skip the definition of $B_0$, as $B_0 = 0$. Then, the operator $A$ is the infinitesimal generator of $\frac{\mathrm{d}x}{\mathrm{d}t} = \bar G(x)$ and we obtain convergence. We now list Assumptions (A4.2), (A4.3), and (A4.5). We skip Assumptions (A4.4) and (A4.6) as they are redundant if $F = 0$. 

\begin{description}
    \item[(A4.2):] $\bar{G},\ \tilde{G}, \ F,\ \nabla_xF$ are continuous. 
    \item[(A4.3):] The process $(\xi_t^\gamma)_{t \geq 0}$ satisfies $\xi_{t}^\gamma = \xi_{t/\gamma^2}$ $(t \geq 0)$ for a bounded and right continuous $(\xi_t)_{t \geq 0}$.
    \item[(A4.5):] For each $x\in \R^d,$ as $t,\tau\to \infty$,
\begin{align*}
    \frac{1}{\tau}\int_t^{t+\tau} \E[\tilde G(x,\xi(s))|(\xi(s'))_{s'\le t}] \de s \to 0,
    \end{align*}
    almost surely.
\end{description}

We employ Theorem~\ref{kushner} in the proof of Theorem~\ref{wcovpq}. There, we have $\xi_t=i(t)$, $\gamma=\nu^{1/2}, $ $\bar{G}(x,y) = (y,-\nabla_x \bar\Phi(x)-\alpha y)$, $\tilde G(x,y,i) = (0,- \nabla_x \Phi_i(x) + \nabla_x \bar\Phi(x))$ and $F=0$.

\bibliography{sample}

\end{document}